\documentclass[smallextended]{svjour3}[]

\usepackage[%
  font=small,
  labelfont=bf,
  tableposition=top
]{caption}
\captionsetup[table]{labelsep=space}
\captionsetup[figure]{labelsep=space}	
\usepackage{array}
\usepackage{mdwmath}
\usepackage{mdwtab}
\usepackage{subfloat}
\usepackage{eqparbox}
\usepackage{color}
\usepackage{graphicx}
\usepackage{slashbox}
\usepackage{subfig}
\usepackage{multirow}
\usepackage{amsmath, amssymb}
\usepackage{verbatim}
\usepackage{url}

\usepackage{booktabs}
\usepackage{algorithm,algorithmic}

\newcommand{\ignore}[1]{}
\hyphenation{op-tical net-works semi-conduc-tor}
\DeclareMathOperator*{\argmin}{arg\,min}


\def\norm2#1{\|#1\|_2}


\def\mplus{\mathrel{%
  \ooalign{\raise.29ex\hbox{$\scriptscriptstyle\mathbf{+}$}\cr}}}
  
 \def\mminus{\mathrel{%
  \ooalign{\raise.29ex\hbox{$\scriptscriptstyle\mathbf{-}$}\cr}}}

\begin{document}
\sloppy
%
\title{A Second-Order Method for Strongly Convex $\ell_1$-Regularization Problems}

\author{Kimon Fountoulakis and Jacek Gondzio\thanks{J. Gondzio is supported by EPSRC Grant EP/I017127/1}}

\institute{ 
Kimon Fountoulakis
  \at School of Mathematics and Maxwell Institute,
      The University of Edinburgh,
      Peter Guthrie Tait Road, Edinburgh EH9 3FD,
      United Kingdom.
      \\\email{K.Fountoulakis@sms.ed.ac.uk}
      \\Tel.: +44 131 650 5083 
 \and 
Jacek Gondzio
  \at School of Mathematics and Maxwell Institute,
      The University of Edinburgh,
      Peter Guthrie Tait Road, Edinburgh EH9 3FD,
      United Kingdom.
      \\\email{J.Gondzio@ed.ac.uk}
      \\Tel.: +44 131 650 8574, Fax: +44 131 650 6553
}

\maketitle

\begin{abstract}
In this paper a robust second-order method is developed for the solution of strongly convex $\ell_1$-regularized problems. 
The main aim is to make the proposed method as inexpensive as possible, while even difficult problems can be efficiently solved.
The proposed approach is a primal-dual Newton Conjugate Gradients (pdNCG) method. 
Convergence properties of pdNCG are studied and worst-case iteration complexity is established. 
Numerical results are presented on synthetic sparse least-squares problems and real world machine learning problems.

\keywords{$\ell_1$-regularization $\cdot$ Strongly convex optimization $\cdot$ Second-order methods $\cdot$ Iteration Complexity $\cdot$ Newton Conjugate-Gradients method}

\subclass{68W40, 65K05, 90C06, 90C25, 90C30, 90C51 }

\end{abstract}

\section{Introduction}
We are concerned with the solution of the following optimization problem
\begin{equation}\label{prob1}
\mbox{minimize} \  f_{\tau}(x) := \tau \|x\|_1 + \varphi(x),
\end{equation}
where $x\in\mathbb{R}^m$, $\tau > 0$ and $\|\cdot\|_1$ is the $\ell_1$-norm. The following three assumptions are made.
\begin{itemize}
\item The function $\varphi(x)$ is twice differentiable, and 
\item strongly convex everywhere, which implies that at any $x$ its second derivative $\nabla^2 \varphi(x)$ is uniformly bounded
\begin{equation}\label{bd15}
\lambda_m I_m \preceq \nabla^2 \varphi(x) \preceq \lambda_1 I_m ,
\end{equation}
with $0 < \lambda_m \le \lambda_1$, where $I_m$ is the $m\times m$ identity matrix.
\item The second derivative of $\varphi(x)$ is Lipschitz continuous
\begin{equation}\label{bd44}
\|\nabla^2 \varphi(y) - \nabla^2 \varphi(x)\| \le L_\varphi \|y-x\|,
\end{equation}
for any $x, y$, where $L_\varphi\ge0$ is the Lipschitz constant, $\|\cdot\|$ represents the Euclidean distance for vectors
and the spectral norm for matrices.
\end{itemize}

A variety of problems originating from the ``new" economy including Big-Data \cite{peterbigdata},
Machine Learning \cite{machineOpti} and Regression \cite{IEEEhowto:Tibshirani} problems to mention a few can be
cast in the form of \eqref{prob1}. 
Such problems usually consist of large-scale data, which frequently impose restrictions on methods that have been so far employed. 
For instance, the new methods have to be memory efficient and ideally, within seconds they should offer noticeable progress in reducing 
the objective function. First-order methods meet some of these requirements. 
They avoid matrix factorizations, which implies low memory requirements, additionally, 
they sometimes offer fast progress in the initial stages of optimization. 
Unfortunately, as demonstrated by numerical experiments presented later in this paper, first-order methods miss essential information about the conditioning of the problems, which might result in slow practical convergence.
The main advantage of first-order methods, which is to rely only on simple gradient or coordinate updates becomes their essential weakness.

We do not think this inherent weakness of first-order methods can be remedied.
For this reason, in this paper, a second-order method is used instead, i.e.,, a primal-dual Newton Conjugate Gradients. The optimization community seems to consider the second-order methods to be rather expensive.
The main aim in this paper is to make the proposed method as \textit{inexpensive} as possible, while even complicated problems can be efficiently solved. 
To accomplish this, pdNCG is used in a matrix-free environment i.e.,, Conjugate Gradients is used to compute inexact Newton directions.
No matrix factorization is performed and no excessive memory requirements are needed. Consequently, the main drawbacks of Newton method are removed, while
at the same time their fast convergence properties are provably retained.
In order to meet this goal, the $\ell_1$-norm is approximated by a smooth function, which has derivatives of all degrees. Hence, problem \eqref{prob1} is replaced by
\begin{equation}\label{prob2}
 \mbox{minimize} \  f_{\tau}^{\mu}(x) := \tau \psi_{\mu}(x) + \varphi(x).
\end{equation}
where $\psi_{\mu}(x)$ denotes the smooth function, which substitutes the $\ell_1$-norm and $\mu$ is a parameter that controls the quality of approximation.
Smoothing will allow access to second-order information and essential curvature information will be exploited. 

On the theoretical front we show that the analysis of pdNCG can be performed in a variable metric using an important property of CG.
The variable metric is the standard Euclidean norm scaled by an approximation of the second-order derivative at every iteration of pdNCG. Based on the variable metric 
we give a complete analysis of pdNCG, i.e., proof of global convergence, 
global and local convergence rates, local region of fast convergence rate and worst-case iteration complexity.

In what follows in this section we give a brief introduction of the smoothing technique. In Section \ref{sec2}, necessary basic results 
are given, which will be used to support theoretical results in Section \ref{sec:ncg}. In Section \ref{sec:pdnewt}, the proposed pdNCG method is described in details. 
In Section \ref{sec:ncg}, the convergence analysis and worst-case iteration complexity of pdNCG is studied. 
In Section \ref{sec:nexp}, numerical results are presented. 

\subsection{Pseudo-Huber regularization}
The non-smoothness of the $\ell_1$-norm prevents a straightforward application of the second-order method to problem \eqref{prob1}.
In this subsection, we focus
on approximating the non-smooth $\ell_1$-norm by a smooth function.
To meet such a goal, the first-order methods community replaces 
the $\ell_1$-norm with the so-called Huber penalty function $\sum_{i=1}^m \phi_\mu(x_i)$  \cite{IEEEhowto:Nesta},
where
\begin{equation*}
  \phi_\mu(x_i) = \left\{ 
  \begin{array}{l l}
    \frac{1}{2}\frac{x_i^2}{\mu}, & \quad \text{if $|x_i|\le \mu$}\\
    |x_i| - \frac{1}{2}\mu, & \quad \text{if $|x_i|\ge \mu$}
  \end{array} \right.
  \ \ i=1,2,\ldots ,m
\end{equation*}
and $\mu>0$.
The smaller the parameter $\mu$ of the Huber function is, the better the function approximates the $\ell_1$-norm.
Observe that the Huber function is only first-order differentiable, therefore, this approximation trick is not applicable to second-order methods. Fortunately, there is a smooth
version of the Huber function, the pseudo-Huber function, which has derivatives of all degrees \cite{Hartley2004}. 
The pseudo-Huber function parameterized with $\mu>0$ is
\begin{equation}\label{pseudoHuber}
 \psi_\mu(x) = \sum_{i=1}^m \Big((\mu^2+{x_i^2})^{\frac{1}{2}} - \mu\Big).
\end{equation}
A comparison of the three functions $\ell_1$-norm, Huber and Pseudo-Huber function can be seen in Figure \ref{fig1}.
\begin{figure}%
\centering
\subfloat[$\ell_1$-norm, Huber, Pseudo-Huber functions]{%
\label{fig1a}%
\includegraphics[width=59mm,height=50mm]{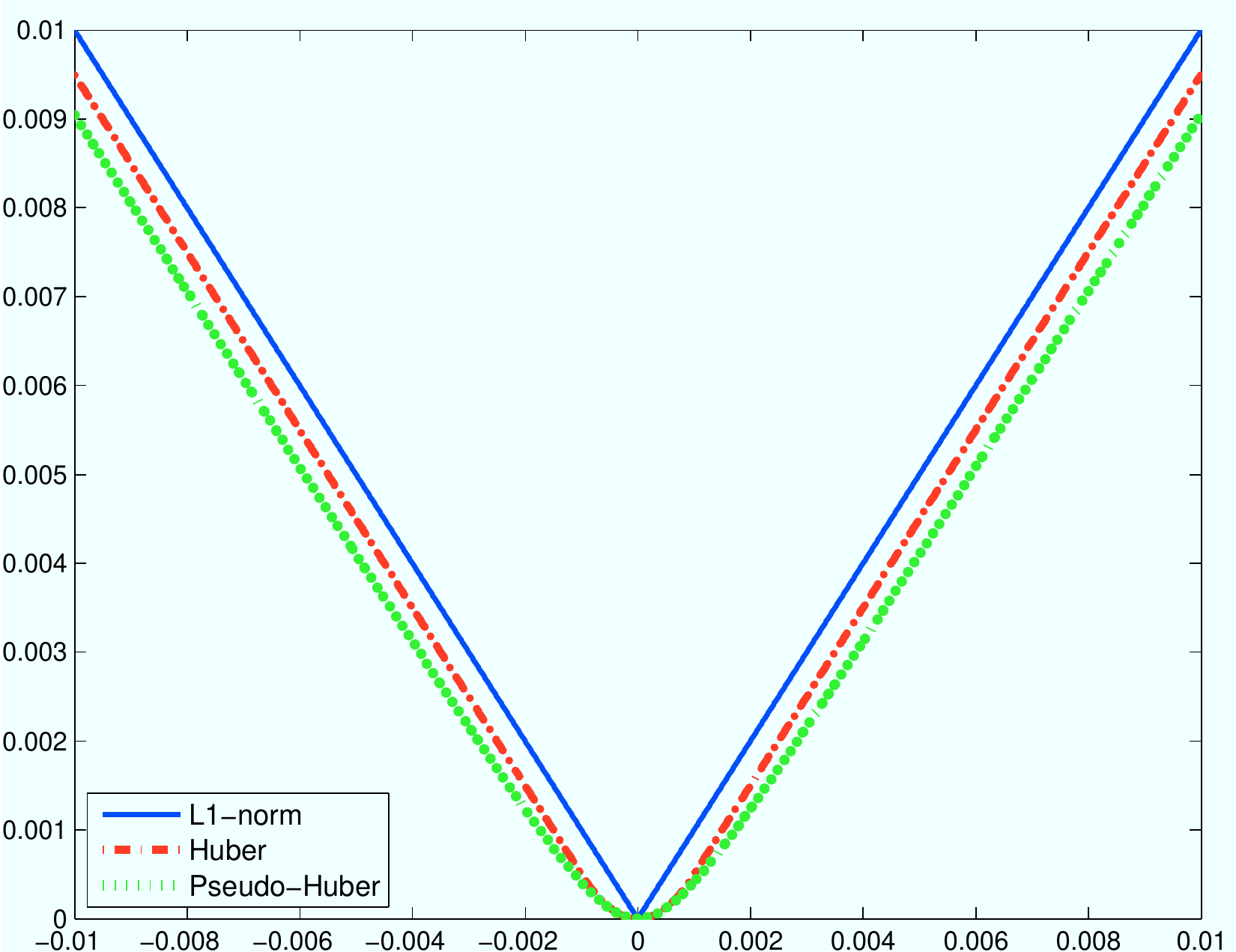}}
\subfloat[Pseudo-Huber function for $\mu\to 0$ ]{%
\label{fig2a}%
\includegraphics[width=59mm,height=50mm]{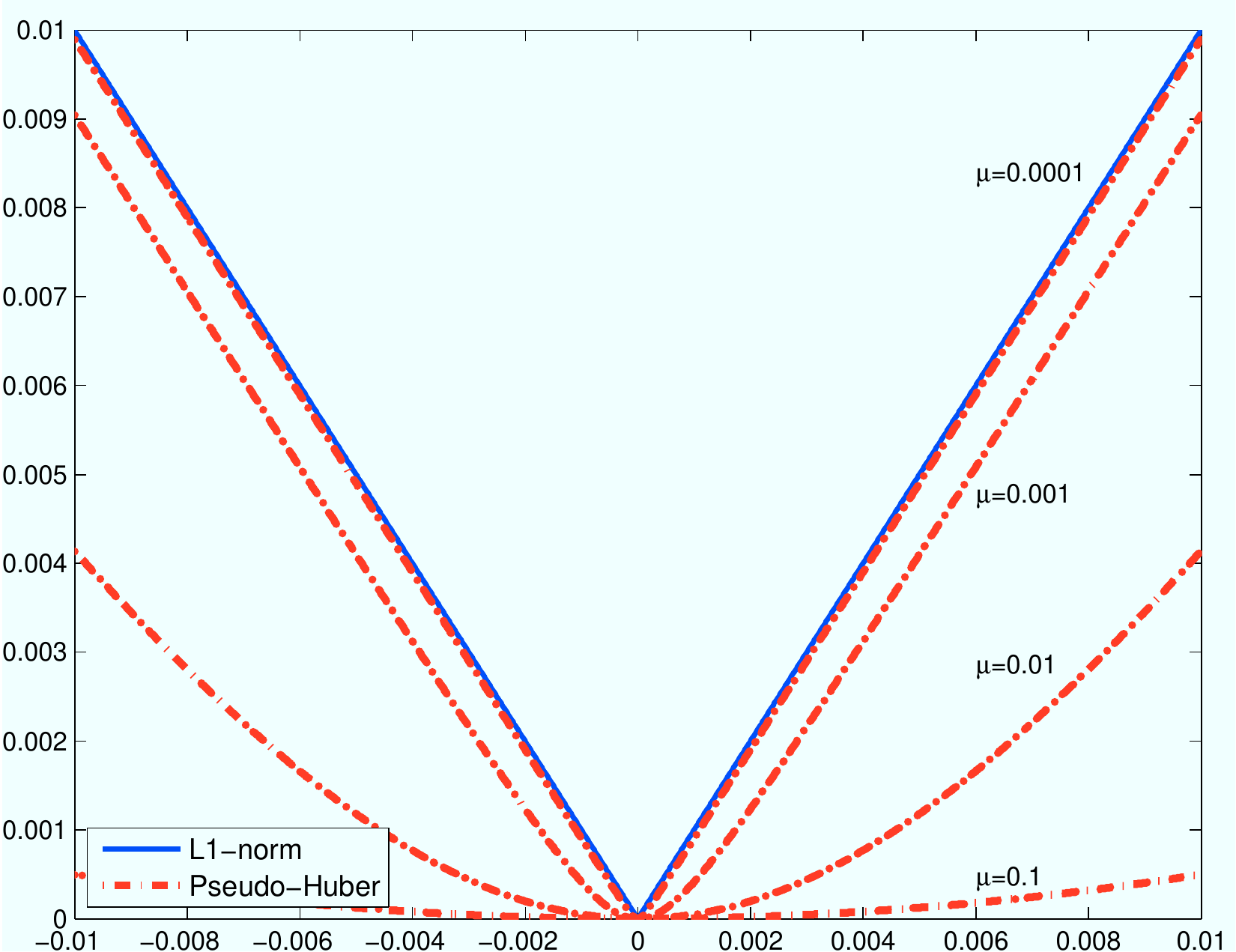}}\\
\caption{Comparison of the approximation functions, Huber and pseudo-Huber, with the $\ell_1$-norm in one dimensional space. \textbf{Fig.1a} shows
the quality of approximation for the Huber and pseudo-Huber functions. \textbf{Fig.1b} shows how pseudo-Huber function converges to the 
$\ell_1$-norm as $\mu\to 0$}
\label{fig1}%
\end{figure}

The advantages of such an approach are listed below.
\begin{itemize}
  \item Availability of second-order information owed to the differentiability of the pseudo-Huber function.
  \item Opening the door to using iterative methods to compute descent directions, which take into account the curvature of the problem, such as CG.
  
\end{itemize}
There is an obvious cost that comes along with the above benefits, and that is the approximate nature of the pseudo-Huber function.
There is a concern that in case that a very accurate solution is required, the pseudo-Huber function may be unable to deliver it. In theory, since the
quality of the approximation is controlled by parameter $\mu$ in (\ref{pseudoHuber}), see Figure \ref{fig1}, the pseudo-Huber function can recover any level of accuracy
under the condition that sufficiently small $\mu$ is chosen. The reader is referred to \cite{pertanal} for a perturbation analysis when the $\ell_1$-norm is replaced with the Pseudo-Huber function. 
In practise a very small parameter $\mu$ might worsen the conditioning of the linear algebra of the solver. However, we shall provide numerical evidence that even when $\mu$ is set to small values, 
the proposed method behaves well and remains very efficient.


\section{Preliminaries}\label{sec2}
The $\|\cdot\|_\infty$ denotes the infinity norm.
The operator $diag(\cdot)$ takes as input a vector and creates a diagonal matrix with the input vector on the diagonal. The operator $[\cdot]_{ij}$ returns the element at row $i$ and column $j$ of the input matrix, similarly, the operator $[\cdot]_i$ returns the element at position $i$ of the input vector.

\subsection{Properties of pseudo-Huber function}
The gradient of the pseudo-Huber function $\psi_\mu(x)$ in (\ref{pseudoHuber}) is given by
\begin{equation}\label{gradpsi}
  \nabla \psi_\mu(x) = \Big[x_1\Big(\mu^2+ {x_1^2}\Big)^{-\frac{1}{2}},\ldots ,x_m\Big(\mu^2+ {x_m^2}\Big)^{-\frac{1}{2}}\Big],
\end{equation}
and the Hessian is given by
\begin{equation}\label{hespsi}
  \nabla^2 \psi_\mu(x) = \mu^2diag\Big(\Big[(\mu^2 + {x_1^2} )^{-\frac{3}{2}},\ldots ,(\mu^2 + {x_m^2} )^{-\frac{3}{2}} \Big]\Big).
\end{equation}
The next lemma guarantees that the Hessian of the pseudo-Huber function $\psi_\mu(x)$ is bounded.
\begin{lemma}\label{lem:4}
The Hessian matrix $\nabla^2 \psi_\mu(x)$ satisfies
\begin{equation*}
 0I \prec \nabla^2 \psi_\mu(x) \preceq \frac{1}{\mu}I
\end{equation*}
where $I$ is the identity matrix in appropriate dimension.
\end{lemma}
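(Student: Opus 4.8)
The plan is to exploit the fact that the Hessian in~\eqref{hespsi} is \emph{diagonal}. For a diagonal matrix the L\"owner ordering relations $0I \prec \nabla^2\psi_\mu(x)$ and $\nabla^2\psi_\mu(x) \preceq \tfrac{1}{\mu}I$ decouple into scalar inequalities on each diagonal entry; there are no off-diagonal interactions to control and the spectrum is read off directly. Writing $d_i(x_i) := \mu^2(\mu^2 + x_i^2)^{-3/2}$ for the $i$-th diagonal entry, it therefore suffices to establish
\begin{equation*}
0 < d_i(x_i) \le \frac{1}{\mu} \quad \text{for every } x_i \in \mathbb{R} \text{ and } i = 1,\ldots,m.
\end{equation*}

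For the strict lower bound I would simply note that $\mu > 0$ forces $\mu^2 + x_i^2 \ge \mu^2 > 0$, so the base is positive and raising it to the power $-3/2$ keeps it positive; multiplying by $\mu^2 > 0$ gives $d_i(x_i) > 0$. This yields $\nabla^2\psi_\mu(x) \succ 0$ without any appeal to the value of $x_i$. For the upper bound the idea is that $d_i$ is a decreasing function of $x_i^2$, so it attains its maximum where $x_i^2$ is smallest, namely at $x_i = 0$. Evaluating there gives $d_i(0) = \mu^2(\mu^2)^{-3/2} = \mu^2 \mu^{-3} = \mu^{-1}$, while for any other $x_i$ the larger denominator only decreases the value. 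Hence $d_i(x_i) \le 1/\mu$ uniformly, which translates into $\nabla^2\psi_\mu(x) \preceq \tfrac{1}{\mu}I$.

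Honestly, there is no real obstacle here: the result is an immediate consequence of the closed form~\eqref{hespsi} together with diagonality. The only point deserving a word of care is the upper bound, where one should justify that the maximum of $d_i$ over $x_i \in \mathbb{R}$ is indeed achieved at $x_i = 0$; this is clear from monotonicity in $x_i^2$, and it also shows the bound $1/\mu$ is tight rather than merely an overestimate. I would present the argument entrywise and then invoke the standard fact that for symmetric matrices the bound $A \preceq cI$ is equivalent to all eigenvalues being at most $c$, which for diagonal $A$ are precisely the $d_i(x_i)$.
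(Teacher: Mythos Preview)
Your proof is correct and follows the same approach as the paper: since $\nabla^2\psi_\mu(x)$ is diagonal, one simply bounds each diagonal entry by $0 < \mu^2(\mu^2+x_i^2)^{-3/2} \le 1/\mu$ (equivalently $0 < (\mu^2+x_i^2)^{-3/2} \le \mu^{-3}$). The paper's own proof is a one-line observation to this effect; your version just makes the monotonicity in $x_i^2$ and the tightness at $x_i=0$ explicit.
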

\begin{proof}
The result follows easily by observing that $0 < (\mu^2+ {x_i^2} )^{-\frac{3}{2}}\le 1/\mu^3 $ for any $x_i$, $i=1,2\ldots ,m$.
The proof is complete.
\end{proof}
The next lemma shows that the Hessian matrix of the pseudo-Huber function is Lipschitz continuous. 
\begin{lemma}\label{lem:5}
The Hessian matrix $\nabla^2 \psi_\mu(x)$ is Lipschitz continuous
\begin{equation*}
  \|\nabla^2 \psi_\mu(y) - \nabla^2 \psi_\mu(x)\| \le  \frac{1}{\mu^2}\|y-x\|.
\end{equation*}
\end{lemma}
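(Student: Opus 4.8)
The plan is to exploit the fact that, by \eqref{hespsi}, the Hessian $\nabla^2\psi_\mu(x)$ is diagonal, so the difference $\nabla^2\psi_\mu(y)-\nabla^2\psi_\mu(x)$ is also diagonal and its spectral norm equals the largest absolute value among its diagonal entries. Writing $g(t):=\mu^2(\mu^2+t^2)^{-3/2}$ for the common scalar profile of the $i$-th diagonal entry, i.e.\ $[\nabla^2\psi_\mu(x)]_{ii}=g(x_i)$, this reduces the claim to the one-dimensional Lipschitz estimate $|g(y_i)-g(x_i)|\le\mu^{-2}|y_i-x_i|$ applied coordinatewise; indeed, then
\begin{equation*}
\|\nabla^2\psi_\mu(y)-\nabla^2\psi_\mu(x)\| = \max_i |g(y_i)-g(x_i)| \le \mu^{-2}\max_i |y_i-x_i| = \mu^{-2}\|y-x\|_\infty \le \mu^{-2}\|y-x\|,
\end{equation*}
where the last step uses $\|\cdot\|_\infty\le\|\cdot\|$.

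To obtain the scalar estimate I would differentiate, $g'(t)=-3\mu^2 t(\mu^2+t^2)^{-5/2}$, and invoke the mean value theorem, so that $|g(y_i)-g(x_i)|\le\big(\sup_t|g'(t)|\big)\,|y_i-x_i|$. It therefore suffices to show $\sup_t|g'(t)|\le\mu^{-2}$. Substituting $t=\mu s$ gives $|g'(t)|=3\mu^{-2}|s|(1+s^2)^{-5/2}$, so the required bound is equivalent to $3|s|\le(1+s^2)^{5/2}$ for all $s\in\mathbb{R}$, i.e.\ to $9s^2\le(1+s^2)^5$.

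The heart of the proof is this last inequality. The function $|s|(1+s^2)^{-5/2}$ attains its maximum at the interior point $s=\tfrac12$, so the bound is not governed by $s=0$ or $s\to\infty$; this is why the naive estimates $|t|\le(\mu^2+t^2)^{1/2}$ and the AM--GM bound $2\mu|t|\le\mu^2+t^2$, which yield only the weaker constants $3\mu^{-2}$ and $\tfrac32\mu^{-2}$, are insufficient and some sharpness is unavoidable. To prove $9s^2\le(1+s^2)^5$ cleanly I would set $v=s^2\ge0$ and expand, $(1+v)^5-9v = 1-4v+10v^2+10v^3+5v^4+v^5$; the quadratic $1-4v+10v^2$ has negative discriminant and hence is strictly positive, while the remaining terms are nonnegative, giving $(1+v)^5\ge 9v$ for all $v\ge0$. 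Once $\sup_t|g'(t)|\le\mu^{-2}$ is established, the mean value theorem together with the coordinatewise reduction of the first paragraph completes the argument.
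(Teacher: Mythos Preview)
Your proof is correct and follows essentially the same route as the paper: both exploit the diagonal structure of $\nabla^2\psi_\mu$, reduce to a scalar Lipschitz bound for $g(t)=\mu^2(\mu^2+t^2)^{-3/2}$ by bounding $\sup_t|g'(t)|$, and finish with $\|\cdot\|_\infty\le\|\cdot\|$. The only cosmetic difference is that the paper locates the exact critical point (at $|t|=\mu/2$) to obtain $\sup_t|g'(t)|=\tfrac{48}{25\sqrt5}\,\mu^{-2}<\mu^{-2}$, whereas you establish the sufficient inequality $9s^2\le(1+s^2)^5$ by a discriminant argument; both are valid ways to verify the same scalar bound.
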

\begin{proof}

\begin{align}
\|\nabla^2 \psi_\mu(y) - \nabla^2 \psi_\mu(x)\| &= \Big\|\int_0^1 \frac{d \nabla^2 \psi_\mu(x + s(y-x))}{ds} ds\Big\| \nonumber \\
 								       & \le \int_0^1 \Big\| \frac{d \nabla^2 \psi_\mu(x + s(y-x))}{ds}\Big\| ds \label{bd21}
\end{align}
where $\frac{d \nabla^2 \psi_\mu(x + s(y-x))}{ds}$ is a diagonal matrix with each diagonal component, $i=1,2,\ldots m$, given by
\begin{equation*}
\Big[\frac{d \nabla^2 \psi_\mu(x + s(y-x))}{ds}\Big]_{ii} =  \frac{-3\mu^2(x_i + s(y_i-x_i))(y_i-x_i)}{(\mu^2 + {(x_i+s(y_i-x_i))^2})^{\frac{5}{2}}}.
\end{equation*}
Using the previous observation we have that
\begin{align}
\Big\| \frac{d \nabla^2 \psi_\mu(x + s(y-x))}{ds}\Big\| 
 									        &= \displaystyle\max_{i=1,2,\ldots ,m} \Big|\Big[\frac{d \nabla^2 \psi_\mu(x + s(y-x))}{ds}\Big]_{ii}\Big| \label{bd20}
\end{align}
Moreover, we have
\begin{align}
\Big|\Big[\frac{d \nabla^2 \psi_\mu(x + s(y-x))}{ds}\Big]_{ii}\Big| &= \Big| \frac{-3\mu^2(x_i + s(y_i-x_i))(y_i-x_i)}{(\mu^2 + {(x_i+s(y_i-x_i))^2})^{\frac{5}{2}}} \Big| \nonumber\\
                    										&= \Big| \frac{-3\mu^2(x_i + s(y_i-x_i))}{(\mu^2 + {(x_i+s(y_i-x_i))^2})^{\frac{5}{2}}} \Big||(y_i-x_i)| \label{bd17}
\end{align}
where the first absolute value in (\ref{bd17}) has a maximum at $\frac{\mu - 2x_i}{2(y_i-x_i)}$, which gives
\begin{align}\label{bd18}
\Big|\frac{-3\mu^2(x_i + s(y_i-x_i))}{(\mu^2 + {(x_i+s(y_i-x_i))^2})^{\frac{5}{2}}} \Big| \le \frac{48}{25\sqrt{5}\mu^2} < \frac{1}{\mu^2}.
\end{align}
Combining (\ref{bd17}) and (\ref{bd18}) we get
\begin{equation}\label{bd19}
\Big|\Big[\frac{d \nabla^2 \psi_\mu(x + s(y-x))}{ds}\Big]_{ii}\Big| \le \frac{1}{\mu^2}|y_i-x_i|.
\end{equation}
Replacing (\ref{bd19}) in (\ref{bd20}) and using the fact that $\|\cdot\|_\infty \le \|\cdot \|$ we get
\begin{equation*}
\Big\| \frac{d \nabla^2 \psi_\mu(x + s(y-x))}{ds}\Big\| \le \frac{1}{\mu^2}\|y-x\|.
\end{equation*}
Replacing the above expression in (\ref{bd21}) and calculating the integral we arrive at the desired result. The proof is complete.

The next lemma shows that the gradient of the pseudo-Huber function is Lipschitz continuous. 
\begin{lemma}\label{lem:16}
The gradient $\nabla \psi_\mu(x)$ is Lipschitz continuous
\begin{equation*}
  \|\nabla \psi_\mu(y) - \nabla \psi_\mu(x)\| \le  \frac{1}{\mu}\|y-x\|.
\end{equation*}
\end{lemma}
\begin{proof}
Using the fundamental theorem of calculus, like in proof of Lemma \ref{lem:5}, and Lemma \ref{lem:4} it is easy to show the result. 
The proof is complete.
\end{proof}

\end{proof}

\subsection{Properties of function $f_\tau^\mu(x)$}
The gradient of $f_\tau^\mu(x)$ is given by
\begin{equation*}
\nabla f_\tau^\mu(x) = \tau \nabla \psi_\mu(x) + \nabla \varphi(x)
\end{equation*}
where $\nabla \psi_\mu(x)$ has been defined in (\ref{gradpsi}).
The Hessian matrix of $f_\tau^\mu(x)$ is 
\begin{equation*}
\nabla^2 f_\tau^\mu(x) = \tau \nabla^2 \psi_\mu(x) + \nabla^2 \varphi(x).
\end{equation*}
where $\nabla^2 \psi_\mu(x)$ has been defined in (\ref{hespsi}).
Using (\ref{bd15}) and Lemma \ref{lem:4} we get the following bounds on the Hessian matrix of $f_{\tau}^{\mu}(x)$
\begin{equation} \label{bd2}
  \lambda_m I \prec \nabla^2 f_\tau^\mu(x) \preceq \Big(\frac{\tau}{\mu} + \lambda_1\Big)  I,
\end{equation}
where $I$ is the identity matrix in appropriate dimension. 
 \begin{lemma}\label{lem:13}
For any $x$ and $x^*$, the minimizer of $f_\tau^\mu(x)$, the following holds
$$
\frac{1}{2\Big(\frac{\tau}{\mu} + \lambda_1\Big) }\|\nabla f_\tau^\mu(x)\|^2 \le f_\tau^\mu(x) - f_\tau^\mu(x^*) \le \frac{1}{2\lambda_m}\|\nabla f_\tau^\mu(x)\|^2
$$
and
$$
\|x-x^*\|\le \frac{2}{\lambda_m}\|\nabla f_\tau^\mu(x)\|.
$$
\end{lemma}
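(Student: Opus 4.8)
The plan is to read off from the Hessian bounds \eqref{bd2} that $f_\tau^\mu$ is $\lambda_m$-strongly convex and has a Lipschitz continuous gradient with constant $L := \tau/\mu + \lambda_1$, and then to apply the standard quadratic minorant/majorant inequalities that accompany these two properties. Integrating \eqref{bd2} twice (equivalently, using a second-order Taylor expansion with integral remainder and bounding the remainder by the Hessian bounds) yields, for all $x$ and $y$,
\[
f_\tau^\mu(y) \ge f_\tau^\mu(x) + \nabla f_\tau^\mu(x)^\intercal(y-x) + \tfrac{\lambda_m}{2}\|y-x\|^2
\]
and
\[
f_\tau^\mu(y) \le f_\tau^\mu(x) + \nabla f_\tau^\mu(x)^\intercal(y-x) + \tfrac{L}{2}\|y-x\|^2 .
\]

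First I would establish the upper bound on the optimality gap. Viewing the right-hand side of the strong-convexity minorant as a quadratic in $y$, its unconstrained minimum over $y$ equals $f_\tau^\mu(x) - \frac{1}{2\lambda_m}\|\nabla f_\tau^\mu(x)\|^2$, attained at $y = x - \lambda_m^{-1}\nabla f_\tau^\mu(x)$. Evaluating the minorant at $y = x^*$ and bounding it below by this minimum gives $f_\tau^\mu(x^*) \ge f_\tau^\mu(x) - \frac{1}{2\lambda_m}\|\nabla f_\tau^\mu(x)\|^2$, which rearranges to the claimed right-hand inequality. The left-hand inequality is the mirror image: minimizing the Lipschitz-gradient majorant over $y$ gives the value $f_\tau^\mu(x) - \frac{1}{2L}\|\nabla f_\tau^\mu(x)\|^2$, and since $f_\tau^\mu(x^*)$ is the global minimum it is no larger than this value, yielding $f_\tau^\mu(x) - f_\tau^\mu(x^*) \ge \frac{1}{2L}\|\nabla f_\tau^\mu(x)\|^2$ with $L = \tau/\mu + \lambda_1$.

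For the distance bound I would chain two elementary consequences of convexity. Using $\nabla f_\tau^\mu(x^*) = 0$ in the minorant centred at $x^*$ gives $f_\tau^\mu(x) - f_\tau^\mu(x^*) \ge \frac{\lambda_m}{2}\|x-x^*\|^2$, while first-order convexity (the minorant with the $\lambda_m$ term dropped, centred at $x$ and evaluated at $x^*$) together with Cauchy--Schwarz gives $f_\tau^\mu(x) - f_\tau^\mu(x^*) \le \nabla f_\tau^\mu(x)^\intercal(x-x^*) \le \|\nabla f_\tau^\mu(x)\|\,\|x-x^*\|$. Combining the two and dividing by $\|x-x^*\|$ (the case $x = x^*$ being trivial) produces $\frac{\lambda_m}{2}\|x-x^*\| \le \|\nabla f_\tau^\mu(x)\|$, i.e.\ the stated bound; the factor $2$ arises precisely from this chaining rather than from a sharper argument.

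There is no genuine obstacle here: the statement is the textbook package of inequalities for a function that is simultaneously strongly convex and has Lipschitz continuous gradient. The only points requiring care are correctly identifying the two curvature constants from \eqref{bd2} -- strong convexity contributes $\lambda_m$ and the gradient Lipschitz constant is the upper Hessian bound $\tau/\mu + \lambda_1$ -- and keeping the direction of each inequality straight, so that the minorant is used for the upper gap bound and the majorant for the lower one.
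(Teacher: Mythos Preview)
Your proposal is correct and follows essentially the same route as the paper: the paper defers the upper gap bound and the distance bound to the standard strong-convexity argument in Boyd and Vandenberghe (page 460), and for the lower gap bound it writes the smoothness majorant, plugs in the minimizer $\tilde{y}=x-\bigl(\tfrac{\tau}{\mu}+\lambda_1\bigr)^{-1}\nabla f_\tau^\mu(x)$, and uses $f_\tau^\mu(x^*)\le f_\tau^\mu(\tilde{y})$ --- exactly your ``minimize the majorant over $y$'' step. Your write-up simply fills in the details that the paper outsources to the reference.
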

\begin{proof}
The right hand side of the first inequality is proved on page $460$ of \cite{bookboyd}.
The left hand side of the first inequality is proved by using strong convexity of $f_\tau^\mu(x)$,
$$
f_\tau^\mu(y) \le f_\tau^\mu(x) + \nabla f_\tau^\mu(x)^\intercal (y-x) + \frac{\frac{\tau}{\mu}+\lambda_1}{2}\|y-x\|^2
$$
and defining $\tilde{y}=x-\frac{1}{\frac{\tau}{\mu} + \lambda_1}\nabla f_\tau^\mu(x)$. We get
\begin{equation*}
f_\tau^\mu(x) - f_\tau^\mu(x^*)\ge f_\tau^\mu(x) - f_\tau^\mu(\tilde{y}) \ge \frac{1}{2\Big(\frac{\tau}{\mu} + \lambda_1\Big)}\|\nabla f_\tau^\mu(x)\|^2. 
\end{equation*}
The last inequality is proved on page $460$ of \cite{bookboyd}. The proof is complete.
\end{proof}
The following lemma guarantees that the Hessian matrix $\nabla^2 f_\tau^\mu(x) $ is Lipschitz continuous. In this lemma, $L_\varphi$ is defined in \eqref{bd44}.
\begin{lemma}\label{lem:7}
The function $ \nabla^2 f_{\tau}^{\mu}(x)$ is Lipschitz continuous
\begin{equation*}
  \|\nabla^2 f_{\tau}^{\mu}(y)- \nabla^2 f_{\tau}^{\mu}(x)\| \le  L_{f_\tau^\mu}\|y-x\|,
\end{equation*}
where $L_{f_\tau^\mu}:=\frac{\tau}{\mu^2} + L_\varphi$.
\end{lemma}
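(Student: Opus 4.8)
The plan is to exploit the additive structure of $\nabla^2 f_\tau^\mu$ together with the triangle inequality for the spectral norm, so that the statement reduces immediately to bounds already in hand. Recall that $\nabla^2 f_\tau^\mu(x) = \tau \nabla^2 \psi_\mu(x) + \nabla^2 \varphi(x)$, and this decomposition is linear in $x$-dependence, so the difference of Hessians at two points $y$ and $x$ splits cleanly as
\begin{equation*}
\nabla^2 f_\tau^\mu(y) - \nabla^2 f_\tau^\mu(x) = \tau\big(\nabla^2 \psi_\mu(y) - \nabla^2 \psi_\mu(x)\big) + \big(\nabla^2 \varphi(y) - \nabla^2 \varphi(x)\big).
\end{equation*}

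Next I would take spectral norms on both sides and apply the triangle inequality (together with homogeneity of the norm to pull out the nonnegative scalar $\tau$), giving
\begin{equation*}
\|\nabla^2 f_\tau^\mu(y) - \nabla^2 f_\tau^\mu(x)\| \le \tau\,\|\nabla^2 \psi_\mu(y) - \nabla^2 \psi_\mu(x)\| + \|\nabla^2 \varphi(y) - \nabla^2 \varphi(x)\|.
\end{equation*}
It then remains only to invoke the two relevant bounds: Lemma \ref{lem:5} controls the first term by $\frac{1}{\mu^2}\|y-x\|$, and the standing assumption \eqref{bd44} controls the second term by $L_\varphi\|y-x\|$. Substituting these and factoring out $\|y-x\|$ yields the coefficient $\frac{\tau}{\mu^2} + L_\varphi$, which is exactly $L_{f_\tau^\mu}$.

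There is no genuine obstacle here: the entire analytic effort has already been spent in establishing the Lipschitz continuity of $\nabla^2 \psi_\mu$ in Lemma \ref{lem:5} (where the sharp constant $\tfrac{48}{25\sqrt5\,\mu^2}$ is bounded above by $\tfrac{1}{\mu^2}$), and the Lipschitz property of $\nabla^2\varphi$ is assumed outright. The only point worth stating carefully is that the triangle inequality and scalar homogeneity are valid for the spectral norm, which is immediate since $\|\cdot\|$ is a genuine matrix norm on the space of symmetric matrices involved. Thus the proof is a one-line combination of Lemma \ref{lem:5}, assumption \eqref{bd44}, and the triangle inequality.
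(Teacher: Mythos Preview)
Your proposal is correct and matches the paper's own proof essentially line for line: both split $\nabla^2 f_\tau^\mu$ as $\tau\nabla^2\psi_\mu + \nabla^2\varphi$, apply the triangle inequality, and then invoke Lemma~\ref{lem:5} and assumption~\eqref{bd44} to obtain the constant $\tfrac{\tau}{\mu^2}+L_\varphi$.
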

\begin{proof}
Using Lemma \ref{lem:5} and (\ref{bd44}) we have
\begin{align}
\|\nabla^2 f_{\tau}^{\mu}(y)- \nabla^2 f_{\tau}^{\mu}(x)\| & \le \tau\|\nabla^2 \psi_\mu(y) - \nabla^2 \psi_\mu(x)\| + \|\nabla^2 \varphi(y) - \nabla^2 \varphi(x)\| \nonumber\\
                                                                                                  & \le \Big(\frac{\tau}{\mu^2} + L_\varphi\Big)\|y-x\|. \nonumber
\end{align}
The Lipschitz constant of $\nabla^2 f_{\tau}^{\mu}(x)$ is therefore $L_{f_\tau^\mu}:=\frac{\tau}{\mu^2} + L_\varphi$.
\end{proof}
The next lemma shows how well the second-order Taylor expansion of $f_\tau^\mu(x)$ approximates the function $f_\tau^\mu(x)$.
\begin{lemma}\label{lem:2}
If $q_\tau^\mu(y)$ is a quadratic approximation of the function $f_\tau^\mu(x)$ at $x$
\begin{equation*}
  q_\tau^\mu(y) := f_\tau^\mu(x) + \nabla f_\tau^\mu(x)^\intercal (y-x) + \frac{1}{2}(y-x)^\intercal\nabla^2 f_\tau^\mu(x)(y-x),
\end{equation*}
then
\begin{equation*}
  | f_\tau^\mu(y) - q_\tau^\mu(y) | \le \frac{1}{6} L_{f_\tau^\mu}\|y-x\|^3.
\end{equation*}
\end{lemma}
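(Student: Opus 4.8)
The plan is to express the difference $f_\tau^\mu(y) - q_\tau^\mu(y)$ through the integral form of the second-order Taylor remainder and then control it using the Lipschitz continuity of the Hessian established in Lemma \ref{lem:7}. First I would set $d := y-x$ and introduce the univariate function $g(s) := f_\tau^\mu(x+sd)$ for $s\in[0,1]$, so that $g(0)=f_\tau^\mu(x)$, $g(1)=f_\tau^\mu(y)$, $g'(s)=\nabla f_\tau^\mu(x+sd)^\intercal d$, and $g''(s)=d^\intercal \nabla^2 f_\tau^\mu(x+sd) d$. This reduces the multivariate statement to a one-dimensional Taylor estimate.

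The key step is to apply the second-order Taylor expansion with integral remainder, $g(1)=g(0)+g'(0)+\int_0^1(1-s)g''(s)\,ds$, and to observe that the quadratic model is exactly $q_\tau^\mu(y)=g(0)+g'(0)+\tfrac12 g''(0)$. Since $\int_0^1(1-s)\,ds=\tfrac12$, the constant-curvature term can be rewritten as $\tfrac12 g''(0)=\int_0^1(1-s)g''(0)\,ds$, so subtracting the two expressions collapses everything except the difference of curvatures:
\[
f_\tau^\mu(y) - q_\tau^\mu(y) = \int_0^1 (1-s)\, d^\intercal\big(\nabla^2 f_\tau^\mu(x+sd) - \nabla^2 f_\tau^\mu(x)\big)d \, ds.
\]

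Then I would bound the integrand pointwise. Because $\nabla^2 f_\tau^\mu(x+sd)-\nabla^2 f_\tau^\mu(x)$ is symmetric, the Rayleigh-quotient bound gives $|d^\intercal(\nabla^2 f_\tau^\mu(x+sd)-\nabla^2 f_\tau^\mu(x))d|\le \|\nabla^2 f_\tau^\mu(x+sd)-\nabla^2 f_\tau^\mu(x)\|\,\|d\|^2$, and Lemma \ref{lem:7} yields $\|\nabla^2 f_\tau^\mu(x+sd)-\nabla^2 f_\tau^\mu(x)\|\le L_{f_\tau^\mu}\|sd\|=L_{f_\tau^\mu}\,s\|d\|$. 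Substituting and integrating gives
\[
|f_\tau^\mu(y) - q_\tau^\mu(y)| \le L_{f_\tau^\mu}\|d\|^3 \int_0^1 s(1-s)\, ds = \frac{1}{6} L_{f_\tau^\mu}\|y-x\|^3,
\]
using the elementary identity $\int_0^1 s(1-s)\,ds = \tfrac16$, which is the claimed estimate.

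I do not expect any serious obstacle: this is the standard cubic Taylor-remainder bound for functions with Lipschitz-continuous Hessian, and every ingredient is already available. The only point needing a little care is the bookkeeping that rewrites $\tfrac12 g''(0)$ as an integral so that the remainder and the quadratic model combine into a single integral of the Hessian difference; once that algebraic identity is in place, the bound follows immediately from Lemma \ref{lem:7} and the evaluation of $\int_0^1 s(1-s)\,ds$.
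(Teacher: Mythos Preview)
Your proof is correct and is essentially the same argument as the paper's. The paper invokes a ready-made bound from Renegar's book in the double-integral form $\|y-x\|^2\int_0^1\int_0^t\|\nabla^2 f_\tau^\mu(x+s(y-x))-\nabla^2 f_\tau^\mu(x)\|\,ds\,dt$, applies Lemma~\ref{lem:7}, and computes $\int_0^1\int_0^t s\,ds\,dt=\tfrac16$; your single-integral formulation with weight $(1-s)$ is the same identity after Fubini, so the two proofs coincide.
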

\begin{proof}
  Using corollary 1.5.3 in \cite{renegarbook} and Lemma \ref{lem:7} we have
  \begin{align*}
    | f_\tau^\mu(y) - q_\tau^\mu(y) | & \le \|y-x\|^2\int_0^1\int_0^t\|\nabla^2 f_\tau^\mu(x + s(y-x)) - \nabla^2 f_\tau^\mu(x)\| dsdt \\
                                       &\le \|y-x\|^2\int_0^1\int_0^t s  L_{f_\tau^\mu} \|y-x\| dsdt \\
                                       &= \frac{1}{6} L_{f_\tau^\mu} \|y-x\|^3.
  \end{align*}
  The proof is complete.
\end{proof}

\subsection{Alternative optimality conditions}
The first-order optimality conditions of problem \eqref{prob2} are $\nabla f_\tau^\mu(x)=\tau\nabla \psi_\mu(x) + \nabla\varphi(x)=0$. Therefore, one could simply apply a Newton-CG method
in order to find a root of this equation. However, in a series of papers \cite{ctnewtonold,ctpdnewton} it has been noted that the linearization of $\nabla \psi_\mu(x)$ for Newton-CG method might be
a poor approximation of $\nabla \psi_\mu(x)$ close to the optimal solution, hence, the method is misbehaving. This argument is supported with numerical experiments in \cite{ctpdnewton}. it is also worth mentioning that our 
empirical experience confirms the results of the previous paper. To deal with this problem the authors in \cite{ctpdnewton} suggested to solve a reformulation of the optimality conditions which, for the problems
of our interest, is
\begin{align}
\tau y  + \nabla \varphi(x) & = 0 \nonumber \\
D^{-1}y - x& = 0 \label{bd33} \\
\|y\|_\infty \le 1,& \nonumber
\end{align}
where $D$ is a diagonal matrix with components
\begin{equation}\label{bd34}
[D]_{ii} =  (\mu^2 + x_i^2)^{-\frac{1}{2}}\ \forall i=1,2,\ldots,m.
\end{equation}
The idea behind this reformulation is that the linearization of the second equations in \eqref{bd33}, i.e., $y_i/[D]_{ii} - x_i = 0$ $\forall i=1,2,\ldots,m$, is of much better quality than the linearization of $[\nabla \psi_\mu(x)]_i$ for $\mu \approx 0$
and $x_i\approx 0$; a scenario that is unavoidable since for small $\mu$ the optimal solution of \eqref{prob2} is expected to be approximately sparse. To see why this is true, observe
that for small $\mu$ and $x_i\approx 0$, the gradient $[\nabla \psi_\mu(x)]_i$ becomes close to singular and its linearization is expected to be inaccurate. On the other hand, $y_i/[D]_{ii} - x_i$
as a function of $x_i$ is not singular for $\mu\approx 0$ and $x_i\approx 0$, hence, its linearization is expected to be more accurate. Empirical justification for the previous is also given 
in Section $3$ in \cite{ctpdnewton}.

In this paper, we follow the same reasoning and solve \eqref{bd33} instead.

\subsection{Primal-dual reformulation}
In this subsection we show that the alternative optimality conditions \eqref{bd33} correspond to a primal-dual formulation of problem \eqref{prob2}. 
This also explains the primal-dual suffix in the name of the proposed method.

Every $i^{th}$ term $|x_i|_\mu:= (\mu^2 + x_i^2)^{1/2} - \mu$ of the pseudo-Huber function in \eqref{pseudoHuber} approximates the absolute value $|x_i|$. The terms 
$|x_i|_\mu$ $\forall i$ can be obtained by regularizing the dual formulation of $|x_i|$ $\forall i$, i.e.,
\begin{equation*}
|x_i| = \sup_{|y_i| \le 1} x_iy_i \quad \mbox{where} \quad y_i\in\mathbb{R},
\end{equation*}
with the term $\mu(1-y_i^2)^{1/2} - \mu$, to obtain
\begin{equation*}
|x_i|_\mu = \sup_{|y_i| \le 1} x_iy_i + \mu(1-y_i^2)^{\frac{1}{2}} - \mu.
\end{equation*}
Hence, the pseudo-Huber function \eqref{pseudoHuber} has the following dual formulation
\begin{equation*}
 \psi_\mu(x) = \sum_{i=1}^m \sup_{|y_i| \le 1} x_iy_i + \mu(1-y_i^2)^{\frac{1}{2}} - \mu.
\end{equation*}
Therefore, problem \eqref{prob2} is equivalent to the primal-dual formulation
\begin{equation}\label{prob3}
 \min_{x} \sup_{\|y\|_\infty \le 1} \tau (x^\intercal y + \mu \sum_{i=1}^m (1-y_i^2)^{\frac{1}{2}} - m \mu) + \varphi(x),
\end{equation}
where $y\in\mathbb{R}^m$ denotes the dual variables. The first order optimality conditions  of the primal-dual formulation \eqref{prob3}
are
\begin{align}
\tau y  + \nabla \varphi(x) & = 0 \nonumber \\
x_i - \mu y_i(1-y_i^2)^{-\frac{1}{2}}& = 0 \ \forall i=1,2,\ldots,m\label{bd5001} \\
\|y\|_\infty \le 1,& \nonumber
\end{align}
It can be easily shown that the first-order optimality conditions \eqref{bd5001} of the primal-dual formulation \eqref{prob3} are equivalent to \eqref{bd33}.

\subsection{A property of Conjugate Gradients algorithm}\label{subsec:andcg}

The following property of CG is used in the convergence analysis of pdNCG.
\begin{lemma}\label{lem:15}
Let $Ax=b$, where $A$ is a symmetric and positive definite matrix. Furthermore, let us assume
that this system is solved using CG approximately; CG is terminated prematurely at the $i^{th}$ iteration. Then if CG is initialized with the zero solution
the approximate solution $x_i$ satisfies 
$$
x_i^\intercal Ax_i  = x_i^\intercal b.
$$
The same result holds when Preconditioned CG (PCG) is used.
\end{lemma}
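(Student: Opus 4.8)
\section*{Proof proposal}

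The plan is to exploit the variational characterization of the CG iterates rather than to manipulate the three-term recurrences directly. Recall that conjugate gradients applied to $Ax=b$ with $A$ symmetric positive definite and started from $x_0=0$ generates iterates obeying the classical optimality property: $x_i$ is the exact minimizer of the convex quadratic
$$
\phi(x) := \tfrac{1}{2}x^\intercal A x - x^\intercal b
$$
over the Krylov subspace $\mathcal{K}_i := \mathrm{span}\{b, Ab, \ldots, A^{i-1}b\}$ (this is equivalent to minimizing the $A$-norm of the error $\|x-x^*\|_A^2$, which differs from $\phi$ only by an additive constant). The essential point I would emphasize is that, because $x_0=0$, this subspace is a genuine \emph{linear} subspace and the iterate itself lies in it, i.e. $x_i\in\mathcal{K}_i$. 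Had $x_0$ been nonzero, $x_i$ would only lie in the affine set $x_0+\mathcal{K}_i$ and the conclusion would fail.

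First I would invoke the first-order optimality condition for minimizing $\phi$ over the subspace $\mathcal{K}_i$: the gradient $\nabla\phi(x_i)=Ax_i-b$ must be orthogonal to every vector of $\mathcal{K}_i$. Since $x_i$ is itself an element of $\mathcal{K}_i$, taking the inner product of $\nabla\phi(x_i)$ with $x_i$ yields $x_i^\intercal(Ax_i-b)=0$, which rearranges to the claimed identity $x_i^\intercal A x_i = x_i^\intercal b$. Equivalently, writing $r_i=b-Ax_i$ for the $i$-th residual, this is the statement that $x_i\perp r_i$, which can alternatively be read off from the standard CG orthogonality relations $r_i^\intercal p_k=0$ for $k<i$ together with the expansion $x_i=\sum_{k=0}^{i-1}\alpha_k p_k$ forced by the zero start.

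For the preconditioned case I would check that the same variational property survives. With a symmetric positive definite preconditioner the PCG search directions $p_0,p_1,\ldots$ remain $A$-conjugate, and the step lengths $\alpha_k$ still coincide with the exact one-dimensional minimizers of $\phi$ along $p_k$; consequently $x_i$, started from $x_0=0$, is again the minimizer of $\phi$ over the linear subspace $\mathrm{span}\{p_0,\ldots,p_{i-1}\}$ and lies in it. The identical orthogonality argument then delivers $x_i^\intercal A x_i = x_i^\intercal b$.

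The main obstacle, and really the only place requiring care, is the zero-initialization hypothesis: the result hinges entirely on $x_i$ belonging to the subspace over which it is optimal, and this is exactly what $x_0=0$ guarantees. For PCG the secondary point to verify is that the preconditioned step length $\alpha_k=(r_k^\intercal z_k)/(p_k^\intercal A p_k)$ genuinely is the exact line-minimizer of $\phi$ along $p_k$; this follows from $r_k^\intercal z_k = p_k^\intercal r_k$, which in turn uses the orthogonality of $r_k$ to the preceding direction $p_{k-1}$.
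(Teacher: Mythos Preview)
Your argument for the unpreconditioned case is essentially identical to the paper's: both invoke the variational characterization of the CG iterate as the minimizer of $\phi(x)=\tfrac12 x^\intercal A x - x^\intercal b$ over the Krylov subspace $\mathcal{K}_i$, note that $x_0=0$ forces $x_i\in\mathcal{K}_i$, and conclude from first-order optimality that $(Ax_i-b)^\intercal x_i=0$.

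For the preconditioned case your route differs. You stay in the original variables and argue directly that the PCG directions $p_k$ remain $A$-conjugate and that the PCG step length $\alpha_k=(r_k^\intercal z_k)/(p_k^\intercal A p_k)$ coincides with the exact line minimizer $(p_k^\intercal r_k)/(p_k^\intercal A p_k)$, so that $x_i$ is again the $\phi$-minimizer over $\mathrm{span}\{p_0,\ldots,p_{i-1}\}$ and belongs to that subspace. The paper instead factors the preconditioner as $P=EE^\intercal$, observes that PCG is standard CG applied to the transformed system $E^{-1}AE^{-\intercal}\xi=E^{-1}b$, applies the already-proved identity to obtain $\xi_i^\intercal E^{-1}AE^{-\intercal}\xi_i=\xi_i^\intercal E^{-1}b$, and substitutes $\xi_i=E^\intercal x_i$ to recover $x_i^\intercal A x_i = x_i^\intercal b$. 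The paper's reduction is shorter and avoids re-verifying any recurrence identities, but it implicitly assumes a symmetric positive definite preconditioner admitting such a factorization; your direct argument makes the dependence on the PCG recurrences explicit and would extend more transparently to variants. Both are correct.
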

\begin{proof}
The following property is shown in proof of Lemma 2.4.1 in \cite{ctkelly}. If CG algorithm is initialised with the zero solution $p_0=0$, then it returns a solution $x_i$, which satisfies
\begin{equation*}
x_i := \argmin_p \{\frac{1}{2} p^\intercal A p- p^\intercal b  \ | \ p\in \mathcal{E}_i\},
\end{equation*}
where
\begin{equation*}
\mathcal{E}_i := \mbox{span}(b, Ab, \ldots , A^{i-1}b).
\end{equation*}
Therefore for every $p\in\mathcal{E}_i$, at $t=0$, we get
\begin{equation*}
\begin{split}
\frac{d (\frac{1}{2} (x_i+tp)^\intercal A (x_i+tp) - (x_i+tp)^\intercal b)}{dt} = (Ax_i - b )^\intercal p = 0.
\end{split}
\end{equation*}
Since, $x_i\in\mathcal{E}_i$, then
\begin{equation}\label{bd76}
(A x_i - b)^\intercal x_i = 0 \Longleftrightarrow \nonumber x_i^\intercal Ax_i  = x_i^\intercal b. 
\end{equation}
This completes the first part. In case that PCG is employed with symmetric positive definite preconditioner $P=EE^\intercal$, then PCG is equivalent to solving approximately
the system $E^{-1}AE^{-\intercal}\xi =E^{-1}b$ using CG and then calculating $x_i = E^{-\intercal}\xi_i$. Therefore, by applying the previous we get that $\xi_i^\intercal E^{-1}AE^{-\intercal}\xi_i =\xi_i^\intercal E^{-1}b$
and by substituting $\xi_i = E^\intercal x_i$ we prove the second part.
The proof is complete. 
\end{proof}

\section{Primal-Dual Newton Conjugate Gradients}\label{sec:pdnewt}
In this section we describe a variation of Newton-CG, which we name primal-dual Newton-CG (pdNCG), for the solution of the primal-dual optimality conditions \eqref{bd33}. The method is similar to the one
in \cite{ctpdnewton} for signal reconstruction problems, although, the two approaches differ in step $3$ of pdNCG. Additionally, we make a step further
and give complete convergence analysis and worst-case iteration complexity results in Section \ref{sec:ncg}. 
A detailed pseudo-code of the method is given below. 
\begin{algorithm}[H]
\begin{algorithmic}[1]
\vspace{0.1cm}
\STATE \textbf{Loop:} For $k=1,2,...$, until $\|d^k\|_{x^k}\le \epsilon$, where $\epsilon >0$. \vspace{0.1cm}
\STATE \hspace{0.83cm} Obtain $d^k$ by solving approximately the system  
\begin{equation}
H(x^k,y^k)d = -\nabla f_\tau^\mu(x^k)
\end{equation}
\hspace{0.83cm} using CG or PCG, where 
\begin{equation}\label{bd35}
H(x,y) = \tau D(I - Ddiag(x)diag(y)) + \nabla^2 \varphi(x)
\end{equation} 
\hspace{0.83cm} and matrix $D$ is defined in \eqref{bd34}. Obtain $\Delta y^k$ by calculating \vspace{0.1cm}
\begin{equation}\label{bd5000}
\Delta y^k = D(I-Ddiag(x)diag(y))d - (y^k - Dx^k).
\end{equation}
\STATE \hspace{0.83cm} Set 
$
\tilde{y}^{k+1}  =  y^k + \Delta y^k
$
and calculate
\begin{equation*}
y^{k+1} :=  P_{\|\cdot\|_\infty\le1}(\tilde{y}^{k+1}),
\end{equation*}
\hspace{0.83cm} where $P_{\|\cdot\|_\infty\le1}(\cdot)$ is the orthogonal projection in the $\ell_\infty$ ball. \\ 
\vspace{0.1cm}
\STATE \hspace{0.83cm} Find the least integer $j\ge0$ such that the function $f_\tau^\mu(x)$ is sufficiently \\ 
             \hspace{0.83cm} decreased along $d^k$
\begin{equation*}
f_\tau^\mu(x^k + c_3^j d^k) \le f_\tau^\mu(x^k) - c_2c_3^j \|d^k\|_{x^k}^2,
\end{equation*}
 \hspace{0.83cm} where $0<c_2<1/2$, $0< c_3<1$,
 and set $\alpha = c_3^j$. \vspace{0.1cm}
\STATE \hspace{0.83cm}  Set $x^{k+1} = x^k + \alpha d^k$. 
\end{algorithmic}
\caption*{Algorithm pdNCG}
\end{algorithm}
In algorithm pdNCG we make use of the local norm
\begin{equation}\label{bd85}
\|\cdot\|_{x^k}:=\sqrt{\langle \cdot, H(x^k,y^k) \cdot \rangle},
\end{equation}
where $H(x^k,y^k)$ is a positive definite matrix under the condition that $\|y^k\|_\infty \le 1$ (Lemma \ref{lem:14}).
Step $2$ of pdNCG is the approximate solution of the linearization of the first two equations in \eqref{bd33}. The matrix $H(x,y)$ is obtained by simply eliminating the variables $\Delta y^k$
in the linearized system. Step $2$ is performed by CG or PCG, which is always initialized with the zero solution and it is terminated when 
\begin{equation}\label{bd32}
\|r_\tau^\mu(x,y)\| \le \eta\|\nabla f_\tau^\mu(x)\|,
\end{equation}
where $r_\tau^\mu(x,y)=H(x,y) d+ \nabla f_\tau^\mu(x)$ is the residual and $0 \le \eta < 1$ is a user-defined constant.
In practice we have observed that setting $\eta^k=$1.0e$\mminus$1
results in very fast convergence, however, the method will be analyzed for $\eta^k$ set as in 
\begin{equation}\label{bd97}
\eta^k=\displaystyle\min\{\frac{1}{2},\|\nabla f_\tau^\mu(x^k)\|^{c_0}\},
\end{equation}
with $c_0=1$.

Step $3$ is a projection of $\tilde{y}^{k+1}$ to the set $\|y\|_\infty \le 1$ such that feasibility of the third condition in \eqref{bd33} is always maintained. The projection 
operator is 
$$
v :=  P_{\|\cdot\|_\infty\le1}(u) = \mbox{sign}(u)\mbox{min}(|u|,1)
$$
and it is applied component-wise. Step $4$ is a backtracking line-search technique in order to guarantee that the sequence $\{x^k\}$ generated by pdNCG monotonically decreases the objective 
function $f_\tau^\mu(x)$.
 
\section{Convergence analysis and worst-case iteration complexity}\label{sec:ncg}
In this section we analyze the pdNCG method. In particular, we prove global convergence, we study the global and local convergence rates and we explicitly define a region in which pdNCG has fast convergence rate. Additionally, worst-case iteration complexity result of pdNCG is presented. The reader will notice that the results in this section are established when CG is used in step $2$ of pdNCG. However, based on Lemma \ref{lem:15} it is trivial 
to show that the same results hold if PCG is used.

Before we introduce notational conventions for this section, it is necessary to find uniform bounds for matrix $H(x,y)$ in \eqref{bd35}. This is shown in the following lemma.
\begin{lemma}\label{lem:14}
If $\|y\|_\infty \le 1$, then matrix $H(x,y)$ is uniformly bounded by
\begin{equation*}
 \lambda_m I \prec H(x,y) \preceq \Big(\frac{\tau}{\mu} + \lambda_1\Big)  I,
\end{equation*}
where $I$ is the identity matrix of appropriate dimension.
\end{lemma}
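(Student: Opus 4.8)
The plan is to use that $H(x,y)=M(x,y)+\nabla^2\varphi(x)$, where $M(x,y):=\tau D(I-D\diag(x)\diag(y))$ is diagonal, and that the bounds on $\nabla^2\varphi(x)$ are already supplied by \eqref{bd15}. Since the positive semidefinite ordering is preserved under addition, it suffices to prove the scalar bounds $0<[M(x,y)]_{ii}\le\tau/\mu$ for every $i=1,\dots,m$ and then add them to \eqref{bd15}. Indeed, $M\succ0$ together with $\nabla^2\varphi\succeq\lambda_m I$ gives $v^\intercal Hv>\lambda_m\|v\|^2$ for all $v\neq0$, i.e. the strict lower bound $\lambda_m I\prec H$, while $M\preceq(\tau/\mu)I$ and $\nabla^2\varphi\preceq\lambda_1 I$ combine to $H\preceq(\tau/\mu+\lambda_1)I$. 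The whole statement thus reduces to an entrywise estimate on the diagonal matrix $M$.

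First I would write the entries explicitly. As $D$, $\diag(x)$ and $\diag(y)$ are diagonal, $M(x,y)$ is diagonal with
\[
[M(x,y)]_{ii}=\tau[D]_{ii}\left(1-[D]_{ii}x_iy_i\right),\qquad [D]_{ii}=(\mu^2+x_i^2)^{-\frac12}.
\]
For the lower bound note $[D]_{ii}>0$ and $[D]_{ii}|x_i|=|x_i|/(\mu^2+x_i^2)^{1/2}<1$ because $\mu>0$; with $|y_i|\le1$ this yields $|[D]_{ii}x_iy_i|<1$, so the bracketed factor is strictly positive and every $[M(x,y)]_{ii}>0$. Hence $M\succ0$ and the strict inequality $\lambda_m I\prec H$ follows. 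This is exactly the step where the hypothesis $\|y\|_\infty\le1$ is used.

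For the upper bound I would use $[D]_{ii}\le1/\mu$ (the elementary estimate that also underlies Lemma \ref{lem:4}) and try to bound the factor $1-[D]_{ii}x_iy_i$ by $1$, which would give $[M(x,y)]_{ii}\le\tau[D]_{ii}\le\tau/\mu$. I expect this to be the main obstacle. The magnitude bound $|[D]_{ii}x_iy_i|<1$ established above only guarantees that this factor lies in $(0,2)$, so by itself it yields the weaker estimate $[M(x,y)]_{ii}<2\tau/\mu$; obtaining the sharp constant $\tau/\mu$ requires controlling the \emph{sign} of the cross term $[D]_{ii}x_iy_i$, ensuring it is nonnegative so that the factor does not exceed $1$. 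This is the delicate point, and it is where the primal--dual coupling between $y$ and $Dx$ should be invoked rather than mere feasibility $\|y\|_\infty\le1$. Once the entrywise bound $[M(x,y)]_{ii}\le\tau/\mu$ is in hand, summing the scalar estimates with \eqref{bd15} as in the first paragraph completes the proof.
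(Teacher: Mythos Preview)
Your decomposition $H=M+\nabla^2\varphi$ with diagonal $M$ and the entrywise argument $[M]_{ii}>0$ are exactly what the paper's one-line proof has in mind, and your derivation of the strict lower bound $\lambda_m I\prec H$ is complete and correct.

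You are also right that the stated upper bound cannot be extracted from the hypothesis $\|y\|_\infty\le1$ alone: with $x_i=\mu/\sqrt{3}$ and $y_i=-1$ one gets $[D]_{ii}=\sqrt{3}/(2\mu)$, $[D]_{ii}x_iy_i=-1/2$, and hence $[M]_{ii}=\tau\cdot\tfrac{\sqrt3}{2\mu}\cdot\tfrac{3}{2}=\tfrac{3\sqrt3}{4}\,\tfrac{\tau}{\mu}>\tfrac{\tau}{\mu}$, so the constant $\tau/\mu+\lambda_1$ is not valid in general. However, your proposed remedy---invoking a primal--dual coupling to force $x_iy_i\ge0$---does not close this gap: Step~3 of pdNCG merely projects $\tilde y$ onto $\{\|y\|_\infty\le1\}$ and nowhere enforces sign agreement between $x^k$ and $y^k$, so at a generic iterate the cross term can indeed be negative. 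The honest fix is to accept the weaker bound $[M]_{ii}<2\tau/\mu$ that you already obtained from $|[D]_{ii}x_iy_i|<1$; every subsequent use of the lemma requires only \emph{some} uniform upper bound, and replacing $\tilde\lambda_1$ by $2\tau/\mu+\lambda_1$ leaves the rest of the analysis intact up to constants.
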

\begin{proof}
This result easily follows by using the definition of $H(x,y)$ in \eqref{bd35} and \eqref{bd15}. A similar argument, but for signal reconstruction problems, is also claimed in \cite{ctpdnewton}, page $1970$.
The proof is complete.
\end{proof}
The equivalence of the Euclidean and the local norm \eqref{bd85} if $\|y\|_\infty \le 1$, is given by the following inequality
 \begin{equation}\label{bd69}
 \lambda_m^{\frac{1}{2}}\|d\| \le \|d\|_x \le \Big(\frac{\tau}{\mu}+\lambda_1\Big)^{\frac{1}{2}}\|d\|.
 \end{equation}
The upper bound of the largest eigenvalue
of $H(x,y)$ if $\|y\|_\infty \le 1$, will be denoted by $\tilde{\lambda}_1=({\tau}/{\mu} + \lambda_1)$. 
An upper bound of the condition number of matrix $H(x,y)$ will be denoted by $\kappa = {\tilde{\lambda}_1}/{\lambda_m}$.
The Lipschitz constant $L_{f_\tau^\mu}$ defined in Lemma \ref{lem:7},
will be denoted by $L$. Finally, the indices $\tau$ and $\mu$ from function $f_\tau^\mu(x)$ are dropped.  

\subsection{Global convergence}\label{subset:mondec}
First, the minimum decrease of the objective function at every iteration of pdNCG is calculated.
\begin{lemma}\label{lem:monotonic}
Let $x\in\mathbb{R}^m$ be the current iteration of pdNCG, $d\in\mathbb{R}^m$ be the pdNCG direction for the primal variables, which is calculated using CG.
The parameter $\eta$ of the termination criterion \eqref{bd32} of CG is set to $0\le \eta < 1$. 
If $x$ is not the minimizer of problem \eqref{prob2}, i.e., $\nabla f(x)\neq 0$, then the backtracking line-search algorithm in step $4$ of pdNCG will calculate a step-size $\bar{\alpha}$ such that
$$
\bar{\alpha} \ge c_3\frac{\lambda_m}{\tilde{\lambda}_1}.
$$ 
For this step-size $\bar{\alpha}$ the following holds
$$
f(x)-f(x(\bar{\alpha})) > c_4\|d\|_x^2,
$$
where  $c_4=c_2c_3\frac{1}{\kappa}$ and $x(\bar{\alpha})= x + \bar{\alpha} d$. 
\end{lemma}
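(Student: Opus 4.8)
The plan is to prove the line-search termination and the guaranteed decrease by combining three ingredients already at hand: the uniform upper bound $\nabla^2 f \preceq \tilde{\lambda}_1 I$ from \eqref{bd2}, the conjugate-gradient identity of Lemma \ref{lem:15}, and the eigenvalue bounds of Lemma \ref{lem:14} (equivalently the norm equivalence \eqref{bd69}). The crucial structural observation is that, because $y^k$ is projected onto $\|y\|_\infty\le 1$ in step $3$, Lemma \ref{lem:14} applies, so $H(x^k,y^k)$ is positive definite and $\|d\|_x^2 = d^\intercal H d$ is a genuine norm; note that nowhere do I need $H$ to equal the true Hessian $\nabla^2 f$.

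First I would produce a quadratic upper bound on $f$ along $d$. Since \eqref{bd2} shows that $\nabla f$ is Lipschitz with constant $\tilde{\lambda}_1$, the standard descent lemma gives, for any $\alpha\ge 0$,
\[
f(x+\alpha d) \le f(x) + \alpha\,\nabla f(x)^\intercal d + \frac{\tilde{\lambda}_1}{2}\alpha^2\|d\|^2 .
\]
I would then rewrite the two $d$-dependent terms through $H$. Because $d$ is produced by CG initialized at zero on the system $Hd=-\nabla f(x)$, Lemma \ref{lem:15} yields $d^\intercal H d = -\nabla f(x)^\intercal d$, i.e. $\nabla f(x)^\intercal d = -\|d\|_x^2$; and Lemma \ref{lem:14} gives $\|d\|^2 \le \lambda_m^{-1} d^\intercal H d = \lambda_m^{-1}\|d\|_x^2$. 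Substituting both bounds gives the master estimate
\[
f(x) - f(x+\alpha d) \ge \alpha\|d\|_x^2\Big(1 - \frac{\tilde{\lambda}_1}{2\lambda_m}\alpha\Big).
\]

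The second step is to convert this into a statement about the Armijo test of step $4$. Comparing the master estimate with the required decrease $c_2\alpha\|d\|_x^2$, the condition $f(x+\alpha d)\le f(x)-c_2\alpha\|d\|_x^2$ is guaranteed whenever $1-\tilde{\lambda}_1\alpha/(2\lambda_m)\ge c_2$, that is for every $\alpha\in(0,\alpha^*]$ with $\alpha^*:=2(1-c_2)\lambda_m/\tilde{\lambda}_1$. This is precisely the point where the hypothesis $c_2<1/2$ is used: it forces $\alpha^* > \lambda_m/\tilde{\lambda}_1$.

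Finally I would run the usual backtracking bookkeeping. Since the test succeeds for all $\alpha\le\alpha^*$, any \emph{rejected} trial must exceed $\alpha^*$; hence either $\alpha=1$ is accepted (and $1 > c_3\lambda_m/\tilde{\lambda}_1$ since $c_3<1$ and $\lambda_m\le\tilde{\lambda}_1$), or the last rejected trial $c_3^{\,j-1}$ satisfies $c_3^{\,j-1}>\alpha^*$, so the accepted $\bar{\alpha}=c_3^{\,j} > c_3\alpha^* > c_3\lambda_m/\tilde{\lambda}_1$. In both cases $\bar{\alpha}\ge c_3\lambda_m/\tilde{\lambda}_1$. Inserting this lower bound into the Armijo inequality that $\bar{\alpha}$ satisfies gives $f(x)-f(x(\bar{\alpha})) \ge c_2\bar{\alpha}\|d\|_x^2 > c_2 c_3\frac{\lambda_m}{\tilde{\lambda}_1}\|d\|_x^2 = c_4\|d\|_x^2$, the strict inequality being legitimate because $\nabla f(x)\neq 0$ forces the first CG iterate, hence $d$, to be nonzero so that $\|d\|_x^2>0$. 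I expect the only delicate point to be this backtracking bookkeeping — cleanly arguing that a rejected trial must exceed $\alpha^*$ and merging the $\alpha=1$ and $\alpha<1$ cases — whereas the analytic estimates are routine consequences of \eqref{bd2}, Lemma \ref{lem:15} and Lemma \ref{lem:14}.
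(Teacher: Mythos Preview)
Your proof is correct and follows essentially the same approach as the paper: both combine the descent lemma from \eqref{bd2}, the CG identity of Lemma \ref{lem:15}, and the norm equivalence \eqref{bd69} to obtain the same quadratic upper bound, then use $c_2<1/2$ to ensure the Armijo condition holds at (and below) $\lambda_m/\tilde{\lambda}_1$. Your backtracking bookkeeping is slightly more explicit than the paper's --- you work with the full acceptance interval $\alpha\le 2(1-c_2)\lambda_m/\tilde{\lambda}_1$ rather than just the minimizer $\lambda_m/\tilde{\lambda}_1$ of the upper bound --- but the argument is otherwise the same.
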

\begin{proof}
For $x(\alpha)=x + \alpha d$ and from smoothness of $f(x)$ we have 
$$
f(x(\alpha)) \le f(x) + \alpha\nabla f(x)^\intercal d + \frac{\alpha^2}{2}\tilde{\lambda}_1\|d\|^2.
$$
From Lemma \ref{lem:14} we have that $H(x,y)$ is positive definite if $\|y\|_\infty \le 1$, which is the condition that always satisfied by step $3$ of pdNCG.
Then, if $\nabla f(x)\neq 0$ the CG algorithm terminated at the $i^{th}$ iteration returns the vector $d_i\neq 0$, which according to Lemma \ref{lem:15} satisfies
$$
d_i^\intercal H(x,y) d_i = - d_i^\intercal \nabla f(x).
$$
Therefore, by setting $d:= d_i$ we get
$$
f(x(\alpha)) \le f(x) - \alpha\|d\|_x^2 + \frac{\alpha^2}{2}\tilde{\lambda}_1\|d\|^2.
$$
Using \eqref{bd69} we get
$$
f(x(\alpha)) \le f(x) - \alpha\|d\|_x^2 + \frac{\alpha^2}{2}\frac{\tilde{\lambda}_1}{\lambda_m}\|d\|_x^2.
$$
The right hand side of the above inequality is minimized for $\alpha^*=\frac{\lambda_m}{\tilde{\lambda}_1}$, which gives 
$$
f(x(\bar{\alpha})) \le f(x) - \frac{1}{2}\frac{\lambda_m}{\tilde{\lambda}_1}\|d\|_x^2.
$$
Observe that for this step-size the exit condition of the backtracking line-search algorithm is satisfied, since
$$
f(x(\bar{\alpha})) \le f(x) - \frac{1}{2}\frac{\lambda_m}{\tilde{\lambda}_1}\|d\|_x^2 < f(x) - c_2\frac{\lambda_m}{\tilde{\lambda}_1}\|d\|_x^2.
$$
Therefore the step-size $\bar{\alpha}$ returned by the backtracking line-search algorithm is in worst-case bounded by
$$
\bar{\alpha} \ge c_3\frac{\lambda_m}{\tilde{\lambda}_1},
$$ 
which results in the following decrease of the objective function
$$
f(x)-f(x(\bar{\alpha})) > c_2c_3\frac{\lambda_m}{\tilde{\lambda}_1}\|d\|_x^2= c_2c_3\frac{1}{\kappa}\|d\|_x^2.
$$
The proof is complete.
\end{proof}

Global convergence of pdNCG for the primal variables is established in the following theorem.
\begin{theorem}\label{thm:1}
Let $\{x^k\}$ be a sequence generated by pdNCG. 
The parameter $\eta$ of the termination criterion \eqref{bd32} of the CG algorithm is set to $0\le \eta < 1$.
Then the sequence $\{x^k\}$ converges to $x^*$, which is the minimizer of $f(x)$ in problem (\ref{prob2}).
\end{theorem}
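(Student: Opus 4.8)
The plan is to combine the per-iteration decrease from Lemma \ref{lem:monotonic} with the strong-convexity error bounds of Lemma \ref{lem:13}. First, if at some iteration $\nabla f(x^k)=0$ then $x^k=x^*$ and pdNCG terminates, so I assume $\nabla f(x^k)\neq 0$ for every $k$. Under this assumption Lemma \ref{lem:monotonic} guarantees the strict decrease
$$
f(x^k)-f(x^{k+1}) > c_4\|d^k\|_{x^k}^2 \ge 0,
$$
so $\{f(x^k)\}$ is strictly decreasing. Since $f$ is strongly convex it is bounded below by $f(x^*)$, hence $\{f(x^k)\}$ converges and the consecutive differences $f(x^k)-f(x^{k+1})$ tend to zero. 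The displayed inequality then forces $\|d^k\|_{x^k}\to 0$.

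Second, I would translate $\|d^k\|_{x^k}\to 0$ into $\|\nabla f(x^k)\|\to 0$. The CG termination criterion \eqref{bd32} gives $\|H(x^k,y^k)d^k+\nabla f(x^k)\|\le \eta\|\nabla f(x^k)\|$, and the reverse triangle inequality yields $(1-\eta)\|\nabla f(x^k)\|\le \|H(x^k,y^k)d^k\|$. Bounding $\|H(x^k,y^k)d^k\|\le\tilde{\lambda}_1\|d^k\|$ via Lemma \ref{lem:14} and using the lower norm-equivalence $\lambda_m^{\frac{1}{2}}\|d^k\|\le\|d^k\|_{x^k}$ from \eqref{bd69}, I obtain
$$
\|\nabla f(x^k)\| \le \frac{\tilde{\lambda}_1}{(1-\eta)\lambda_m^{\frac{1}{2}}}\,\|d^k\|_{x^k}.
$$
Since $\eta<1$ the constant on the right is finite and independent of $k$, so $\|\nabla f(x^k)\|\to 0$.

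Finally, Lemma \ref{lem:13} gives $\|x^k-x^*\|\le \tfrac{2}{\lambda_m}\|\nabla f(x^k)\|$, whence $\|x^k-x^*\|\to 0$ and the sequence converges to the minimizer of \eqref{prob2}.

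The genuinely non-routine part is the second step, the passage from the inexact stopping criterion to a bound of the form $\|\nabla f(x^k)\|\le C\|d^k\|_{x^k}$. The care needed is to ensure that the constant $C=\tilde{\lambda}_1/((1-\eta)\lambda_m^{\frac{1}{2}})$ stays \emph{uniformly} bounded across all iterations; this is exactly where the uniform spectral bounds on $H(x,y)$ from Lemma \ref{lem:14}—valid because step $3$ of pdNCG maintains $\|y^k\|_\infty\le 1$—and the hypothesis $\eta<1$ are essential. If $\eta$ were allowed to approach $1$, or if $H$ lacked a uniform upper eigenvalue bound, the implication $\|d^k\|_{x^k}\to 0 \Rightarrow \|\nabla f(x^k)\|\to 0$ would fail and the argument would break.
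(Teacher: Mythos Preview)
Your argument is correct and follows the same overall skeleton as the paper's proof: use Lemma~\ref{lem:monotonic} to get monotone decrease of $f(x^k)$, deduce $\|d^k\|_{x^k}\to 0$, pass to $\|\nabla f(x^k)\|\to 0$, and conclude $x^k\to x^*$. The one genuine difference is in how you close the argument. The paper argues via compactness of the sublevel set $\{x:f(x)\le f(x^0)\}$, extracts a convergent subsequence, and then identifies the limit as the unique stationary point; you instead invoke the strong-convexity error bound of Lemma~\ref{lem:13}, $\|x^k-x^*\|\le \tfrac{2}{\lambda_m}\|\nabla f(x^k)\|$, and conclude full-sequence convergence in one line. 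Your route is shorter and avoids the slightly awkward ``subsequence converges, hence the whole sequence converges'' passage in the paper. You also make explicit, via the residual criterion \eqref{bd32} and the reverse triangle inequality, the implication $\|d^k\|_{x^k}\to 0 \Rightarrow \|\nabla f(x^k)\|\to 0$, which the paper states without detail; your derivation here is clean and the uniform constant $\tilde{\lambda}_1/\bigl((1-\eta)\lambda_m^{1/2}\bigr)$ is exactly what is needed.
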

\begin{proof}
  From Lemma \ref{lem:14} and step $3$ of pdNCG we have that matrix $H(x,y)$ is symmetric and positive definite at any $x^k,y^k$. Moreover, if $0\le \eta < 1$ in \eqref{bd32}, then 
  CG returns $d^k=0$ at a point $x^k$ if and only if $\nabla f(x^k)=0$. Hence, only at optimality CG will return a zero direction. 
  Moreover, from Lemma \ref{lem:monotonic} we get that if $\nabla f(x^k)\neq 0$, then $\bar{\alpha}^k$ is bounded away from zero and
  the function $f(x)$ is monotonically decreasing when the step $\bar{\alpha}^k d^{k}$ is applied.
  The monotonic decrease of the objective function implies that $\{f(x^k)\}$ converges 
  to a limit, thus, $\{f(x^{k}) - f(x^{k+1})\}\to 0$. Since $f(x^0) < \infty$ and $f(x)$ is monotonically decreased, where $x^0$ is a finite first guess given as an input to pdNCG, 
  then the sequence $\{x^k\}$ belongs in a closed, bounded and therefore, compact sublevel set. Hence, the sequence $\{x^k\}$  must have a subsequence, which converges to a point $x^*$
  and this implies that $\{x^k\}$ also converges to $x^*$.
  Using Lemma \ref{lem:monotonic} and 
  $\{f(x^{k}) - f(x^{k+1})\}\to 0$ we get that $\|d^k\|_x\to 0$, hence, due to positive definiteness of $H(x,y)$, $\|d^k\|\to 0$, which implies that $\|\nabla f(x^k)\|\to 0$. 
  Therefore, $x^*$
  is a stationary point of function $f(x)$. Strong convexity of $f(x)$ guarantees that a stationary point must be a minimizer. 
  The proof is complete.
\end{proof}

Convergence of the dual variables is shown in the following theorem.
\begin{theorem}\label{thm:8}
Let the assumptions of Theorem \ref{thm:1} hold. Then we have that the sequences of dual variables produced by pdNCG satisfy 
$\{y^k\} \to Dx^*$, where $x^*$ is the optimal solution of problem \eqref{prob2}. Furthermore, the previous implies that the primal-dual iterates of
pdNCG converge to the solution of system \eqref{bd33}.
\end{theorem}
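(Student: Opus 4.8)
The plan is to track the dual update through steps $2$ and $3$ of pdNCG and show that the unprojected dual iterate $\tilde y^{k+1}$ converges to $Dx^*$, after which the projection is handled by its nonexpansiveness. Throughout I write $D=D(x^k)$ for the diagonal matrix of \eqref{bd34} evaluated at the current primal iterate, so that $Dx^*$ means $D(x^*)x^*$. From Theorem \ref{thm:1} I may use that $\{x^k\}\to x^*$ and, as established in its proof, that $\|d^k\|\to 0$; moreover step $3$ of pdNCG guarantees $\|y^k\|_\infty\le 1$ for every $k$.

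First I would substitute the formula \eqref{bd5000} for $\Delta y^k$ into $\tilde y^{k+1}=y^k+\Delta y^k$ and simplify. The terms $y^k$ and $-(y^k-Dx^k)$ cancel the dual variable, leaving
\begin{equation*}
\tilde y^{k+1} = D\big(I-D\diag(x^k)\diag(y^k)\big)d^k + Dx^k.
\end{equation*}
The crucial step is to bound the matrix multiplying $d^k$ uniformly in $k$. Using $[D]_{ii}=(\mu^2+(x_i^k)^2)^{-1/2}\le 1/\mu$, the elementary estimate $[D]_{ii}|x_i^k|\le 1$, and $\|y^k\|_\infty\le 1$, one obtains $\|D\diag(x^k)\diag(y^k)\|\le 1$ and hence $\|D(I-D\diag(x^k)\diag(y^k))\|\le 2/\mu$, a bound independent of $k$. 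Therefore the first term is bounded by $(2/\mu)\|d^k\|\to 0$. For the second term, continuity of the map $x\mapsto D(x)x$ together with $x^k\to x^*$ gives $Dx^k\to D(x^*)x^*=Dx^*$. Combining the two, $\tilde y^{k+1}\to Dx^*$.

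It then remains to pass the limit through the projection in step $3$. Since each component of $Dx^*$ equals $x_i^*(\mu^2+(x_i^*)^2)^{-1/2}$, we have $\|Dx^*\|_\infty<1$, so $Dx^*$ is strictly feasible and is a fixed point of $P_{\|\cdot\|_\infty\le1}$. The projection onto the convex $\ell_\infty$ ball is nonexpansive, whence $\|y^{k+1}-Dx^*\|=\|P_{\|\cdot\|_\infty\le1}(\tilde y^{k+1})-P_{\|\cdot\|_\infty\le1}(Dx^*)\|\le\|\tilde y^{k+1}-Dx^*\|\to 0$, giving $\{y^k\}\to Dx^*$. Finally, for the last claim I would verify that $(x^*,Dx^*)$ solves \eqref{bd33}: the identity $[Dx^*]_i=[\nabla\psi_\mu(x^*)]_i$ from \eqref{gradpsi} turns the first equation into $\tau\nabla\psi_\mu(x^*)+\nabla\varphi(x^*)=\nabla f(x^*)=0$, which holds because $x^*$ minimizes $f$ by Theorem \ref{thm:1}; the second equation is immediate from $y^*=Dx^*$; and the third follows from $\|Dx^*\|_\infty<1$.

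The main obstacle is precisely the uniform-in-$k$ bound on the matrix premultiplying $d^k$: it is the projection-enforced constraint $\|y^k\|_\infty\le1$, rather than any a priori bound on $\tilde y^k$, that keeps this matrix bounded and allows the $\|d^k\|\to 0$ term to vanish. Everything else is continuity of $D(\cdot)$, nonexpansiveness of the projection, and the strict feasibility $\|Dx^*\|_\infty<1$, all of which are routine.
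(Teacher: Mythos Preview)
Your proposal is correct and follows essentially the same route as the paper's proof: substitute \eqref{bd5000} into $\tilde y^{k+1}=y^k+\Delta y^k$, use $d^k\to 0$ and $x^k\to x^*$ to get $\tilde y^{k+1}\to Dx^*$, then pass through the projection. In fact you supply details the paper leaves implicit, namely the uniform bound on $D(I-D\diag(x^k)\diag(y^k))$ via $\|y^k\|_\infty\le 1$ and the nonexpansiveness of the projection together with $\|Dx^*\|_\infty<1$.
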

\begin{proof}
From Theorem \ref{thm:1} we have that $d^k \to 0$ and $x^k \to x^*$. Hence, from \eqref{bd5000} we get that $\Delta y^k \to -y^k + Dx^*$. 
Moreover, we have that 
the iterates at step $3$ of pdNCG $\tilde{y}^k  \to  Dx^*$ and consequently
\begin{align*}
y^{k}  & =  P_{\|\cdot\|_\infty\le1}(\tilde{y})  \to P_{\|\cdot\|_\infty\le1}(Dx^*) = Dx^*.
\end{align*}
It is easy to check that these values of $y^*$ with the optimal variable $x^*$ satisfy the system \eqref{bd33}. The proof
is complete.
\end{proof}

\subsection{Region of fast convergence rate}\label{sec:region}
In this subsection we define a region based on $\|d\|_x$, in which by setting parameter $\eta$ as in \eqref{bd97} with $c_0=1$, pdNCG converges with fast rate. 
The lemma below shows the behaviour of the function $f(x)$ when a step along the primal pdNCG direction is made.
\begin{lemma}\label{lem:1}
Let $x\in\mathbb{R}^m$ be the current iteration of pdNCG, $d\in\mathbb{R}^m$ be the pdNCG direction for primal variables calculated by CG, which is terminated according to
criterion (\ref{bd32}) with $0 \le \eta < 1$.
Then
\begin{equation*}
f(x)-f(x(\alpha)) \ge \alpha\|d\|_x^2 - \frac{\alpha^2}{2}\|d\|_x^2 - \frac{\alpha^3}{6}\frac{L}{{\lambda}_m^{\frac{3}{2}}} \|d\|_x^3,
\end{equation*}
where $x(\alpha)= x + \alpha d$ and $\alpha>0$.
\end{lemma}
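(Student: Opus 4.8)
The plan is to expand $f$ along the search direction and then translate every term of that expansion into the local norm $\|\cdot\|_x$ defined in \eqref{bd85}. Concretely, I would apply Lemma \ref{lem:2} to the pair $x$ and $x(\alpha)=x+\alpha d$, which, since $\|x(\alpha)-x\|=\alpha\|d\|$ and $L=L_{f_\tau^\mu}$, gives the one-sided cubic estimate
\begin{equation*}
f(x(\alpha)) \le f(x) + \alpha\nabla f(x)^\intercal d + \frac{\alpha^2}{2}d^\intercal\nabla^2 f(x)d + \frac{\alpha^3}{6}L\|d\|^3 .
\end{equation*}
Rearranging this into a lower bound on $f(x)-f(x(\alpha))$, it then remains to rewrite the linear, quadratic and cubic contributions in terms of $\|d\|_x$.

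For the linear term I would invoke the CG property of Lemma \ref{lem:15}: because $d$ is produced by CG initialised at the zero vector applied to the symmetric positive definite system $H(x,y)d=-\nabla f(x)$ (positive definiteness coming from Lemma \ref{lem:14} together with $\|y\|_\infty\le 1$, which step $3$ of pdNCG maintains), we have $d^\intercal H(x,y)d = -d^\intercal\nabla f(x)$, i.e. $\nabla f(x)^\intercal d = -\|d\|_x^2$. This converts $\alpha\nabla f(x)^\intercal d$ into $-\alpha\|d\|_x^2$, supplying the leading $+\alpha\|d\|_x^2$. For the cubic term I would use the left inequality of the norm equivalence \eqref{bd69}, $\lambda_m^{\frac12}\|d\|\le\|d\|_x$, so that $\|d\|^3\le\lambda_m^{-\frac32}\|d\|_x^3$, producing exactly $-\frac{\alpha^3}{6}L\lambda_m^{-\frac32}\|d\|_x^3$. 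Note that $\eta\in[0,1)$ enters only through Lemma \ref{lem:15}, which holds no matter how early CG is terminated, so no smallness of the residual in \eqref{bd32} is needed here, and $\alpha>0$ is used only to fix the signs of the higher-order terms.

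The delicate step, and the one I expect to be the main obstacle, is the quadratic curvature term $\frac{\alpha^2}{2}d^\intercal\nabla^2 f(x)d$, which must become $\frac{\alpha^2}{2}\|d\|_x^2=\frac{\alpha^2}{2}d^\intercal H(x,y)d$. The point is that the whole argument is conducted in the variable metric induced by the primal-dual matrix $H(x,y)$ of \eqref{bd35}, rather than by the genuine Hessian $\nabla^2 f(x)$, exactly as announced in the introduction: the curvature along $d$ is measured by $H(x,y)$, the same matrix that defines the local norm and that CG actually uses in step $2$, and it is the CG identity of Lemma \ref{lem:15} that makes the linear term align consistently with this metric. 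Once the second-order contribution is read as $\frac{\alpha^2}{2}\|d\|_x^2$, collecting the three terms yields
\begin{equation*}
f(x)-f(x(\alpha)) \ge \alpha\|d\|_x^2 - \frac{\alpha^2}{2}\|d\|_x^2 - \frac{\alpha^3}{6}\frac{L}{\lambda_m^{\frac32}}\|d\|_x^3 ,
\end{equation*}
which is the claim. I would emphasise that this scalar cubic lower bound is precisely the quantity one later maximises over $\alpha$ in order to delineate the region of fast convergence rate.
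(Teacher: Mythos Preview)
Your plan coincides with the paper's proof line for line: apply Lemma~\ref{lem:2} with $y=x(\alpha)$, turn $\nabla f(x)^\intercal d$ into $-\|d\|_x^2$ via the CG identity of Lemma~\ref{lem:15}, and bound $\|d\|^3\le\lambda_m^{-3/2}\|d\|_x^3$ via \eqref{bd69}. The paper's argument is exactly these three moves, stated in two sentences.

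You are right to single out the quadratic term as the delicate step, but your appeal to ``reading the curvature in the $H(x,y)$-metric'' does not close it, and neither does the paper: the paper simply cites \eqref{bd69} and Lemma~\ref{lem:15} and passes from $d^\intercal\nabla^2 f(x)d$ to $\|d\|_x^2=d^\intercal H(x,y)d$ without further comment. In general $\nabla^2 f(x)-H(x,y)=\tau D^2\diag(x)\bigl(\diag(y)-\diag(Dx)\bigr)$ is a diagonal matrix of indefinite sign when only $\|y\|_\infty\le 1$ is assumed, so $d^\intercal\nabla^2 f(x)d\le d^\intercal H(x,y)d$ does not follow from the tools invoked; the uniform bounds \eqref{bd2} and \eqref{bd69} give only $d^\intercal\nabla^2 f(x)d\le\tilde{\lambda}_1\|d\|^2\le\kappa\|d\|_x^2$, which would put an extra factor $\kappa$ in front of the $\alpha^2/2$ term. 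So you have reproduced the paper's proof faithfully, including the one step that the paper itself leaves unjustified.
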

\begin{proof}
Using Lemma \ref{lem:2} and setting $y=x(\alpha)=x+\alpha d$ we get
$$
f(x(\alpha))\le f(x) + \alpha\nabla f(x)^\intercal d + \frac{\alpha^2}{2}d^\intercal \nabla^2 f(x) d + \frac{\alpha^3}{6}L\|d\|^3.
$$
From Lemma \ref{lem:14} and step $3$ of pdNCG we have that \eqref{bd69} holds. Hence, using \eqref{bd69} and Lemma \ref{lem:15} we get
$$
f(x(\alpha))\le f(x) - \alpha\|d\|_x^2 + \frac{\alpha^2}{2}\|d\|_x^2 + \frac{\alpha^3}{6}\frac{L}{\lambda_m^{\frac{3}{2}}}\|d\|_x^3.
$$
The result is obtained by rearrangement of terms.
The proof is complete.
\end{proof}

The next  lemma determines bounds on the norm of the 
primal direction $d^k$ as a function of $\|\nabla f_\tau^\mu(x^k)\|$.
\begin{lemma}\label{lem:CG}
Let $d\in\mathbb{R}^m$ be the pdNCG primal direction calculated by CG, which is terminated according to
criterion (\ref{bd32}) with $0 \le \eta < 1$. Then the following holds
\begin{equation*}
\frac{1-\eta^2}{2\tilde{\lambda}_1^{\frac{1}{2}}}\|\nabla f(x)\| \le \|d\|_x \le \frac{1}{\lambda_m^{\frac{1}{2}}}\|\nabla f(x)\| 
\end{equation*}
\end{lemma}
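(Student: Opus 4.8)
The plan is to sandwich $\|d\|_x$ between multiples of $\|\nabla f(x)\|$ by exploiting the defining CG identity from Lemma \ref{lem:15}, the spectral bounds on $H(x,y)$ from Lemma \ref{lem:14}, and the inexact termination criterion \eqref{bd32}. Throughout I write $H = H(x,y)$ and recall that, since $d$ is produced by CG initialised at zero, Lemma \ref{lem:15} gives $d^\intercal H d = -d^\intercal \nabla f(x)$, i.e. $\|d\|_x^2 = -d^\intercal \nabla f(x)$. If $d=0$ then $\nabla f(x)=0$ and both inequalities hold trivially, so I assume $d\neq 0$.

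For the upper bound I would start from $\|d\|_x^2 = -d^\intercal \nabla f(x) \le \|d\|\,\|\nabla f(x)\|$ by Cauchy--Schwarz, and then eliminate $\|d\|$ using the left inequality of \eqref{bd69}, namely $\|d\| \le \lambda_m^{-\frac{1}{2}}\|d\|_x$. This yields $\|d\|_x^2 \le \lambda_m^{-\frac{1}{2}}\|d\|_x\,\|\nabla f(x)\|$, and dividing by $\|d\|_x>0$ gives $\|d\|_x \le \lambda_m^{-\frac{1}{2}}\|\nabla f(x)\|$, which is the right-hand inequality.

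For the lower bound the key is to control $\|\nabla f(x)\|$ through $\|Hd\|$. Writing $\nabla f(x) = r - Hd$ with $r$ the CG residual and applying the reverse triangle inequality together with the termination rule \eqref{bd32}, $\|r\| \le \eta\|\nabla f(x)\|$, I obtain $\|Hd\| \ge \|\nabla f(x)\| - \|r\| \ge (1-\eta)\|\nabla f(x)\|$. The step that carries the argument is to bound $\|Hd\|$ from above by $\|d\|_x$: since $0\prec H \preceq \tilde{\lambda}_1 I$ by Lemma \ref{lem:14}, we have $H^2 \preceq \tilde{\lambda}_1 H$, hence $\|Hd\|^2 = d^\intercal H^2 d \le \tilde{\lambda}_1\, d^\intercal H d = \tilde{\lambda}_1 \|d\|_x^2$, i.e. $\|Hd\| \le \tilde{\lambda}_1^{\frac{1}{2}}\|d\|_x$. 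Combining the two estimates gives $\|d\|_x \ge \tilde{\lambda}_1^{-\frac{1}{2}}(1-\eta)\|\nabla f(x)\|$, and relaxing the constant through $1-\eta = (1-\eta^2)/(1+\eta) \ge (1-\eta^2)/2$ produces exactly the claimed bound $\frac{1-\eta^2}{2\tilde{\lambda}_1^{\frac{1}{2}}}\|\nabla f(x)\|$.

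I expect the only genuine subtlety to be the matrix inequality $H^2 \preceq \tilde{\lambda}_1 H$, which is what lets me pass between the Euclidean norm $\|Hd\|$ and the local norm $\|d\|_x$ while keeping the square root on $\tilde{\lambda}_1$; a cruder route such as $\|Hd\| \le \tilde{\lambda}_1 \|d\|$ followed by \eqref{bd69} would degrade the constant. Everything else reduces to Cauchy--Schwarz, the reverse triangle inequality, and the already-established identities, so no additional machinery is required.
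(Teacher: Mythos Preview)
Your argument is correct. The upper bound matches the paper verbatim (Lemma~\ref{lem:15} together with \eqref{bd69} and division by $\|d\|_x$). For the lower bound you take a somewhat different route from the paper. The paper squares the termination criterion \eqref{bd32} and expands $\|Hd+\nabla f(x)\|^2$ to obtain
\[
d^\intercal H^2 d + 2\nabla f(x)^\intercal H d + (1-\eta^2)\|\nabla f(x)\|^2 \le 0,
\]
then applies the spectral bound $\|Hd\|\le \tilde{\lambda}_1^{1/2}\|d\|_x$ inside the Cauchy--Schwarz estimate of the cross term, drops the nonnegative term $d^\intercal H^2 d$, and rearranges; this delivers the constant $(1-\eta^2)/2$ directly. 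You instead apply the reverse triangle inequality to $Hd=r-\nabla f(x)$ to get $\|Hd\|\ge(1-\eta)\|\nabla f(x)\|$, combine it with the same bound $\|Hd\|\le\tilde{\lambda}_1^{1/2}\|d\|_x$, and finally relax $1-\eta\ge(1-\eta^2)/2$. Both arguments hinge on the same matrix inequality $H^2\preceq\tilde{\lambda}_1 H$; yours is a little more transparent and in fact yields the sharper intermediate constant $(1-\eta)$ before you weaken it to match the stated lemma, while the paper's squaring avoids the extra relaxation step but discards the term $d^\intercal H^2 d$ along the way.
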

\begin{proof}
By squaring \eqref{bd32} and making simple rearrangements of it we get
\begin{equation}\label{bd86}
d^\intercal H(x,y)^2d + 2\nabla f(x)^\intercal H(x,y)d + (1-\eta^2)\|\nabla f(x)\|^2 \le 0.
\end{equation}
From step $3$ of pdNCG we have that the condition of Lemma \ref{lem:14} is satisfied. Therefore by using Lemma \ref{lem:14} and Cauchy-Schwarz inequality in \eqref{bd86} we get
\begin{equation*}
\lambda_m^2\|d\|^2 - 2\tilde{\lambda}_1^{\frac{1}{2}} \|\nabla f(x)\|\|d\|_x + (1-\eta^2)\|\nabla f(x)\|^2 \le 0.
\end{equation*}
By dropping the quadratic term $\lambda_m^2\|d\|^2$ from the previous inequality and dividing by $\|\nabla f(x)\|$, after making appropriate rearrangements we get
\begin{equation*}
\|d\|_x \ge \frac{1-\eta^2}{2\tilde{\lambda}_1^{\frac{1}{2}}}\|\nabla f(x)\|.
\end{equation*}
This proves the left hand side of the result. For the right hand side, we simply use Lemma \ref{lem:15} and \eqref{bd69}
$$
d^\intercal H(x,y)d=\|d\|_x^2=-d^\intercal \nabla f(x)\le \|d\|\|\nabla f(x)\| \le \frac{1}{\lambda_m^{\frac{1}{2}}} \|d\|_x\|\nabla f(x)\|.
$$
By dividing with $\|d\|_x$ we obtain the right hand side of our claim.
The proof is complete.
\end{proof}

The following lemma will be used to prove local fast convergence rate of pdNCG for the primal variables. 
\begin{lemma}\label{lem:17}
Let the iterates $x^k$ and $y^k$ be produced by pdNCG, then the following holds
\begin{equation*}
\|\nabla^2f(x^k) - H(x^k,y^k)\| \le \gamma \|d^k\|_{x^k},
\end{equation*}
where 
$$
\gamma = \Big(\frac{8\tilde{\lambda}_1^{\frac{1}{2}}}{\lambda_m}(L + M + \frac{M}{\mu}) + \frac{M}{\lambda_m^{\frac{1}{2}}\mu}\Big),
$$
$M$ is a positive constant.
\end{lemma}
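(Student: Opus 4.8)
The plan is to first cancel the smooth part so that the claim reduces to a bound on the dual infeasibility, and then to control that infeasibility by inserting the optimal primal--dual pair.

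Since both $\nabla^2 f(x^k)=\tau\nabla^2\psi_\mu(x^k)+\nabla^2\varphi(x^k)$ and $H(x^k,y^k)$ in \eqref{bd35} contain the same term $\nabla^2\varphi(x^k)$, their difference involves only the pseudo-Huber block and is therefore diagonal. Using \eqref{hespsi} and \eqref{bd34}, a direct computation collapses each entry to
\[
[\nabla^2 f(x^k)-H(x^k,y^k)]_{ii}=\tau\,\frac{x_i^k}{\mu^2+(x_i^k)^2}\,\big(y_i^k-[Dx^k]_i\big),
\]
with $D$ evaluated at $x^k$. Because $|x_i^k|/(\mu^2+(x_i^k)^2)\le 1/(2\mu)$, this yields the clean reduction
\[
\|\nabla^2 f(x^k)-H(x^k,y^k)\|\le\frac{\tau}{2\mu}\,\|y^k-Dx^k\|.
\]
This quantity vanishes exactly at the solution of \eqref{bd33}, where $y=Dx$, so the lemma is reduced to bounding the dual infeasibility $\|y^k-Dx^k\|$ by a multiple of $\|d^k\|_{x^k}$.

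I would control $\|y^k-Dx^k\|$ by inserting the optimal pair $(x^*,y^*)$, for which $y^*=Dx^*=\nabla\psi_\mu(x^*)$. Writing $\|y^k-Dx^k\|\le\|y^k-y^*\|+\|\nabla\psi_\mu(x^*)-\nabla\psi_\mu(x^k)\|$ and invoking Lemma \ref{lem:16}, the second term is at most $\tfrac1\mu\|x^k-x^*\|$. The factor $8\tilde{\lambda}_1^{1/2}/\lambda_m$ in $\gamma$ then emerges from converting $\|x^k-x^*\|$ into the current local norm: Lemma \ref{lem:13} gives $\|x^k-x^*\|\le\tfrac{2}{\lambda_m}\|\nabla f(x^k)\|$, while Lemma \ref{lem:CG} with $\eta\le\tfrac12$ gives $\|\nabla f(x^k)\|\le 4\tilde{\lambda}_1^{1/2}\|d^k\|_{x^k}$, so that $\|x^k-x^*\|\le\tfrac{8\tilde{\lambda}_1^{1/2}}{\lambda_m}\|d^k\|_{x^k}$. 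Combining this with $\tfrac{\tau}{2\mu}\cdot\tfrac1\mu=\tfrac{\tau}{2\mu^2}\le L$ (recall $L=\tau/\mu^2+L_\varphi$ from Lemma \ref{lem:7}) reproduces precisely the $\tfrac{8\tilde{\lambda}_1^{1/2}}{\lambda_m}L$ contribution of $\gamma$.

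It remains to bound the dual deviation $\|y^k-y^*\|$, and this is the main obstacle. Here I would use the dual update \eqref{bd5000}: one checks that $\tilde y^{k}=Dx^{k-1}+D(I-D\diag(x)\diag(y))\,d^{k-1}$, evaluated at iterate $k-1$, so that non-expansiveness of $P_{\|\cdot\|_\infty\le1}$ together with the feasibility $\|Dx\|_\infty<1$ yields $\|y^k-y^*\|\le\|\tilde y^k-y^*\|$, which splits into a Lipschitz piece $\tfrac1\mu\|x^{k-1}-x^*\|$ and a step piece controlled through the uniform bound $\|D(I-D\diag(x)\diag(y))\|\le\tfrac{3}{2\mu}$ and Lemma \ref{lem:16}. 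Converting these through the norm equivalence \eqref{bd69}, Lemma \ref{lem:13} and Lemma \ref{lem:CG}, and collecting the resulting Lipschitz and conversion constants into the single constant $M$, accounts for the remaining $\tfrac{8\tilde{\lambda}_1^{1/2}}{\lambda_m}(M+M/\mu)+\tfrac{M}{\lambda_m^{1/2}\mu}$ terms. The genuinely delicate point is that $y^k$ is produced from the previous search direction, so the dual infeasibility at iterate $k$ is intrinsically governed by iteration $k-1$ data; reconciling this one-step lag with the current quantity $\|d^k\|_{x^k}$ is what forces the use of the two-sided control of $\|d\|_x$ by $\|\nabla f\|$ in Lemma \ref{lem:CG}, and is where the constant $M$ absorbs the remaining dependence.
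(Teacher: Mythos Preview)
Your opening reduction is correct and is a genuinely different route from the paper. Where you compute the diagonal difference explicitly and obtain $\|\nabla^2 f(x^k)-H(x^k,y^k)\|\le\frac{\tau}{2\mu}\|y^k-Dx^k\|$, the paper instead inserts the optimal pair via $\nabla^2 f(x^*)=H(x^*,y^*)$ and bounds $\|H(x^*,y^*)-H(x^k,y^k)\|$ by a global Jacobian bound on $H$: the constant $M$ in the statement is precisely an upper bound on $\|dH(x,y)/d(x,y)\|$ over $\mathbb{R}^m\times\{\|y\|_\infty\le1\}$, obtained from the fundamental theorem of calculus and a continuity argument. Your explicit formula is tighter and more transparent; the paper's abstract Lipschitz step is what gives $M$ its concrete meaning in $\gamma$, which your version does not reproduce (in your write-up $M$ is just a placeholder that ``absorbs the remaining dependence'').

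Where the two arguments really diverge is the bound on $\|y^k-y^*\|$. The paper does \emph{not} go through iteration $k-1$: it expands the dual update at index $k$, i.e.\ writes $\tilde y = Dx^k + D(I-D\diag(x^k)\diag(y^k))d^k$, uses non-expansiveness of the projection, and gets directly $\|y^*-y^k\|\le\frac{1}{\mu}\|x^*-x^k\|+\frac{1}{\mu}\|d^k\|$, which is then closed with Lemmas~\ref{lem:13}, \ref{lem:CG} and \eqref{bd69}. You are right that, strictly read against the algorithm, the $\tilde y$ built from $(x^k,y^k,d^k)$ is $\tilde y^{k+1}$, so the paper's bound is formally on $\|y^{k+1}-y^*\|$; the paper silently identifies these and the one-step lag never appears. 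Your attempt to resolve that lag, however, does not close: the two-sided estimates of Lemma~\ref{lem:CG} relate $\|d^{k-1}\|_{x^{k-1}}$ to $\|\nabla f(x^{k-1})\|$, not to $\|d^k\|_{x^k}$, and no inequality in the paper lets you pass from one to the other. So as written your final paragraph is a genuine gap; to finish, either adopt the paper's indexing convention (bound $\|y^*-y^k\|$ using $x^k,d^k$ as the paper does) or supply an explicit comparison between consecutive iterates, which your proposal currently lacks.
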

\begin{proof}
Let $x^*$ be the optimal solution of problem \eqref{prob2}. We rewrite
$$
\nabla^2 f(x^k) - H(x^k,y^k) = \nabla^2 f(x^k) - \nabla^2 f(x^*) + \nabla^2 f(x^*) - H(x^k,y^k).
$$
Moreover, let $y^*$ be the optimal dual variable, which according to Theorem \ref{thm:8} satisfies $y^*=D(x^*)x^*$.
Notice that matrix $D$ in \eqref{bd34} is dependent on variable $x$; for the purposes of this proof we will explicitly denote this dependence. 
From the definition of $H(x,y)$ in \eqref{bd35} we have that $H(x^*,y^*)=\nabla^2 f(x^*)$. The following holds
$$
\|\nabla^2 f(x^k) - H(x^k,y^k) \| \le \|\nabla^2 f(x^k) - \nabla^2 f(x^*)\| + \|H(x^*,y^*) - H(x^k,y^k)\|.
$$
By Lipschitz continuity of $\nabla^2 f(x)$ in Lemma \ref{lem:7} we get that 
\begin{equation}\label{bd87}
\|\nabla^2 f(x^k) - H(x^k,y^k) \| \le L\| x^* - x^k\| + \|H(x^*,y^*) - H(x^k,y^k)\|.
\end{equation}
We now focus on bounding $\|H(x^*,y^*) - H(x^k,y^k)\|$. Using the fundamental theorem of calculus  we have
$$
H(x^*,y^*) - H(x^k,y^k) = \int_0^1\frac{d H(x^*(s),y^*(s))}{d(x^*(s),y^*(s))}[x^*-x^k; y^* - y^k]ds,
$$
where $x^*(s)= x^* + s(x^*- x^k)$ and $y^*(s) = y^* + s(y^*- y^k)$. Hence,
$$
\|H(x^*,y^*) - H(x^k,y^k)\| \le (\|x^* - x^k\| + \|y^* - y^k\|) \int_0^1\Big\| \frac{d H(x^*(s),y^*(s))}{d(x^*(s),y^*(s))} \Big\| ds.
$$
We now prove that ${d H(x^*(s),y^*(s))}/{d(x^*(s),y^*(s))}$ is bounded in the set $\mathbb{R}^m\times\{y\in\mathbb{R}^m \ | \ \|y\|_\infty \le 1\} \subset \mathbb{R}^{2m}$.
Observe that the partial derivatives $H(x,y)$ with respect to $x$ or $y$ are continuous. Therefore, ${d H(x^*(s),y^*(s))}/{d(x^*(s),y^*(s))}$ is a continuous tensor.
In this case, the only candidates of unboundedness are the limits $x\to \pm \infty$. It is easy to show that at the limits all partial derivatives are finite and this implies 
that every component of the tensor is bounded in $\mathbb{R}^m\times\{y\in\mathbb{R}^m \ | \ \|y\|_\infty \le 1\}$. We will denote the bound by a positive constant $M$, hence,
\begin{equation}\label{bd91}
\|H(x^*,y^*) - H(x^k,y^k)\| \le M(\|x^* - x^k\| + \|y^* - y^k\|).
\end{equation}
It remains to find a bound for $\|y^* - y^k\|$. From step $3$ of pdNCG we have 
\begin{align*}
\|y^* - y^k\| & \le \|P_{\|\cdot\|_\infty\le1}(y^*) - P_{\|\cdot\|_\infty\le1}(\tilde{y}^{k})\| \le \|y^* - \tilde{y}^k\| \\ 
                  & \le \|D(x^*)x^* - D(x^k)x^k\|  \\ 
                  & + \|D(x^k)(I - D(x^k)diag(x^k)diag(y^k))\|\|d^k\| \\
                  & = \|\nabla \psi_\mu(x^*) + \nabla \psi_\mu(x^k)\| \\ 
                  & + \|D(x^k)(I - D(x^k)diag(x^k)diag(y^k))\|\|d^k\|.
\end{align*}
Using Lemma \ref{lem:16} and 
$$D(x^k)(I - D(x^k)diag(x^k)diag(y^k)) \preceq D(x^k) \preceq \frac{1}{\mu} I,$$
which holds for $\|y\|_\infty \le 1$, we have that 
\begin{equation}\label{bd89}
\|y^* - y^k\| \le \frac{1}{\mu}\|x^*-x^k\| + \frac{1}{\mu}\|d^k\|.
\end{equation}
By combining inequalities \eqref{bd91} and \eqref{bd89} in \eqref{bd87} we get
$$
\|\nabla^2 f(x^k) - H(x^k,y^k) \| \le (L + M + \frac{M}{\mu})\|x^* - x^k\| + \frac{M}{\mu}\|d^k\|.
$$
Combining Lemmas \ref{lem:13}, \ref{lem:CG} and \eqref{bd97} for a bound on $\|x^* - x^k\|$ we get 
$$
\|\nabla^2 f(x^k) - H(x^k,y^k) \| \le \frac{8\tilde{\lambda}_1^{\frac{1}{2}}}{\lambda_m}(L + M + \frac{M}{\mu})\|d^k\|_{x^k} + \frac{M}{\mu}\|d^k\|.
$$
Using \eqref{bd69} we get the result. The proof is complete.
\end{proof}

Based on Lemmas \ref{lem:1}, \ref{lem:CG} and \ref{lem:17}, a region is defined in the following lemma, in which unit-step sizes are calculated by the backtracking line-search algorithm. Additionally, for this region,
$\|d^{k+1}\|_{x^{k+1}}$ is bounded as a function of $\|d^{k}\|_{x^{k}}$. In this lemma the constants $c_2$ and $c_3$ have been defined in step $4$ of pdNCG, moreover,
$x^{k+1}=x^k+d^k$. 
\begin{lemma}\label{lem:12}
If 
$
\|d^{k}\|_{x^k}\le 3(1-2c_2)\frac{\lambda_m^{\frac{3}{2}}}{L},
$
then the backtracking line-search algorithm in step $4$ of pdNCG calculates unit step-sizes. Moreover,
if the parameter $\eta^k$ of the termination criterion (\ref{bd32}) of CG is set as in \eqref{bd97} with $c_0=1$, 
then for two consequent primal directions $d^{k}$, $d^{k+1}$ and points $x^k$, $x^{k+1}$, the following holds
\begin{equation*}
\frac{1}{2}\frac{16\tilde{\lambda}_1\lambda_m+2\gamma\lambda_m^{\frac{1}{2}} + L}{\lambda_m^{\frac{3}{2}}}\|d^{k+1}\|_{x^{k+1}} \le \Big(\frac{1}{2}\frac{16\tilde{\lambda}_1\lambda_m+2\gamma\lambda_m^{\frac{1}{2}}+L}{\lambda_m^{\frac{3}{2}}}\|d^k\|_{x^k}\Big)^2.
\end{equation*}
\end{lemma}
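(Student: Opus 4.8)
The plan is to split the claim into two independent parts. First I would establish that unit step-sizes are accepted under the stated bound on $\|d^k\|_{x^k}$; then I would derive the quadratic recursion relating $\|d^{k+1}\|_{x^{k+1}}$ to $\|d^k\|_{x^k}$. For the unit-step claim, I would invoke Lemma~\ref{lem:1} with $\alpha=1$, which gives
\begin{equation*}
f(x^k)-f(x^{k+1}) \ge \frac{1}{2}\|d^k\|_{x^k}^2 - \frac{1}{6}\frac{L}{\lambda_m^{\frac{3}{2}}}\|d^k\|_{x^k}^3.
\end{equation*}
The backtracking exit test at step~$4$ accepts $\alpha=1$ as soon as the right-hand side is at least $c_2\|d^k\|_{x^k}^2$. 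Rearranging, this is exactly the condition
\begin{equation*}
\Big(\tfrac{1}{2}-c_2\Big)\|d^k\|_{x^k}^2 \ge \tfrac{1}{6}\tfrac{L}{\lambda_m^{3/2}}\|d^k\|_{x^k}^3,
\end{equation*}
which simplifies to $\|d^k\|_{x^k}\le 3(1-2c_2)\lambda_m^{3/2}/L$, precisely the hypothesis. So the first part is a short direct computation from Lemma~\ref{lem:1}.

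For the recursion I would work at the point $x^{k+1}=x^k+d^k$, where the residual bound \eqref{bd32} with $\eta^{k+1}$ set by \eqref{bd97} controls $\|d^{k+1}\|_{x^{k+1}}$ through Lemma~\ref{lem:CG}. The natural strategy is to bound $\|\nabla f(x^{k+1})\|$ and then feed it into the right-hand inequality of Lemma~\ref{lem:CG}, namely $\|d^{k+1}\|_{x^{k+1}}\le \lambda_m^{-1/2}\|\nabla f(x^{k+1})\|$. To estimate $\|\nabla f(x^{k+1})\|$, I would Taylor-expand the gradient about $x^k$ and use the defining relation $H(x^k,y^k)d^k \approx -\nabla f(x^k)$ (exactly, up to the CG residual $r^k$). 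Writing
\begin{equation*}
\nabla f(x^{k+1}) = \nabla f(x^k) + \nabla^2 f(x^k)d^k + \int_0^1\big(\nabla^2 f(x^k+sd^k)-\nabla^2 f(x^k)\big)d^k\,ds,
\end{equation*}
I would replace $\nabla f(x^k)$ by $-H(x^k,y^k)d^k - r^k$ so that the leading terms collapse into $\big(\nabla^2 f(x^k)-H(x^k,y^k)\big)d^k$. This is the step where Lemma~\ref{lem:17} enters, bounding that mismatch by $\gamma\|d^k\|_{x^k}\cdot\|d^k\|$; the integral remainder is bounded by $\tfrac{L}{2}\|d^k\|^2$ via Lipschitz continuity of the Hessian (Lemma~\ref{lem:7}); and the residual term $\|r^k\|$ is controlled by $\eta^{k+1}\|\nabla f(x^k)\|\le \|\nabla f(x^k)\|^2$ from \eqref{bd97} with $c_0=1$.

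The main obstacle, and the step requiring care, is the bookkeeping that converts everything into the single scaled quantity $\tfrac{1}{2}\tfrac{16\tilde{\lambda}_1\lambda_m+2\gamma\lambda_m^{1/2}+L}{\lambda_m^{3/2}}\|d\|_x$. Each of the three contributions — the Hessian-mismatch term ($\gamma$), the Lipschitz remainder ($L$), and the CG-residual term ($\tilde{\lambda}_1$, entering through $\|\nabla f(x^k)\|\le 2\tilde{\lambda}_1^{1/2}\|d^k\|_{x^k}/(1-\eta^2)$ from the left inequality of Lemma~\ref{lem:CG}) — must be translated from the Euclidean norm into the local norm using the equivalence \eqref{bd69}, and the powers have to line up so the result is genuinely quadratic in $\|d^k\|_{x^k}$. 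I expect the three coefficients to assemble additively into $16\tilde{\lambda}_1\lambda_m + 2\gamma\lambda_m^{1/2} + L$ after clearing the $\lambda_m$-powers, with the factor $16$ and the $\tfrac{1}{2}$ normalization arising from the constants $2\tilde{\lambda}_1^{1/2}$ and $1-\eta^2\ge 3/4$ in Lemma~\ref{lem:CG}. Once the scalar inequality
\begin{equation*}
\|d^{k+1}\|_{x^{k+1}} \le \frac{1}{2}\frac{16\tilde{\lambda}_1\lambda_m+2\gamma\lambda_m^{1/2}+L}{\lambda_m^{3/2}}\|d^k\|_{x^k}^2
\end{equation*}
is in hand, multiplying both sides by $\tfrac{1}{2}\tfrac{16\tilde{\lambda}_1\lambda_m+2\gamma\lambda_m^{1/2}+L}{\lambda_m^{3/2}}$ produces the claimed self-referential quadratic bound, completing the proof.
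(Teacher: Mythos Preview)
Your proposal is correct and follows essentially the same route as the paper: the unit-step part is identical, and for the recursion the paper likewise Taylor-expands the gradient, splits into the CG residual (bounded via \eqref{bd32}, \eqref{bd97} and the left inequality of Lemma~\ref{lem:CG}), the Hessian mismatch (Lemma~\ref{lem:17}), and the Lipschitz remainder, then converts to the local norm via \eqref{bd69}. Two small notes: the residual at step $k$ is controlled by $\eta^{k}$, not $\eta^{k+1}$; and rather than bounding $\|\nabla f(x^{k+1})\|$ and invoking the right side of Lemma~\ref{lem:CG}, the paper bounds $|\nabla f(x^{k+1})^\intercal h|$ for arbitrary $h$, sets $h=d^{k+1}$, and uses Lemma~\ref{lem:15} directly---but this yields the same inequality.
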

\begin{proof}
By setting $\bar{\alpha}=1$ in Lemma \ref{lem:1} we get
$$
f(x^k)-f(x^{k+1}) \ge \frac{1}{2}\|d^k\|_{x^k}^2 - \frac{1}{6}\frac{L}{{\lambda}_m^{\frac{3}{2}}} \|d^k\|_{x^k}^3 = \Big(\frac{1}{2} - \frac{1}{6}\frac{L}{{\lambda}_m^{\frac{3}{2}}}\|d^k\|_{x^k}\Big)\|d^k\|_{x^k}^2.
$$ 
if $\|d^{k}\|_{x^k}\le 3(1-2c_2)\frac{\lambda_m^{\frac{3}{2}}}{L}$ we get
$$
f(x^k)-f(x^{k+1}) \ge c_2\|d^k\|_{x^k}^2,
$$
which implies that $\bar{\alpha}=1$ satisfies the exit condition of the backtracking line-search algorithm.
Let us define the quantities $\nabla f(x(t))^\intercal h$, where $h\in\mathbb{R}^m$, $x(t)=x^k+td^k$ and $x(\delta)=x^k+\delta d^k$ then we have 
\begin{align}
   \nabla f(x(t))^\intercal h & = \nabla f(x^k)^\intercal h  + t (d^k)^\intercal \nabla^2 f(x^k) h \nonumber \\
       				        & + \int_0^t\int_0^u \nabla^3f(x(\delta))[d^k,d^k,h] d\delta du \nonumber \\
                                             &\le \nabla f(x^k)^\intercal h  + t (d^k)^\intercal \nabla^2 f(x^k) h \nonumber \\
                                             &+ \int_0^t\int_0^u \Big|\nabla^3f(x(\delta))[d^k,d^k,h]\Big|d\delta du \nonumber\\
                                             &=   \nabla f(x^k)^\intercal h  + t (d^k)^\intercal \nabla^2 f(x^k) h \nonumber\\
                                             &+ \int_0^t\int_0^u  \lim_{\delta\to 0} \Big|\frac{(d^k)^\intercal (\nabla^2 f(x(\delta)) - \nabla^2 f(x^k)) h}{\delta}\Big| d\delta du\nonumber \\
    				        &\le \nabla f(x^k)^\intercal h  + t (d^k)^\intercal \nabla^2 f(x^k) h \nonumber\\
                                             &+ \|d^k\|\|h\|\int_0^t\int_0^u  \lim_{\delta\to 0}\Big\|\frac{1}{\delta}(\nabla^2 f(x(\delta)) - \nabla^2 f(x^k))\Big\| d\delta du \nonumber\\
     				        &\le \nabla f(x^k)^\intercal h  + t (d^k)^\intercal \nabla^2 f(x^k) h + \|d^k\|\|h\|\int_0^t\int_0^u L\|d^k\| d\delta du \nonumber\\
                                             &= \nabla f(x^k)^\intercal h  + t (d^k)^\intercal \nabla^2 f(x^k) h + \frac{t^2}{2}L \|d^k\|^2\|h\|. \nonumber
\end{align}
By taking absolute values and setting $t=1$ we get
\begin{align}\label{bd92}
|\nabla f(x^{k+1})^\intercal h| & \le |\nabla f(x^{k})^\intercal h  + (d^k)^\intercal \nabla^2 f(x^k) h| + \frac{1}{2}L \|d^k\|^2\|h\| \nonumber \\
                                              & \le \|\nabla f(x^{k}) +\nabla^2 f(x^k) d^k \|\|h\| + \frac{1}{2}L \|d^k\|^2\|h\| \nonumber\\
                                              & \le \|\nabla f(x^{k}) + H(x^k,y^k) d^k\|\|h\| \nonumber\\
                                              & +  \|\nabla^2 f(x^k) - H(x^k,y^k)\|\|d^k\| \| h\| + \frac{1}{2}L \|d^k\|^2\|h\| 
\end{align}
Observe that from \eqref{bd97} with $c^0=1$ we have that $\eta^k\le \|\nabla f(x^k)\|$. Hence, combining the previous
with Lemma \ref{lem:CG} and \eqref{bd32} in \eqref{bd92} we have that 
\begin{align*}
|\nabla f(x^{k+1})^\intercal h| & \le 8\tilde{\lambda}_1\|d^k\|_{x^k}^2\|h\|+  \|\nabla^2 f(x^k) - H(x^k,y^k)\|\|d^k\| \| h\|  \\
                                              & + \frac{1}{2}L \|d^k\|^2\|h\|.
\end{align*}
Using Lemma \ref{lem:17} we have
\begin{align*}
|\nabla f(x^{k+1})^\intercal h| \le 8\tilde{\lambda}_1\|d^k\|_{x^k}^2\|h\|+  \gamma \|d^k\|_{x^k}\|d^k\|\|h\|  + \frac{1}{2}L \|d^k\|^2\|h\|.
\end{align*}
From the equivalence of norms \eqref{bd69} we get 
$$
|\nabla f(x^{k+1})^\intercal h| \le \frac{1}{2}\frac{16\tilde{\lambda}_1\lambda_m+  2\gamma\lambda_m^{\frac{1}{2}}  + L}{\lambda_m^{\frac{3}{2}}} \|d^k\|_{x^k}^2\|h\|_{x^{k+1}}.
$$
The previous result holds for every $h\in\mathbb{R}^m$, hence, by setting $h=d^{k+1}$ and by using Lemma \ref{lem:15} we prove the second part of this lemma. The proof is complete.
\end{proof} 
The following corollary states the region of fast convergence rate of Newton-CG. By fast rate it is meant that if pdNCG is initialized in this region, then the worst-case iteration complexity result for convergence to $x^*$ is of the form $\log_2 \log_2 \frac{\text{constant}}{\text{required accuracy}}$. This statement is proved in Subsection \ref{sub4.4} in Theorem \ref{thm:5}.
\begin{corollary}\label{cor:2}
If the parameter $\eta^k$ in the termination criterion (\ref{bd32}) of CG is set as in \eqref{bd97} with $c_0=1$ and $\|d^k\|_{x^k}< \varpi$, $0< \varpi \le c_5$, where
$$
c_5 = \displaystyle\min\Big\{3(1-2c_2)\frac{{\lambda}_m^{\frac{3}{2}}}{L}, \frac{\lambda_m^{\frac{3}{2}}}{16\tilde{\lambda}_1\lambda_m+2\gamma\lambda_m^{\frac{1}{2}} + L}  \Big\},
$$
then according to Lemma \ref{lem:12} pdNCG convergences with fast rate.
\end{corollary}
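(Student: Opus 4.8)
The plan is to show that the stated hypotheses trap every subsequent pdNCG iterate inside the region covered by Lemma \ref{lem:12}, so that its two conclusions---unit step-sizes and the quadratic contraction of $\|d^k\|_{x^k}$---hold at every iteration and combine to give the fast rate. The decisive observation is that $c_5$ is the minimum of exactly the two thresholds appearing in Lemma \ref{lem:12}. Writing $C:=\tfrac12(16\tilde{\lambda}_1\lambda_m+2\gamma\lambda_m^{1/2}+L)/\lambda_m^{3/2}$ for the contraction constant, the second threshold is precisely $1/(2C)$, while the first is the unit-step bound $3(1-2c_2)\lambda_m^{3/2}/L$.

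First I would set $t_k:=C\|d^k\|_{x^k}$ and recast the inequality of Lemma \ref{lem:12} as $t_{k+1}\le t_k^2$. The hypothesis $\|d^k\|_{x^k}<\varpi\le c_5\le 1/(2C)$ yields $t_k\le 1/2$. Next I would establish that the region $\{\|d\|_x<\varpi\}$ is invariant: from $t_k\le 1/2$ the contraction gives $t_{k+1}\le t_k^2=t_k\cdot t_k\le \tfrac12 t_k<t_k$, hence $\|d^{k+1}\|_{x^{k+1}}<\|d^k\|_{x^k}<\varpi$, so the next iterate again satisfies the hypotheses of Lemma \ref{lem:12}. Because $\varpi\le c_5\le 3(1-2c_2)\lambda_m^{3/2}/L$ simultaneously keeps $\|d^k\|_{x^k}$ below the unit-step threshold, the line-search returns $\bar\alpha=1$ at each step, so $x^{k+1}=x^k+d^k$ and the contraction half of Lemma \ref{lem:12} is legitimately applied at every iteration.

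An induction on $k$ then propagates both facts to the whole tail of the sequence, and iterating $t_{k+1}\le t_k^2$ from $t_0\le 1/2$ gives $t_k\le (1/2)^{2^k}$---the doubly-exponential decay characteristic of quadratic convergence, which is exactly the ingredient needed for the $\log_2\log_2$ complexity established later in Theorem \ref{thm:5}. The only genuinely delicate point is making the region self-sustaining: Lemma \ref{lem:12} derives its contraction under the assumption of a full step $x^{k+1}=x^k+d^k$, so one must verify that both thresholds in $c_5$ are honoured simultaneously and at every iterate, not merely at the first. Once this invariance is secured the fast rate follows immediately, so the inductive bookkeeping---rather than any fresh estimate---is the crux of the argument.
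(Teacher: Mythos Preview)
Your proposal is correct and matches the paper's approach: the corollary is stated without a separate proof, being an immediate consequence of Lemma \ref{lem:12}, and the recursive contraction $t_{k+1}\le t_k^2$ you spell out is exactly what the paper invokes later in Theorem \ref{thm:5}. You have in fact been more explicit than the paper about the invariance step (that both thresholds in $c_5$ are needed so the unit step is taken and the contraction legitimately reapplies), which is a useful clarification the paper leaves implicit.
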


\subsection{Worst-case iteration complexity}\label{sub4.4}
The following theorem shows the worst-case iteration complexity of pdNCG in order to enter the region of fast convergence rate, i.e., $\|d\|_x < \varpi$, where $0<\varpi\le c_5$ and $c_5$
has been defined in Corollary \ref{cor:2}. In this theorem the constant $c_4$ has been defined in Lemma \ref{lem:monotonic}, $c_2$ and $c_3$ are constants of the backtracking line-search algorithm in step $4$ of pdNCG. 
Moreover, $x^*$ denotes the minimizer of problem \eqref{prob2}.
\begin{theorem}\label{thm:4}
Starting from an initial point $x^0$, such that $\|d^0\|_{x^0} \ge \varpi$ and setting $0\le \eta < 1$ in the termination criterion \eqref{bd32} of CG, then pdNCG requires at most 
$$
K_1 = c_6\log\Big(\frac{f(x^0)-f(x^*)}{c_7\varpi^2}\Big),
$$
iterations to obtain a solution $x^k$, $k>0$, such that $\|d^k\|_{x^k} < \varpi$, where 
$$
c_6=\frac{2\kappa^2}{(1-\eta^2)^2c_2c_3} \quad \mbox{and} \quad c_7=\frac{1}{2\kappa}.
$$
\end{theorem}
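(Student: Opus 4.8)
The plan is to show that the objective value converges linearly, so that only logarithmically many iterations are needed to drive the suboptimality below a threshold that itself forces $\|d^k\|_{x^k} < \varpi$. A naive argument using only Lemma \ref{lem:monotonic} would give a constant decrease $f(x^k)-f(x^{k+1}) > c_4\varpi^2$ per step while $\|d^k\|_{x^k}\ge\varpi$, yielding a bound linear in $(f(x^0)-f(x^*))/\varpi^2$; this is too weak to produce the logarithm. The logarithmic bound instead requires a geometric contraction of the function values, which is exactly where the two-sided strong-convexity estimate of Lemma \ref{lem:13} enters.

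First I would establish linear convergence of $\{f(x^k)-f(x^*)\}$. Starting from Lemma \ref{lem:monotonic}, $f(x^k)-f(x^{k+1}) > c_4\|d^k\|_{x^k}^2$, I would square the lower bound of Lemma \ref{lem:CG} to obtain $\|d^k\|_{x^k}^2 \ge \tfrac{(1-\eta^2)^2}{4\tilde{\lambda}_1}\|\nabla f(x^k)\|^2$, and then invoke the right-hand inequality of Lemma \ref{lem:13} in the form $\|\nabla f(x^k)\|^2 \ge 2\lambda_m(f(x^k)-f(x^*))$. Chaining these and substituting $c_4 = c_2c_3/\kappa$ and $\kappa=\tilde{\lambda}_1/\lambda_m$ yields $f(x^k)-f(x^{k+1}) > \rho\,(f(x^k)-f(x^*))$ with $\rho = \tfrac{c_2c_3(1-\eta^2)^2}{2\kappa^2} = 1/c_6$. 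Rearranging gives the contraction $f(x^{k+1})-f(x^*) < (1-\rho)\,(f(x^k)-f(x^*))$, and hence $f(x^k)-f(x^*) < (1-\rho)^k(f(x^0)-f(x^*))$ at every iterate short of the minimizer, independently of whether the fast-convergence region has been reached.

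Next I would translate the target $\|d^k\|_{x^k}<\varpi$ into a condition on the suboptimality. Combining the upper bound of Lemma \ref{lem:CG}, $\|d^k\|_{x^k}\le \lambda_m^{-1/2}\|\nabla f(x^k)\|$, with the left-hand inequality of Lemma \ref{lem:13}, $\|\nabla f(x^k)\|^2 \le 2\tilde{\lambda}_1(f(x^k)-f(x^*))$, gives $\|d^k\|_{x^k}^2 \le 2\kappa\,(f(x^k)-f(x^*))$. Therefore $f(x^k)-f(x^*) < c_7\varpi^2$ with $c_7 = 1/(2\kappa)$ already forces $\|d^k\|_{x^k}<\varpi$, so it suffices to count iterations until the suboptimality falls below $c_7\varpi^2$.

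Finally I would combine the two pieces. By the geometric decay, the condition $(1-\rho)^k(f(x^0)-f(x^*)) \le c_7\varpi^2$ holds once $k \ge \log\!\big(\tfrac{f(x^0)-f(x^*)}{c_7\varpi^2}\big)\big/\log\!\big(\tfrac{1}{1-\rho}\big)$; the elementary estimate $\log\tfrac{1}{1-\rho}\ge\rho$ then bounds the right-hand side by $\tfrac{1}{\rho}\log\!\big(\tfrac{f(x^0)-f(x^*)}{c_7\varpi^2}\big) = c_6\log\!\big(\tfrac{f(x^0)-f(x^*)}{c_7\varpi^2}\big) = K_1$, which is the claim. The only real care point is keeping the two applications of Lemma \ref{lem:13} pointed in opposite directions—the right inequality to lower-bound the per-step decrease, the left inequality to upper-bound $\|d^k\|_{x^k}$—and checking that the constants collapse exactly to $c_6=1/\rho$ and $c_7=1/(2\kappa)$; everything else is routine algebra.
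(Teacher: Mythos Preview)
Your proposal is correct and follows essentially the same approach as the paper: both combine Lemma~\ref{lem:monotonic} with the lower bound of Lemma~\ref{lem:CG} and the right-hand inequality of Lemma~\ref{lem:13} to obtain the geometric contraction $f(x^{k+1})-f(x^*) < (1-1/c_6)\bigl(f(x^k)-f(x^*)\bigr)$, and then use the upper bound of Lemma~\ref{lem:CG} together with the left-hand inequality of Lemma~\ref{lem:13} (the paper's inequality~\eqref{bd99}) to conclude $\|d^k\|_{x^k}^2 \le 2\kappa\bigl(f(x^k)-f(x^*)\bigr)$. Your write-up is in fact slightly more explicit than the paper's, since you spell out the elementary step $\log\frac{1}{1-\rho}\ge\rho$ needed to pass from the contraction to the iteration count $K_1$, which the paper leaves implicit.
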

\begin{proof}
Let us assume an iteration index $k>0$, then from Lemmas \ref{lem:13} and \ref{lem:CG} we get
\begin{equation}\label{bd99}
f(x^{k})-f(x^*) \ge \frac{1}{2\kappa}\|d^{k}\|_{x^{k}}^2,
\end{equation}
and 
\begin{equation}\label{bd100}
f(x^{k-1})-f(x^*) \le \frac{2\kappa}{(1-\eta^2)^2}\|d^{k-1}\|_{x^{k-1}}^2.
\end{equation}
From Lemma \ref{lem:monotonic} we have
\begin{equation}\label{bd101}
f(x^{k})< f(x^{k-1}) - c_4\|d^{k-1}\|_{x^{k-1}}^2.
\end{equation}
Combining \eqref{bd100}, \eqref{bd101} and subtracting $f(x^*)$ from both sides we get
\begin{align}
f(x^{k})-f(x^*) & < \Big(1-\frac{(1-\eta^2)^2c_4}{2\kappa}\Big)(f(x^{k-1})-f(x^*)) \nonumber \\
												     & < \Big(1-\frac{(1-\eta^2)^2c_4}{2\kappa}\Big)^k(f(x^0)-f(x^*)) \nonumber \\
												     &= \Big(1-\frac{(1-\eta^2)^2c_2c_3}{2\kappa^2}\Big)^k(f(x^0)-f(x^*))  \nonumber
\end{align}
From the last inequality and \eqref{bd99} we get
$$
\frac{1}{2\kappa}\|d^{k}\|_{x^{k}}^2 < \Big(1-\frac{(1-\eta^2)^2c_2c_3}{2\kappa^2}\Big)^k(f(x^0)-f(x^*)).
$$
Using the definitions of constants $c_6$ and $c_7$ we have
$$
\|d^{k}\|_{x^{k}}^2 < \Big(1-\frac{1}{c_6}\Big)^k\frac{1}{c_7}(f(x^0)-f(x^*)).
$$
Hence, we conclude that after at most $K_1$ iterations as defined in the preamble of this theorem, the algorithm produces $\|d^{k}\|_{x^{k}} < \varpi$.
The proof is complete.
\end{proof}

It is worth pointing out that a worst-case iteration complexity result for the global phase (before fast local convergence) of standard Newton
method can be obtained by slightly modifying the proof of Theorem \ref{thm:4}.
In particular, in proof of Theorem \ref{thm:4}, one simply has to replace matrix $H$ with $\nabla^2 f(x)$ and
set $\eta=0$ (exact Newton directions), the constant $\kappa$ remains unchanged since the matrices $H$ and 
$\nabla^2 f(x)$ have the same uniform bounds, see \eqref{bd2} and Lemma \ref{lem:14}.
Using the previous adjustments, it is easy to show that the dominant term $\kappa^2$ in the result of Theorem \ref{thm:4} is preserved
for standard Newton method. To the best of our knowledge, the result of $\mathcal{O}(\kappa^2)$ is the tightest that has been
 obtained for standard Newton method, see Subsection $9.5$ in \cite{bookboyd}.

The following theorem presents the worst-case iteration complexity result of pdNCG to obtain a solution $x^l$, of accuracy $f(x^l)-f(x^*)<\epsilon$, when
initialized at a point inside the region of fast convergence. 
\begin{theorem}\label{thm:5}
Suppose that there is an iteration index $k$ of pdNCG, such that $\|d^k\|_{x^k} < \varpi$. If $\eta$ in \eqref{bd32} is set as in \eqref{bd97} with $c_0=1$, then pdNCG needs at most
$$
K_2 = \log_2\log_2\Big(\frac{c_8}{\epsilon}\Big)
$$
additional iterations to obtain a solution $x^l$, $l>k$, such that
$
f(x^l)-f(x^*) < \epsilon,
$
where 
$$
c_8 = \frac{16\kappa\lambda_m^3}{(16\tilde{\lambda}_1\lambda_m+2\gamma\lambda_m^{\frac{1}{2}} + L)^2} .
$$
\end{theorem}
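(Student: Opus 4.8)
The plan is to turn the one-step quadratic contraction already proved in Lemma \ref{lem:12} into an iteration count by the standard double-logarithm argument. Write $C=\tfrac12\frac{16\tilde{\lambda}_1\lambda_m+2\gamma\lambda_m^{1/2}+L}{\lambda_m^{3/2}}$ for the scaling constant of Lemma \ref{lem:12}, so that its conclusion reads $C\|d^{k+1}\|_{x^{k+1}}\le (C\|d^k\|_{x^k})^2$. First I would check that the hypothesis $\|d^k\|_{x^k}<\varpi$ places us strictly inside the contraction regime: by the definition of $c_5$ in Corollary \ref{cor:2} we have $\varpi\le \frac{\lambda_m^{3/2}}{16\tilde{\lambda}_1\lambda_m+2\gamma\lambda_m^{1/2}+L}=\frac{1}{2C}$, hence $t_0:=C\|d^k\|_{x^k}<\tfrac12$.

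Next, setting $t_j:=C\|d^{k+j}\|_{x^{k+j}}$ and iterating the contraction gives $t_{j+1}\le t_j^2$, so by induction $t_j\le t_0^{2^j}<(1/2)^{2^j}$. Since $t_j<1$ the sequence is strictly decreasing, so every subsequent iterate stays below $\varpi\le c_5$; in particular $\|d^{k+j}\|_{x^{k+j}}$ never exceeds the threshold $3(1-2c_2)\lambda_m^{3/2}/L$, Lemma \ref{lem:12} keeps applying, and the backtracking line search keeps returning unit step-sizes, so $x^{k+j+1}=x^{k+j}+d^{k+j}$ throughout. This forward invariance of the region is exactly what legitimises repeating the one-step estimate at every iteration, and I would flag it explicitly.

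Then I would translate the bound on $\|d^{k+j}\|_{x^{k+j}}$ into a bound on the optimality gap. Chaining the right inequality of Lemma \ref{lem:13} with the left inequality of Lemma \ref{lem:CG} gives $f(x)-f(x^*)\le \frac{2\kappa}{(1-\eta^2)^2}\|d\|_x^2$, exactly the estimate \eqref{bd100}; and since $\eta$ is chosen by \eqref{bd97} we have $\eta\le\tfrac12$, so $(1-\eta^2)^2\ge 9/16$. Substituting $\|d^{k+j}\|_{x^{k+j}}<\tfrac1C(1/2)^{2^j}$, using $\tfrac1{C^2}=\frac{4\lambda_m^3}{(16\tilde{\lambda}_1\lambda_m+2\gamma\lambda_m^{1/2}+L)^2}$, and collecting the constants yields $f(x^{k+j})-f(x^*)<c_8\,(1/2)^{2^j}$ with $c_8$ as in the statement: the factor $16$ in $c_8$ is precisely what absorbs the $2\kappa$, the scaling $\tfrac1{C^2}$, and the slack $\tfrac1{(1-\eta^2)^2}\le 16/9$ (one also uses $(1/2)^{2^{j+1}}<(1/2)^{2^j}$ to pass from the squared direction to the clean exponent). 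Finally, demanding $c_8(1/2)^{2^j}\le\epsilon$ and solving gives $2^j\ge\log_2(c_8/\epsilon)$, i.e. $j\ge\log_2\log_2(c_8/\epsilon)=K_2$, so $K_2$ additional iterations suffice.

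The main obstacle is bookkeeping rather than conceptual. The double logarithm is the automatic consequence of the squaring $t_{j+1}\le t_j^2$; the real care goes into (i) confirming that the contraction region is forward invariant so that Lemma \ref{lem:12} may be invoked at every step, and (ii) tracking the multiplicative constants so that the $\eta$-dependent slack and the scaling $C$ collapse exactly into the stated $c_8$ rather than some larger multiple.
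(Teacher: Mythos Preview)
Your proposal is correct and follows essentially the same route as the paper: recursively apply Lemma~\ref{lem:12} to get $C\|d^{k+j}\|_{x^{k+j}}<(1/2)^{2^j}$, then combine Lemmas~\ref{lem:13} and~\ref{lem:CG} with $\eta\le 1/2$ to convert this into an optimality-gap bound, and solve for $j$. Your version is in fact a bit more careful than the paper's, which simply writes ``applying Lemma~\ref{lem:12} recursively'' without explicitly verifying forward invariance of the region $\|d\|_x<\varpi$; your observation that $t_j<1/2$ is strictly decreasing, so the hypotheses of Lemma~\ref{lem:12} remain in force, fills that small gap.
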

\begin{proof}
Suppose that there is an iteration index $k$ such that $\|d^k\|_{x^k}<\varpi$, then for an index $l>k$, by applying Lemma \ref{lem:12} recursively we get
\begin{align}\label{bd102}
\frac{1}{2}\frac{16\tilde{\lambda}_1\lambda_m+2\gamma\lambda_m^{\frac{1}{2}} + L}{\lambda_m^{\frac{3}{2}}}\|d^{l}\|_{x^{l}} & \le \Big(\frac{1}{2}\frac{16\tilde{\lambda}_1\lambda_m+2\gamma\lambda_m^{\frac{1}{2}} + L}{\lambda_m^{\frac{3}{2}}}\|d^k\|_{x^k}\Big)^{2^{l-k}}  \nonumber \\
             &< \Big(\frac{1}{2}\Big)^{2^{l-k}}.
\end{align}
From Lemmas \ref{lem:13}, \ref{lem:CG} and $\eta^k$ in \eqref{bd97} we get
\begin{equation*}
f(x^l)-f(x^*) \le {4\kappa}\|d^l\|_{x^l}^2,
\end{equation*}
By replacing \eqref{bd102} in the above inequality we get
$$
f(x^l)-f(x^*) < \frac{16\kappa\lambda_m^3}{(16\tilde{\lambda}_1\lambda_m+2\gamma\lambda_m^{\frac{1}{2}} + L)^2} \Big(\frac{1}{2}\Big)^{2^{l-k+1}}.
$$
Hence, in order to obtain a solution $x^l$, such that $f(x^l)-f(x^*)< \epsilon$, pdNCG requires at most as many iterations as in the preamble of this theorem.
The proof is complete.
\end{proof}

The following theorem summarizes the complexity result of pdNCG. The constants $c_6$, $c_7$ and $c_8$
in this theorem are defined in Theorems \ref{thm:4} and \ref{thm:5}, respectively. 
\begin{theorem}\label{thm:7}
Starting from an initial point $x^0$, such that $\|d^0\|_{x^0} \ge \varpi$, pdNCG requires at most
$$
K_3 = c_6\log\Big(\frac{f(x^0)-f(x^*)}{c_7\varpi^2}\Big) + \log_2\log_2\Big(\frac{c_8}{\epsilon}\Big)
$$
iterations to converge to a solution $x^k$, $k>0$, of accuracy 
$$
f(x^k)-f(x^*)< \epsilon.
$$
\end{theorem}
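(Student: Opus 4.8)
The plan is to combine the two-phase analysis already in place: Theorem \ref{thm:4} bounds the global phase, i.e.\ the number of iterations needed to reach the region of fast convergence $\|d\|_x < \varpi$, while Theorem \ref{thm:5} bounds the local phase, i.e.\ the additional iterations needed to reach accuracy $\epsilon$ once inside that region. Since the claimed count $K_3$ is literally the sum $K_1 + K_2$ of the two previous bounds, the proof reduces to concatenating them and verifying that the hypotheses needed to pass from one phase to the other are mutually consistent.

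First I would invoke Theorem \ref{thm:4}: starting from $x^0$ with $\|d^0\|_{x^0} \ge \varpi$, the global phase produces, after at most $K_1 = c_6\log\big((f(x^0)-f(x^*))/(c_7\varpi^2)\big)$ iterations, an iterate $x^k$ with $\|d^k\|_{x^k} < \varpi$. I would run pdNCG with the termination parameter $\eta$ chosen according to \eqref{bd97} with $c_0 = 1$ from the outset. Next I would apply Theorem \ref{thm:5} at the first index $k$ for which $\|d^k\|_{x^k} < \varpi$: because $\varpi \le c_5$, Corollary \ref{cor:2} guarantees that under the rule \eqref{bd97} the iterates stay in the region of fast convergence, so Lemma \ref{lem:12} applies recursively and Theorem \ref{thm:5} yields a solution $x^l$, $l > k$, with $f(x^l)-f(x^*) < \epsilon$ after at most $K_2 = \log_2\log_2(c_8/\epsilon)$ further iterations. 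Adding the two counts gives precisely $K_3 = K_1 + K_2$.

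The only point requiring care, and the main obstacle, is the consistency of $\eta$ across the two phases: Theorem \ref{thm:4} is stated for any fixed $0 \le \eta < 1$, whereas Theorem \ref{thm:5} and Corollary \ref{cor:2} demand the adaptive rule \eqref{bd97}. I resolve this by using \eqref{bd97} throughout. Since that rule always returns a value bounded above by $1/2 < 1$, the hypothesis of Theorem \ref{thm:4} is met, the global bound $K_1$ holds unchanged, and no extra iterations are spent at the hand-over. The two bounds therefore add, and the stated complexity follows.
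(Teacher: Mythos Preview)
Your proposal is correct and matches the paper's approach exactly: the paper treats Theorem~\ref{thm:7} as a direct summary of Theorems~\ref{thm:4} and~\ref{thm:5} and does not even supply a separate proof. Your handling of the $\eta$-consistency issue (using rule~\eqref{bd97} throughout, so that $\eta^k\le 1/2$ and the hypothesis of Theorem~\ref{thm:4} is satisfied with $c_6$ evaluated at the worst case) is in fact more explicit than anything the paper spells out.
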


\section{Numerical Experiments}\label{sec:nexp}
We illustrate the robustness and efficiency of pdNCG on synthetic $\ell_1$-regularized Sparse Least-Squares (S-LS) problems and real world $\ell_1$-regularized Logistic Regression (LR) problems. 
 
 \subsection{State-of-the-art first-order methods}
A number of efficient first-order methods \cite{fista,changHsiehLin,HsiehChang,petermartin,peterbigdata,shwartzTewari,tsengblkcoo,nesterovhuge,tsengyun,wrightaccel,tonglange} have been developed for the solution of problem (\ref{prob1}). 
The most efficient first-order methods, for example \cite{fista,peterbigdata}, rely on properties of the $\ell_1$-norm to obtain the new direction at each iteration. 
In particular, at every iteration they require the exact minimization of a subproblem 
\begin{equation}\label{bd81}
\min_{x^+} \ \ \tau \|x^+\|_1 + \varphi(x) + \nabla \varphi(x)^\intercal (x^+-x) + \frac{L_\varphi}{2} \|x^+-x\|^2,
\end{equation}
where $x$ is a given point. Other first-order methods use the decomposability of the former problem and solve it only for some chosen coordinates \cite{peterbigdata}. 
In this case, the Lipschitz constant is replaced by partial Lipschitz constants for each chosen coordinate.

 
In this section we compare pdNCG with two such state-of-the-art first-order methods.

\begin{itemize}
\item FISTA (Fast Iterative Shrinkage-Thresholding Algorithm) \cite{fista} is an optimal first-order method for problem \eqref{prob1}.  
An efficient implementation of this algorithm can be found as part 
of the TFOCS (Templates for First-Order Conic Solvers) package \cite{convexTemplates} under the name N$83$.

\item PCDM (Parallel Coordinate Descent Method) \cite{peterbigdata}. The published implementation performs parallel coordinate updates asynchronously based on \eqref{bd81}, where the coordinates are chosen uniformly at random.
This method is well-known for exploiting separability of the problems.
\end{itemize}

\subsection{Implementation details}
Solver PCDM is a $C$++ implementation, while FISTA and pdNCG are implemented in MATLAB.  
We expect that the programming language will not be an obstacle for FISTA and pdNCG. This is because these methods rely only on basic linear algebra operations, such as the dot product, which are implemented in $C$++ 
in MATLAB by default. 
All experiments are performed on a Dell PowerEdge R920 running Redhat Enterprise Linux with four Intel Xeon E7-4830 v2 2.2GHz processors, 20M Cache, 7.2 GT/s QPI, Turbo (4x10Cores). 
PCDM as a parallel method exploits $40$ cores. Whilst, FISTA and pdNCG, which are MATLAB implementations, exploit multicore systems by performing in parallel simple linear algebra tasks by default.
Finally, for pdNCG a simple diagonal preconditioner is used for all experiments. The preconditioner is set to be the inverse of the diagonal
of matrix $H$.

\subsection{Parameter tuning}


For pdNCG, the smoothing parameter $\mu$ is set to $1.0e$-$4$ and the parameter $\eta$ in \eqref{bd97} is set to $1.0e$-$1$. The backtracking step-size
of pdNCG is set to $c_3=1/2$ and the parameter of sufficient decrease is set to $c_2=1.0e$-$3$.
For PCDM, the parameter $\sigma$ is set to $1 + 23(\nu-1)/(m-1)$ like it is proposed in \cite{peterbigdata}, where $\nu$ is the partial separability degree 
of the problem that is solved; we will define $\nu$ later in this section. 
Moreover, for PCDM making functions evaluations is prohibited, because it is considered as a very expensive operation, hence,
we do not include the running time of making such operations in the total running time. 
For FISTA we use the default parameter setting. All solvers are initialized to the zero solution. 

We run pdNCG for sufficient time such that the problems are adequately solved. Then, FISTA and PCDM are terminated when the objective function $ f_{\tau}(x)$ in \eqref{prob1} 
is below the one obtained by pdNCG or when a predefined maximum number of iterations limit is reached. All comparisons are presented in figures that 
show the progress of the objective function against the wall clock time. This way, the reader can compare the performance of the solvers for various levels of 
accuracy. 

\subsection{$\ell_1$-Regularized Sparse Least-Squares}\label{subset:slsq}
In this subsection we compare pdNCG with FISTA and PCDM. The comparison is made on a problem for which
\begin{equation*}
\varphi(x)=\frac{1}{2}\|Ax-b\|^2
\end{equation*} 
in (\ref{prob1}),
where $x\in\mathbb{R}^m$, $b\in\mathbb{R}^n$, $A\in\mathbb{R}^{n\times m}$ with $n \ge m$. We are interested in problems of this form 
that are sparse, with ill-conditioned $A^\intercal A$ and \textit{partially} or \textit{highly} separable. The definition of separability that is employed 
is the same as in \cite{peterbigdata}, which for these problems is measured with the following constant
\begin{equation}\label{bd83}
\beta_{LS} := \max_{j\in[1,2,\ldots,n]} \|A_j\|_0,
\end{equation}
where $A_j$ is the $j^{th}$ row of matrix $A$. Obviously, the following holds $1\le \beta_{LS} \le m$. Notice that the larger $\beta_{LS}$ is the less separable the problem becomes. 
However, observe that $\beta_{LS}$ captures separability based only on the most dense row of matrix $A$. This implies that there might exist a matrix $A$ that is very sparse but
there is a single row of $A$ that is relatively dense and this will result in large $\beta_{LS}$.
In the examples that will be presented in this subsection $\beta_{LS}$ is a small fraction of $m$.

\subsubsection{Benchmark Generator}


A generator for non-trivial sparse S-LS problems is given in the following simple process. First, a full-rank matrix $A\in\mathbb{R}^{n\times m}$ with $n \ge m$ is generated. Second, the eigenvalue decomposition of $A^\intercal A=Q\Lambda Q^\intercal$ is computed.
Third, the optimal solution is generated by approximately solving
\begin{equation}\label{eq:130}
\begin{array}{cll}
x^*:= & \displaystyle\argmin_{x\in\mathbb{R}^{n}} & \|Q^\intercal x - \Lambda^{-1} e\|^2 \\
& \mbox{subject to:} & \|x\|_0 \le s, \\
\end{array}
\end{equation}
where $e$ is a vector of ones, $\|\cdot\|_0$ is the zero norm, which counts the number of nonzero components of the input argument and $s$ is a positive integer.
To solve the above problem one can use an Orthogonal Matching Pursuit (OMP) \cite{cosamp} 
solver implemented in \cite{cosampImpl}.
The aim of this approach is to find a sparse $x^*$, which can be expressed as $x^*=Qv$, where the coefficients $v$ of the linear combination are close to the inverse of the eigenvalues of matrix $A^\intercal A$. 
Intuitively, this technique will create an $x^*$, which has strong dependence on subspaces that correspond to the smaller eigenvalues of $A^\intercal A$.
It is well known that such an $x^*$ makes the problem difficult to solve, see for example the analysis of Steepest Descent for LS in \cite{Shewchuk94anintroduction}. Finally, $b$ can be generated such that
the following holds
$$
x^*:=  \displaystyle\argmin_{x\in\mathbb{R}^{n}}  \|x\|_1 + \|Ax-b\|^2.
$$ 
This is achieved by substituting in the optimality conditions of the previous problem $x^*$ and then choosing $b$ such that the optimality conditions are satisfied.
It is easy to extend the generator and consider a minimization of $\min_{x\in\mathbb{R}^{n}}  \tau \|x\|_1 + \|Ax-b\|^2$ instead.


\subsubsection{Synthetic sparse least-squares example: increasing conditioning}
We now present the performance of pdNCG, FISTA and PCDM for increasing condition number of matrix $A^TA$. 
We generate six instances $(A,b,x^*)$, where the condition number of $A^\intercal A$ takes values $1.00e$\scalebox{.75}{$\mplus$}$02$, $1.00e$\scalebox{.75}{$\mplus$}$04$,
$1.00e$\scalebox{.75}{$\mplus$}$06$, $1.00e$\scalebox{.75}{$\mplus$}$08$, $1.00e$\scalebox{.75}{$\mplus$}$10$ and $1.00e$\scalebox{.75}{$\mplus$}$12$.
Matrix $A$ has $m=2^{22}$ columns, $n= 2 m$ rows and rank $m$. 
Moreover, matrix $A$ is sparse, i.e., $nnz(A)/(mn)\approx 3.00e$-$07$ and $\beta_{LS}= 2$.
The optimal solution $x^*$ has approximately $s \approx 8.0e$-$03 m$ non-zero components.

The results of this experiment are shown in Figure \ref{fig2}. 
In this figure the objective function $f_\tau(x)$ is presented against the wall clock time for each solver. Observe the log-scale used for both axes. 
The wall clock time of the solvers is shown after their first iteration takes place. 
PCDM was the fastest method for condition number less than or equal to $1.00e$\scalebox{.75}{$\mplus$}$04$, while pdNCG was the second fastest method. 
For condition number $1.00e$\scalebox{.75}{$\mplus$}$06$ pdNCG converged in comparable time with PCDM, which was the fastest. 
For condition number larger than or equal to $1.00e$\scalebox{.75}{$\mplus$}$08$ pdNCG was clearly the fastest method. Moreover, for condition
number larger than or equal to $1.00e$\scalebox{.75}{$\mplus$}$10$ pdNCG was the only method that solved the problems to sufficient accuracy
within reasonable time. 
Notice that despite the problems being sparse and highly separable, PCDM and FISTA did not scale well for the problems with condition number larger than or equal to $1.0e$+$10$. 
In particular, when the condition number of $A^\intercal A$ is $1.00e$\scalebox{.75}{$\mplus$}$12$, PCDM and FISTA did not converge in competitive time; they were terminated after more than $27$ hours of wall clock time. 
\begin{figure}%
  \centering

  \subfloat[$\kappa(A^\intercal A) = 1.0e$+$02$]{\label{fig2a}\includegraphics[scale=0.37]{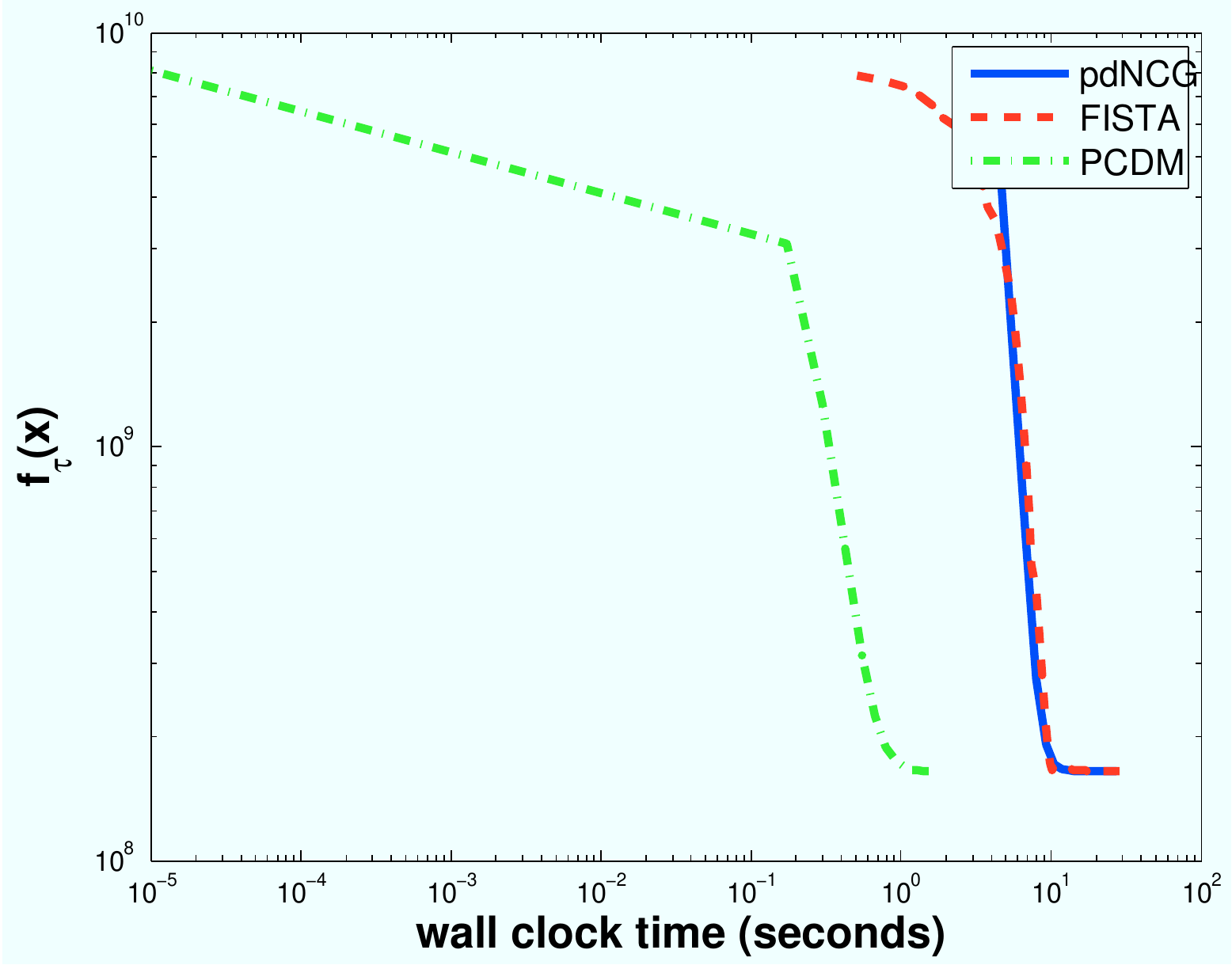}}
  \subfloat[$\kappa(A^\intercal A) = 1.0e$+$04$]{\label{fig2b}\includegraphics[scale=0.37]{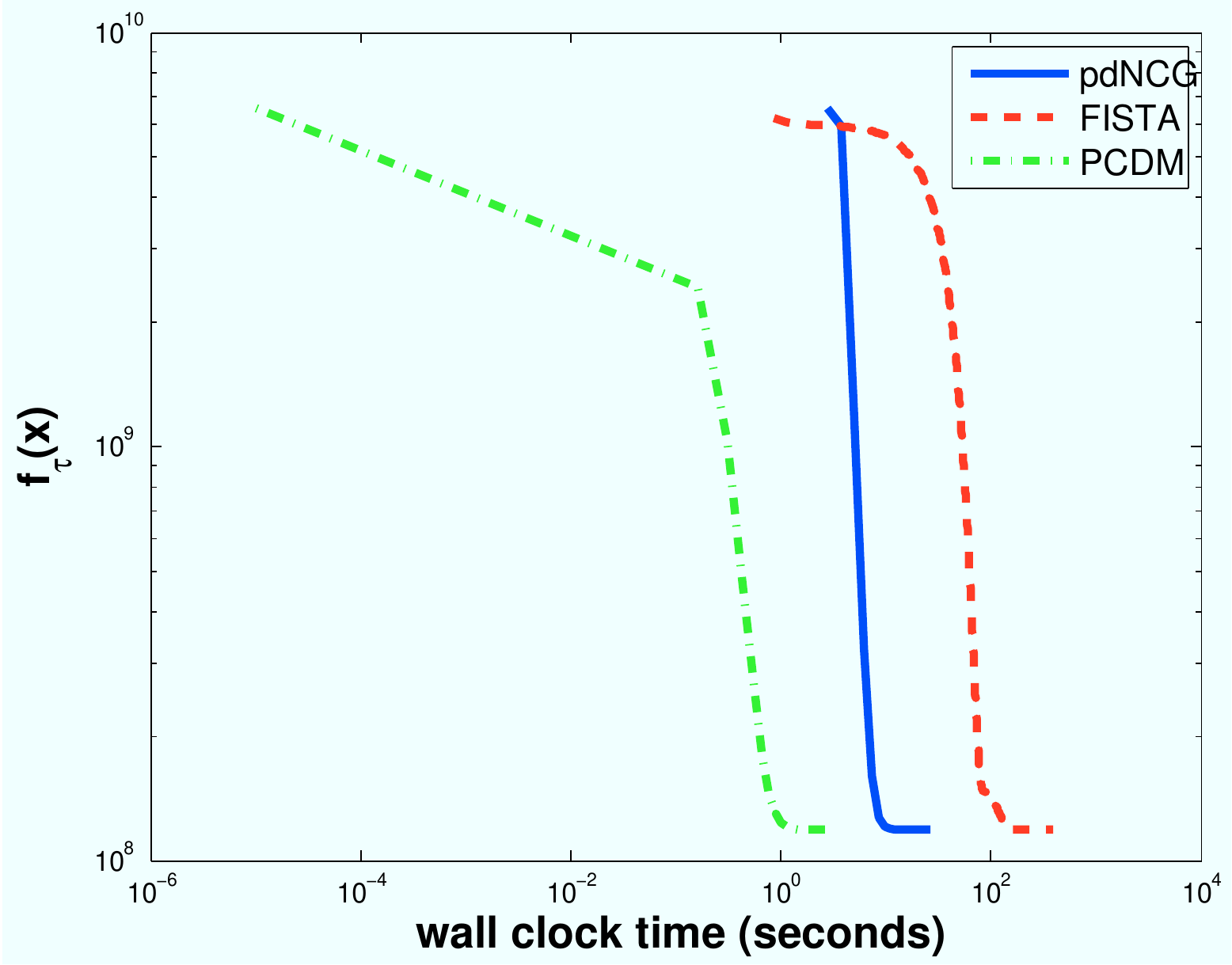}}
  \\
  \subfloat[$\kappa(A^\intercal A) = 1.0e$+$06$]{\label{fig2c}\includegraphics[scale=0.37]{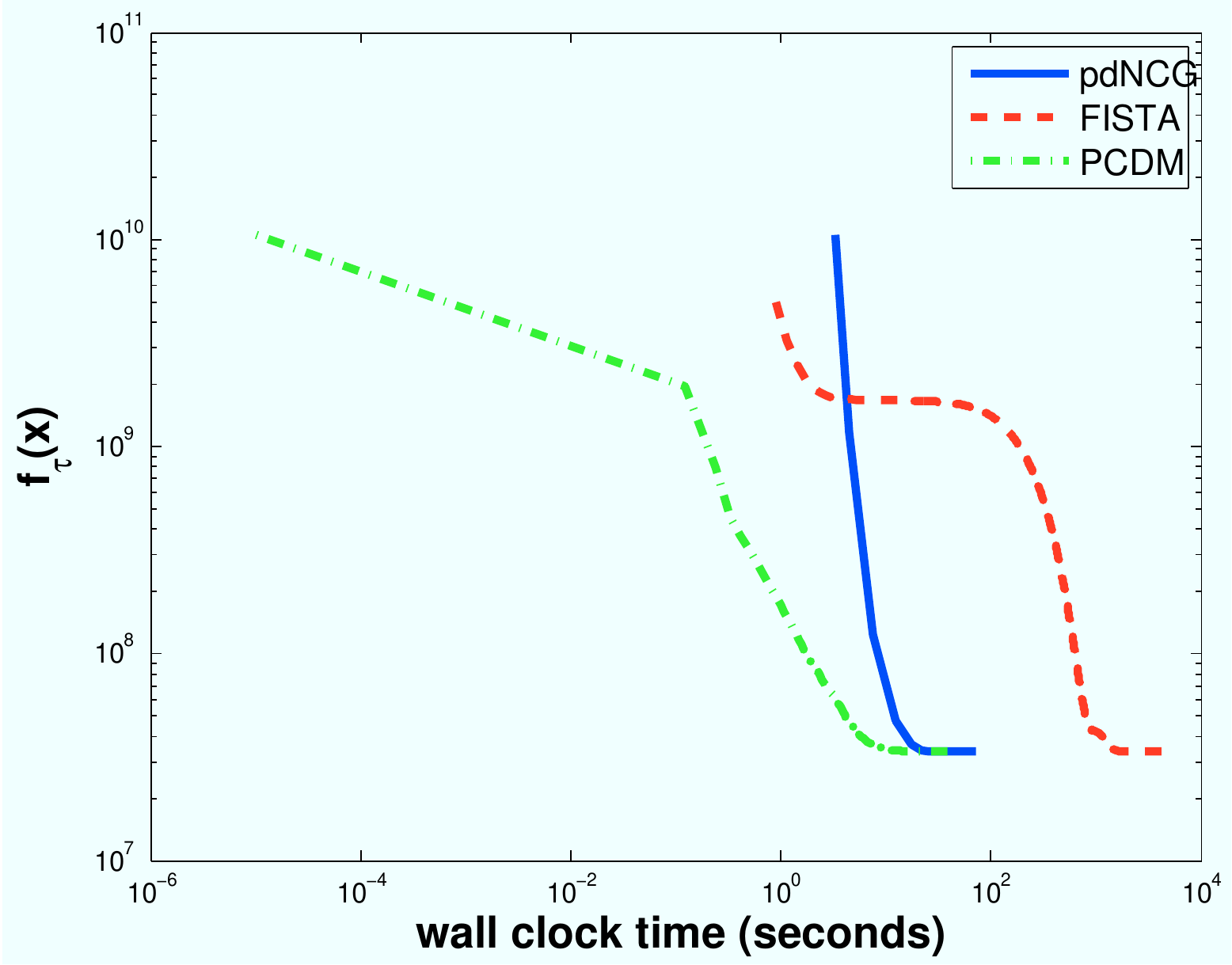}}
  \subfloat[$\kappa(A^\intercal A) = 1.0e$+$08$]{\label{fig2d}\includegraphics[scale=0.37]{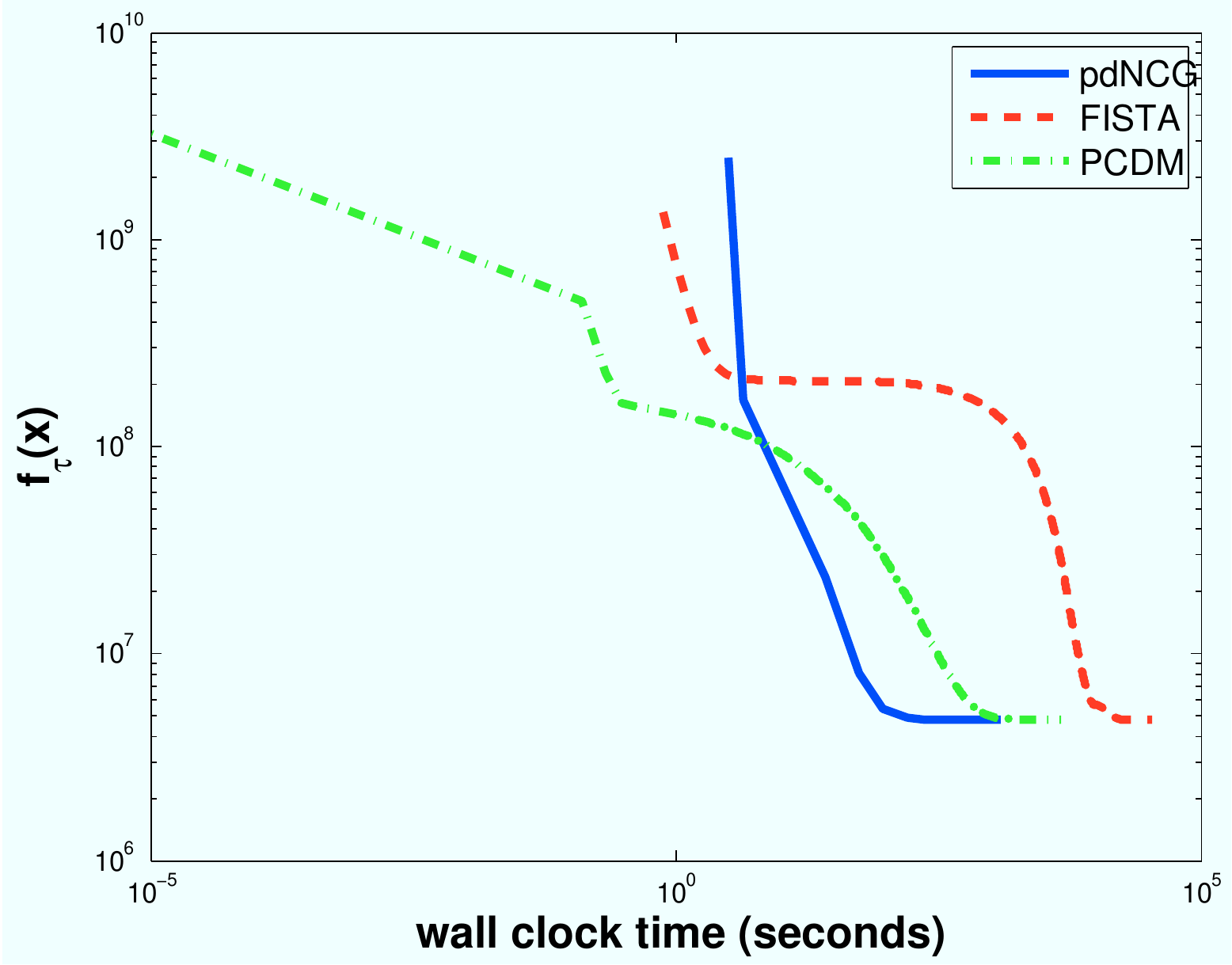}}
  \\
  \subfloat[$\kappa(A^\intercal A) = 1.0e$+$10$]{\label{fig2e}\includegraphics[scale=0.37]{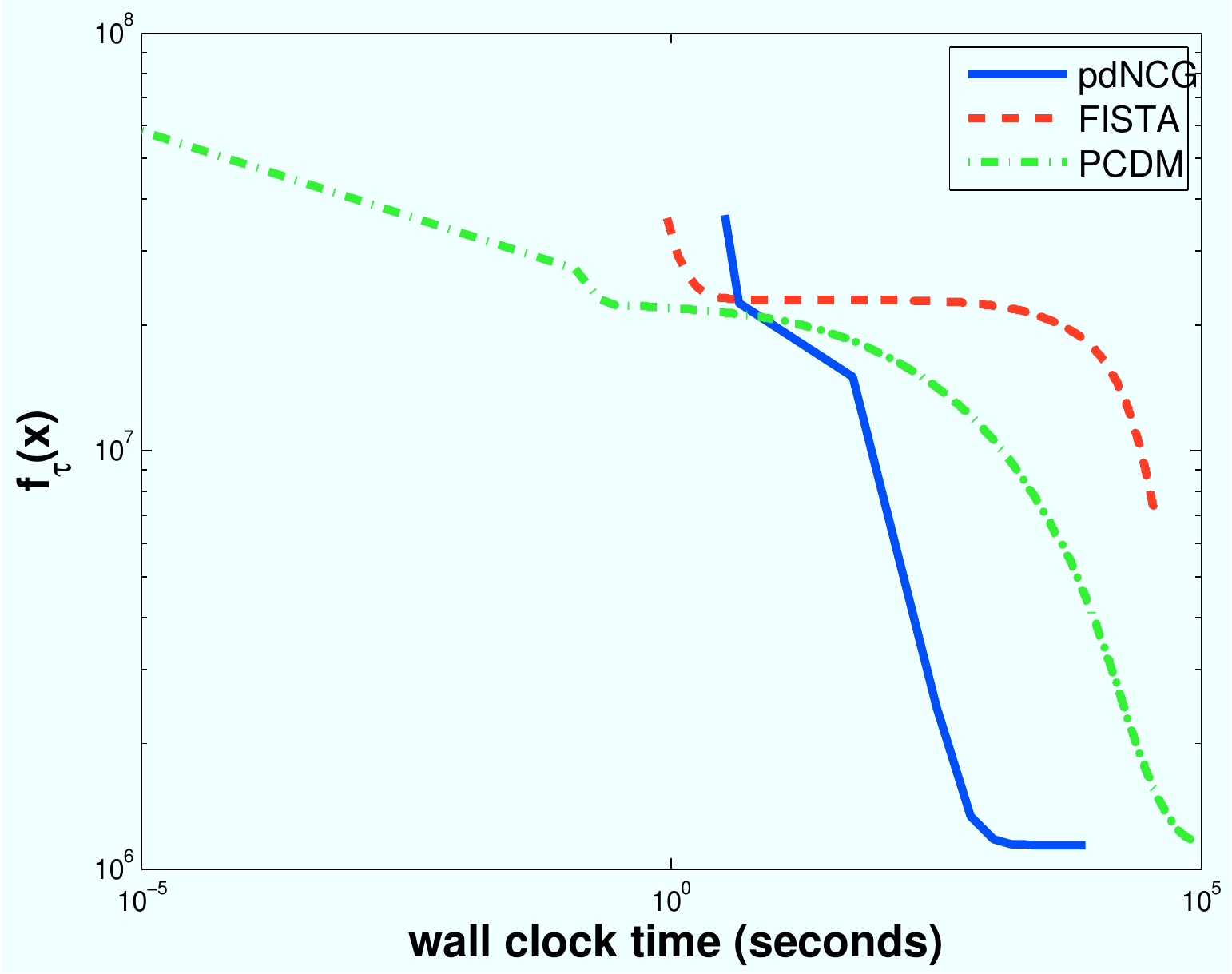}}
  \subfloat[$\kappa(A^\intercal A) = 1.0e$+$12$]{\label{fig2f}\includegraphics[scale=0.37]{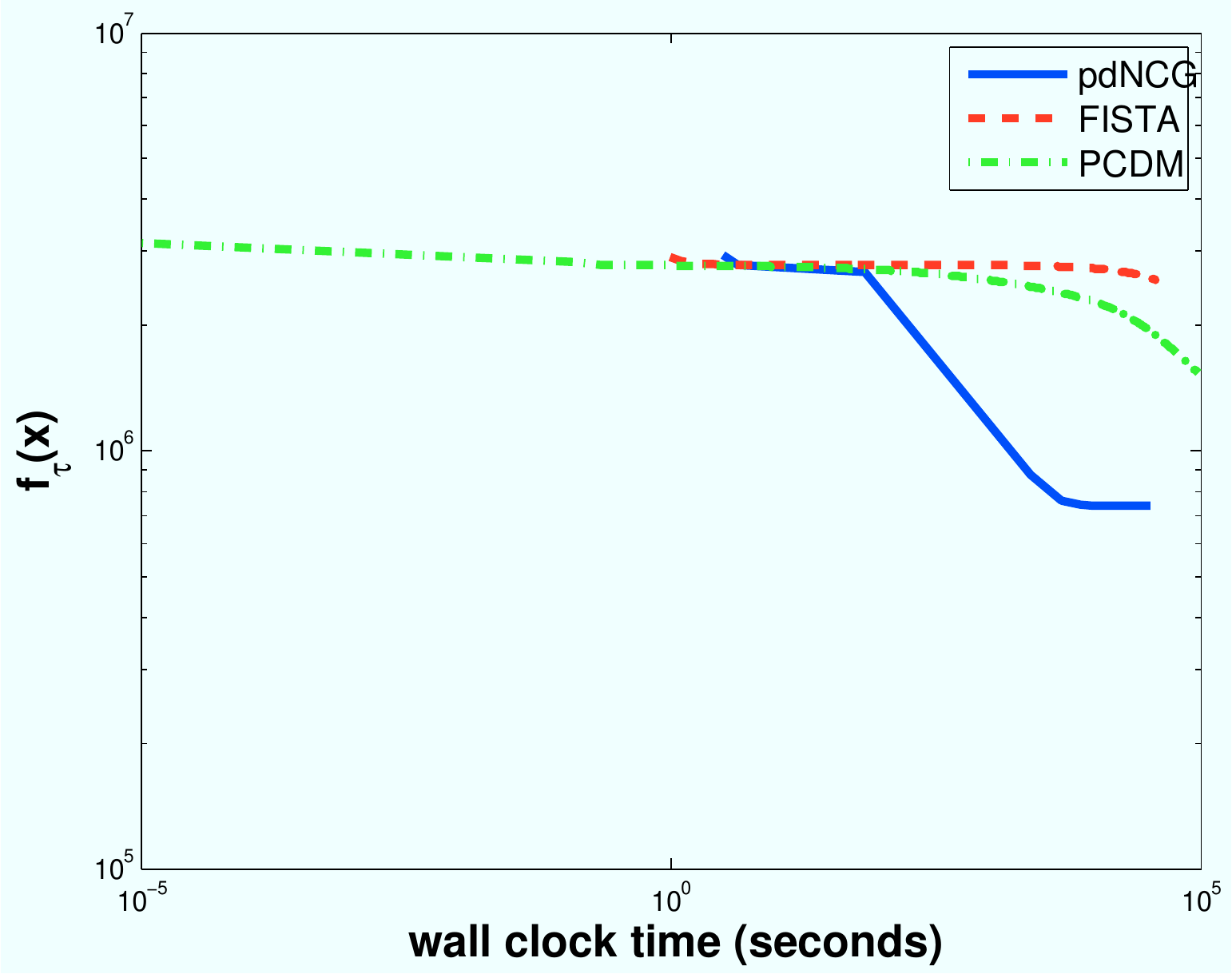}}
\caption{Performance of pdNCG, FISTA and PCDM on a synthetic sparse S-LS problem for increasing condition number of matrix $A^\intercal A$, denoted
as $\kappa(A^\intercal A)$. 
The axis are in log-scale. In this figure $f_\tau(x)$ denotes the objective value that was obtained by each solver}
\label{fig2}%
\end{figure}

\subsubsection{Synthetic sparse least-squares example: increasing dimensions}
In this experiment we present the performance of pdNCG, FISTA and PCDM as the number of variables $m$ increases. 
We generate three instances $(A,b,x^*)$, where $m$ takes values $2^{20}$, $2^{22}$ and
$2^{24}$.
Matrix $A$ has $n= 2 m$ rows, rank $m$ and the condition number of $A^\intercal A$ is $1.00e$\scalebox{.75}{$\mplus$}$08$.
Moreover, matrix $A$ is sparse, i.e., $\beta_{LS}= 2$, $nnz(A)/(mn)\approx 1.00e$-$06$, $3.00e$-$07$ and $5.00e$-$08$, respectively for each $m$.
The optimal solution $x^*$ has again approximately $s \approx 8.0e$-$03 m$ non-zero components.

The results of this experiment are presented in Figure \ref{fig3}. 
Observe that the required wall-clock time for pdNCG scaled similarly to the first-order methods FISTA and PCDM, despite being a second-order method.

\begin{figure}%
  \centering

  \subfloat[$m = 2^{20}$]{\label{fig3a}\includegraphics[scale=0.37]{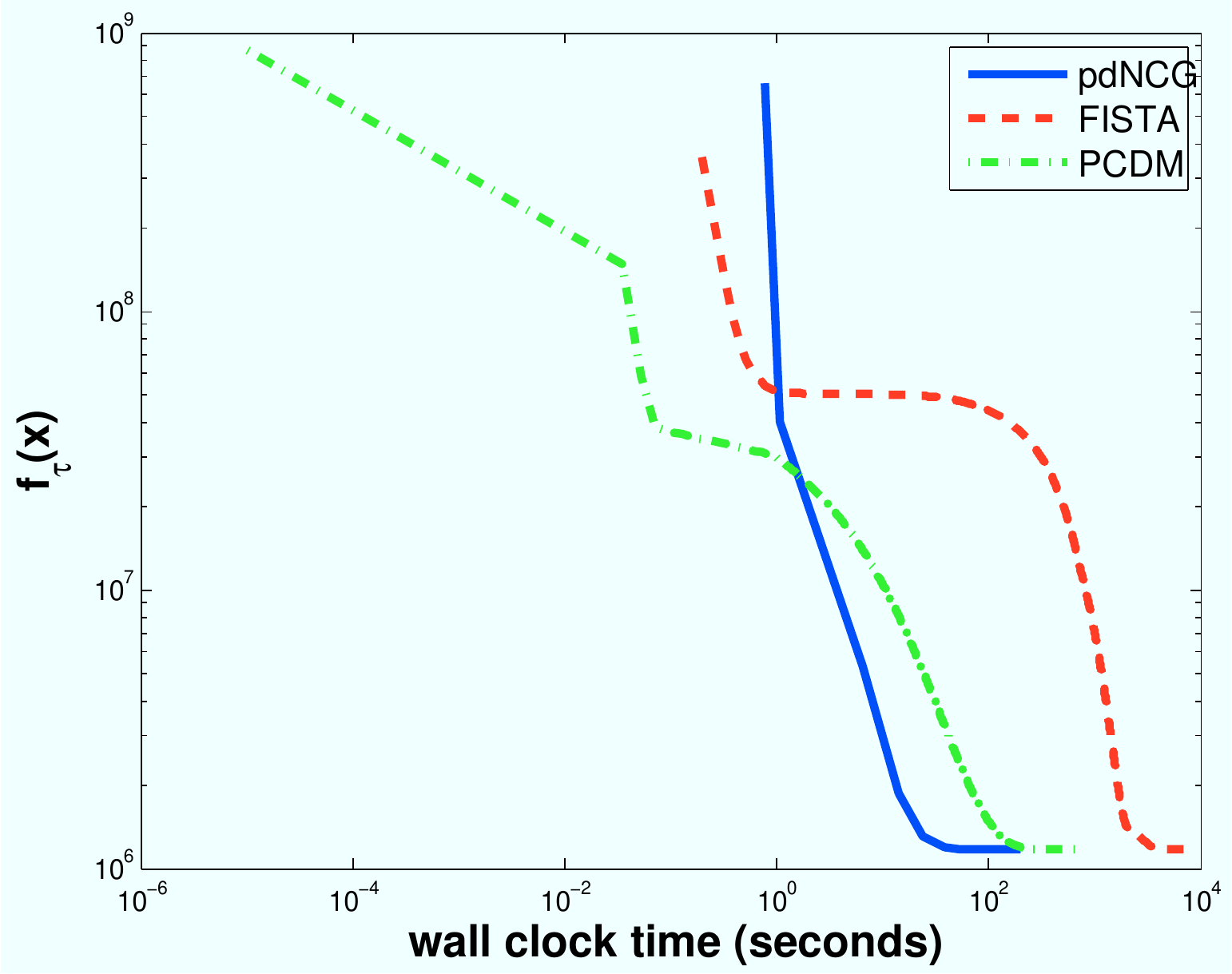}}
  \subfloat[$m = 2^{22}$]{\label{fig3b}\includegraphics[scale=0.37]{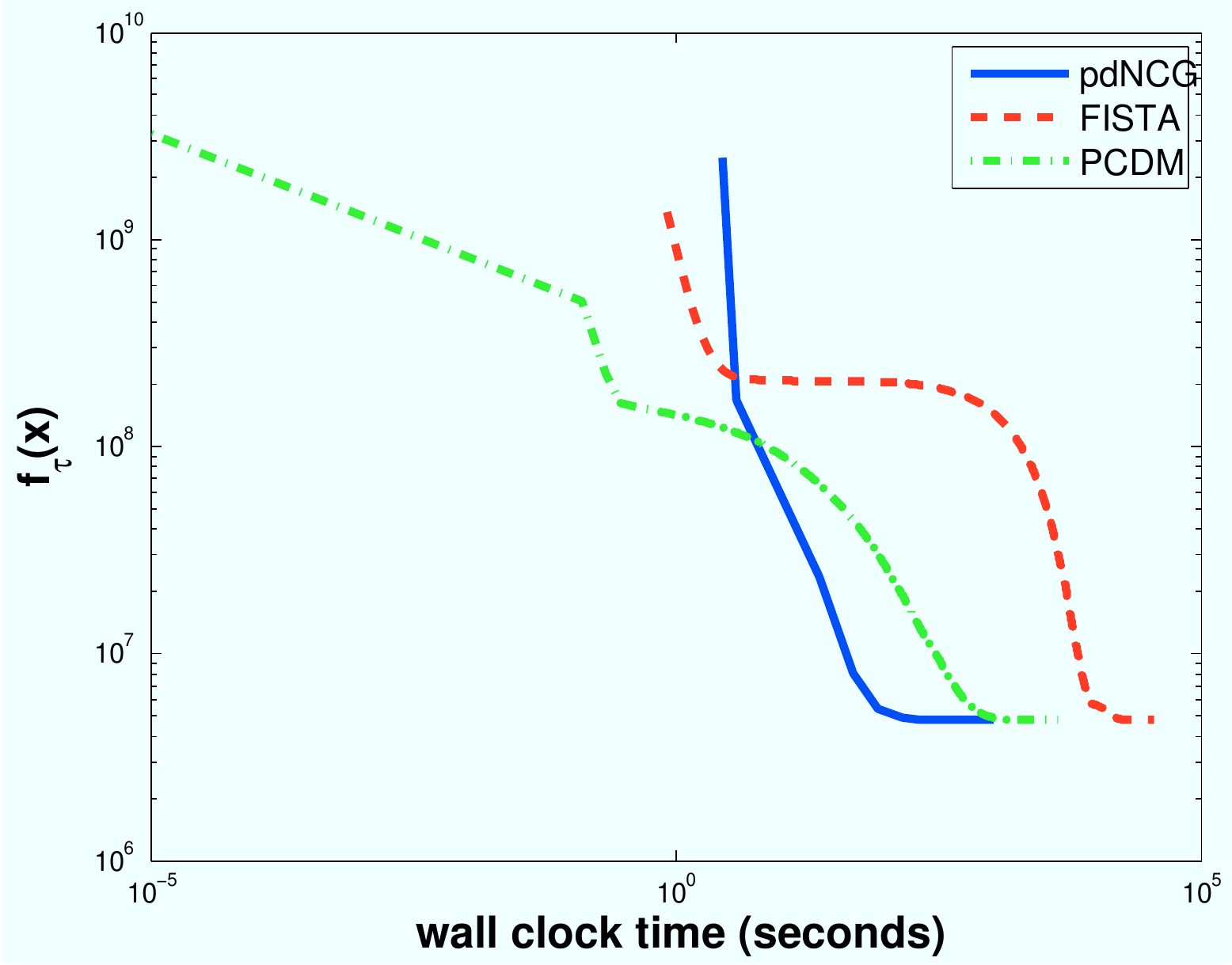}}
  \\
  \subfloat[$m = 2^{24}$]{\label{fig3c}\includegraphics[scale=0.37]{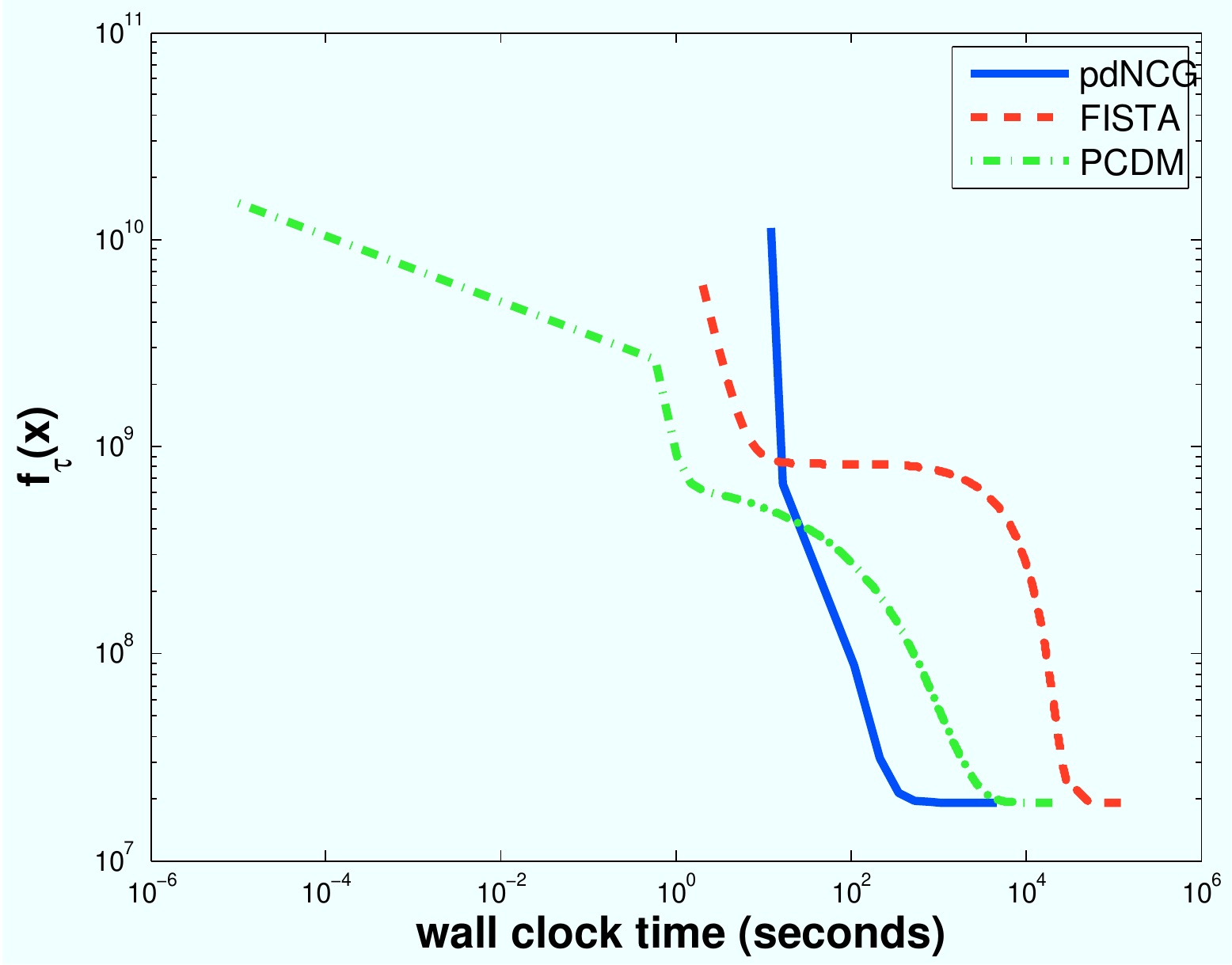}}
\caption{Performance of pdNCG, FISTA and PCDM on a synthetic sparse S-LS problem for increasing number of variables $m$. 
The axis are in log-scale. In this figure $f_\tau(x)$ denotes the objective value that was obtained by each solver}
\label{fig3}%
\end{figure}

\subsection{$\ell_1$-Regularized Logistic Regression}
In this subsection we compare pdNCG with FISTA and PCDM on six real world $\ell_1$-regularized LR problems. 
 For $\ell_1$-regularized LR the function $\varphi(x)$ in (\ref{prob1}) is set to
 \begin{equation*}
\varphi(x)=\sum_{i=1}^n\log(1+e^{-y_iw^\intercal x_i}),
\end{equation*} 
where $x_i\in\mathbb{R}^m$ $\forall i=1,2,\ldots,n$ are the training samples and $y_i\in\{-1,+1\}$ are the corresponding labels. Such problems are used for training a linear classifier $w\in\mathbb{R}^m$. 
Although in Linear Support Vector Machine (LSVM) literature there are more alternatives for function $\varphi(x)$, in this section we choose LR because it is second-order differentiable. For more details about support vector machine problems we refer the reader to \cite{yuanho}.

We present six $\ell_1$-regularized LR problems that are of large scale, sparse and partially or highly separable.
Exact information for these problems is given in Table \ref{LRprobs}. 
\begin{table}[t]
	\centering
	\caption{Properties of six $\ell_1$-regularized LR problems, which are used as benchmarks in this paper. The second and third columns show the number of training samples and features, respectively. 
	The fourth and fifth columns show the sparsity of matrix $X$ and the degree of partial separability $\beta_{LR}$ in \eqref{bd84}, respectively.
	The last column is the $\tau$ found using fivefold cross-validation}
	\begin{tabular}{lccccc}
		\textbf{Problem}& \textbf{$\boldsymbol n$}	& \textbf{$\boldsymbol m$}		& \textbf{$\mathbf{ nnz(X)/(mn)}$} & \textbf{$\boldsymbol \beta_{LR}$} &  $\boldsymbol \tau$\\
		\cite{realsim} real-sim		& $72,309$	& $20,958$	& $2.40e$-$02$ & $3484$   & $4.00e$-$02$\\
		\cite{rcv1}	 rcv1 & $20,242$     & $47,236$	& $1.60e$-$02$ & $980$ &$4.00e$-$02$ \\
		\cite{news20} news20	& $19,996$     & $1,355,191$	& $3.35e$-$04$ & $16423$ &$4.00e$-$02$ \\
		\cite{kdda} kdd (algebra) & $8,407,752$     & $20,216,830$	& $1.79e$-$06$ & $85$ &$2.00e$-$00$ \\
		\cite{kdda} kdd (br. to alg.) & $19,264,097$     & $29,890,095$	& $9.83e$-$07$ & $75$ &$2.00e$-$00$ \\
		\cite{webspam} webspam	& $350,000$     & $16,609,143$	& $2.24e$-$04$ & $46991$ &$4.00e$-$02$ 
	\end{tabular}
	\label{LRprobs}
\end{table}
In this table, matrix $X\in\mathbb{R}^{n\times m}$ has the training samples in its rows, the fourth column shows the sparsity of matrix $X$, where $nnz(X)$ is the number of non-zero components in $X$. The fifth column shows the degree of partial separability, which is defined as
\begin{equation}\label{bd84}
\beta_{LR} := \max_{j\in[1,2,\ldots,n]} \|X_j\|_0,
\end{equation}
where $X_j$ is the $j^{th}$ row of matrix $X$. 
The last column shows the $\tau$ that gave the classification with the highest accuracy after performing a fivefold cross validation over various $\tau$ values,
as proposed in \cite{svmguid}. The calculated values $\tau$ resulted for all problems in more than $90\%$ classification accuracy.
All problems in Table \ref{LRprobs} can be downloaded from the collection of LSVM problems in \cite{CC01a}.
Notice that for most of the problems in Table \ref{LRprobs}, $m < n$, which means that the problems are not strongly-convex everywhere 
as assumed in \eqref{bd15}. However, the problems in Table \ref{LRprobs} have a unique solution, which implies that they are strongly-convex 
locally to the optimal solution. We choose to solve these instances because the problems for which $m > n$ in collection \cite{CC01a} are small
scale, hence, possible numerical experiments might not provide significant insight into the behaviour of the methods. It is important to mention that 
all implementations of the compared methods can handle such cases without any modification.

The results of the comparison among the solvers pdNCG, FISTA and PCDM are shown in Figure \ref{fig4}. Notice in Subfigure \ref{fig4d} that for pdNCG 
the objective function $f_\tau(x)$ seems not to decrease always monotonically. This behaviour might have occurred because
backtracking line-search can terminate before the condition in Step $4$ of pdNCG is satisfied, if the maximum number of backtracking iterations 
is exceeded. 
%

\begin{figure}[htp]
  \centering

  \subfloat[real-sim]{\label{fig4a}\includegraphics[scale=0.37]{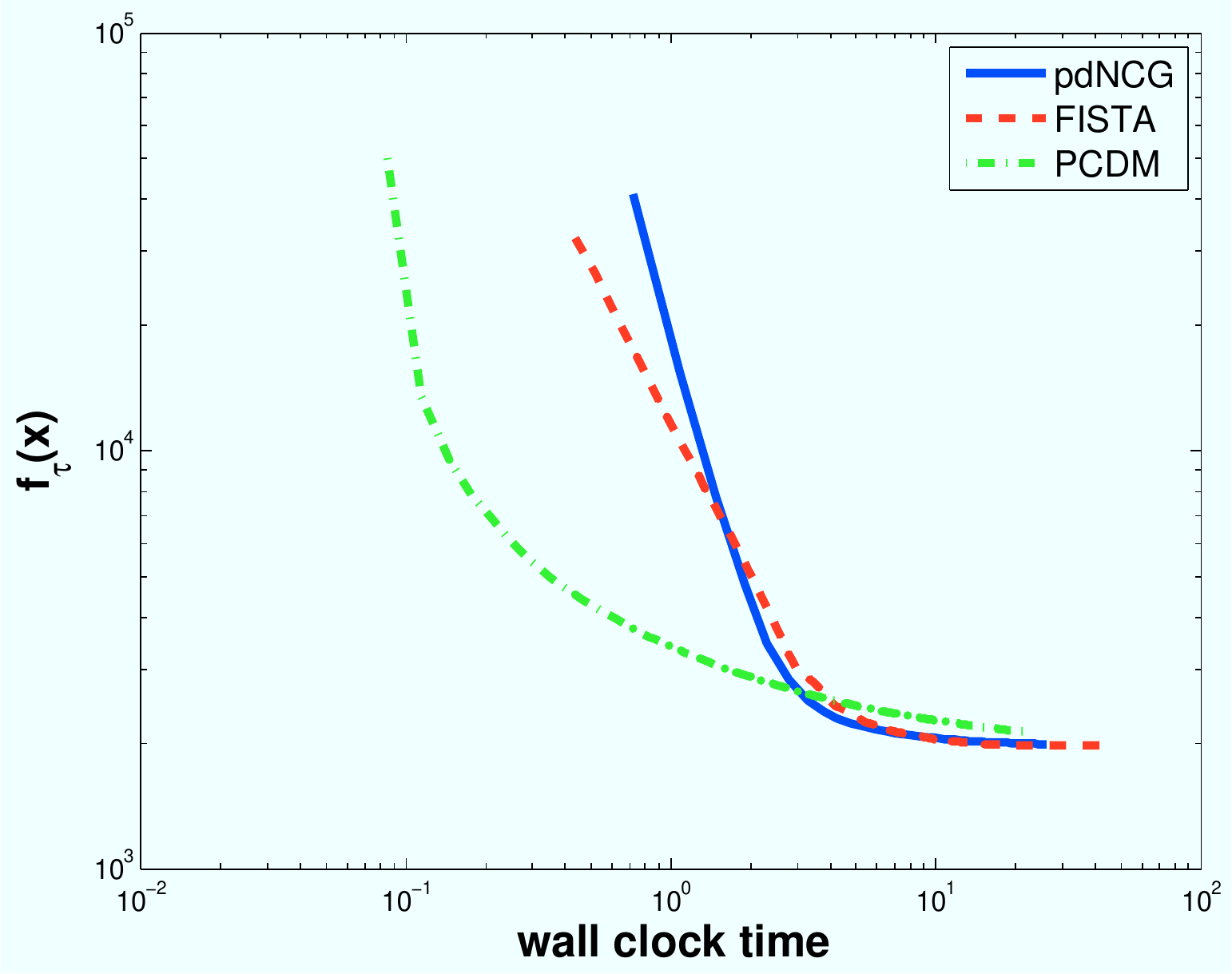}}
  \subfloat[rcv1]{\label{fig4b}\includegraphics[scale=0.37]{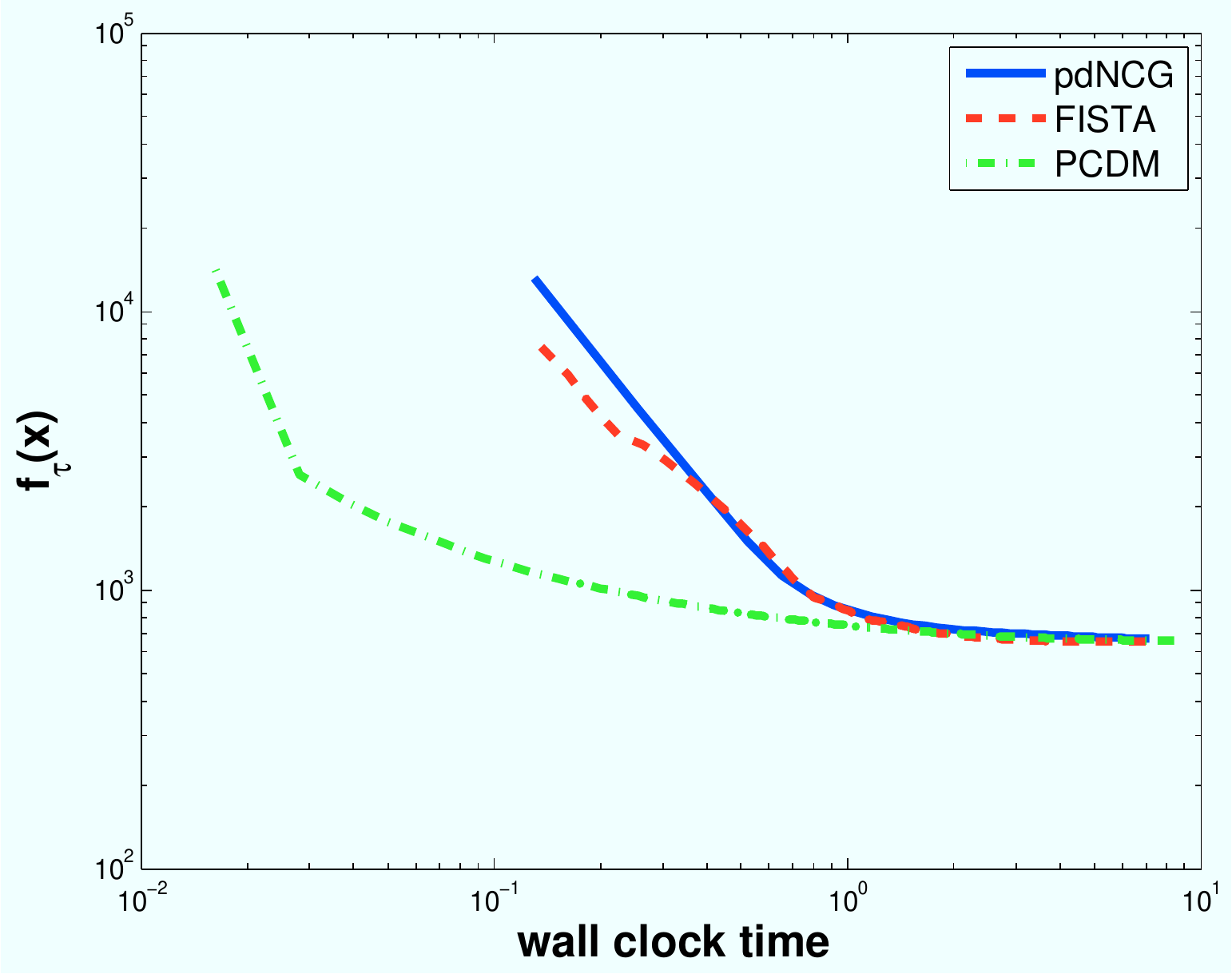}}
  \\
  \subfloat[news20]{\label{fig4c}\includegraphics[scale=0.37]{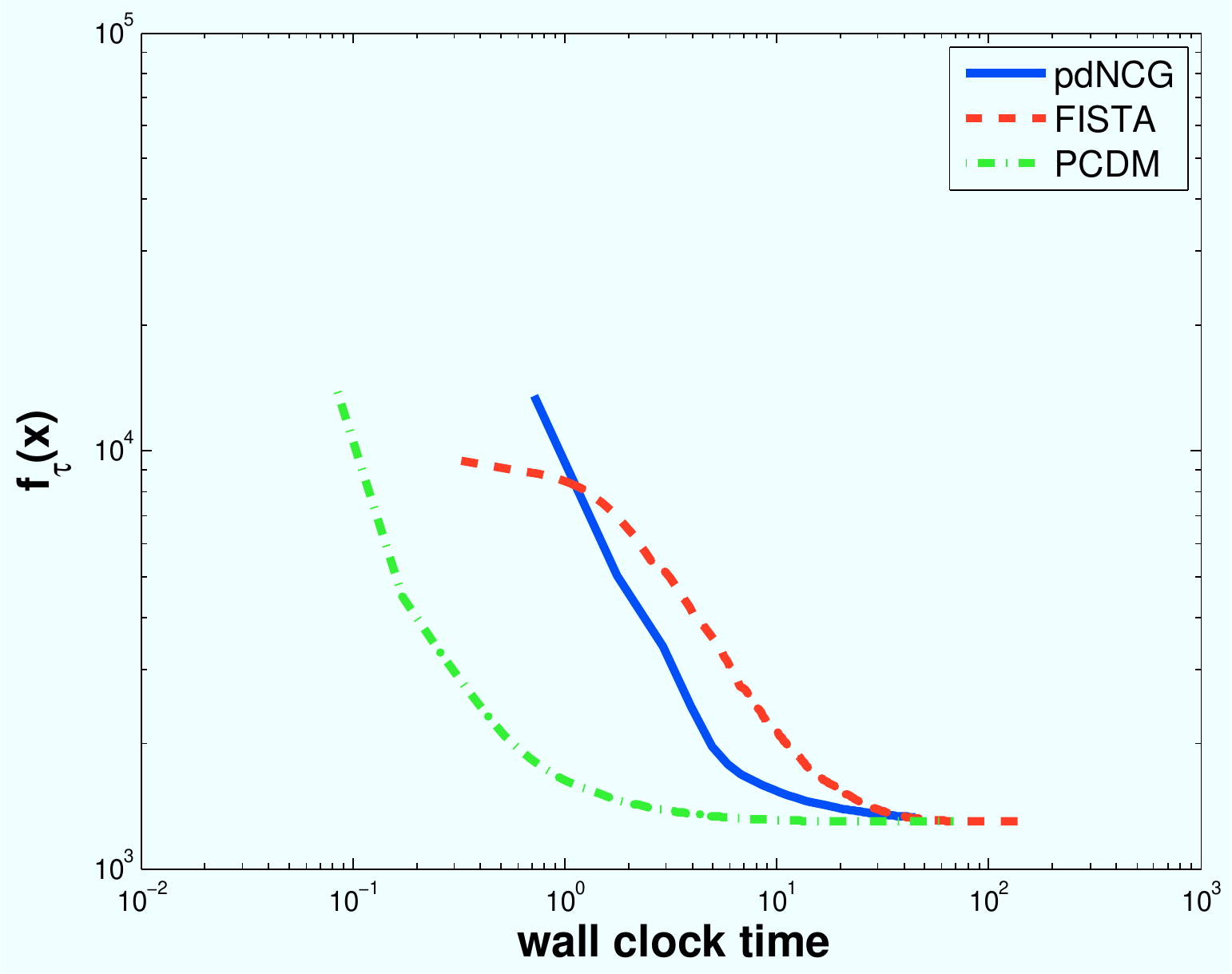}}
  \subfloat[kdd (algebra)]{\label{fig4d}\includegraphics[scale=0.37]{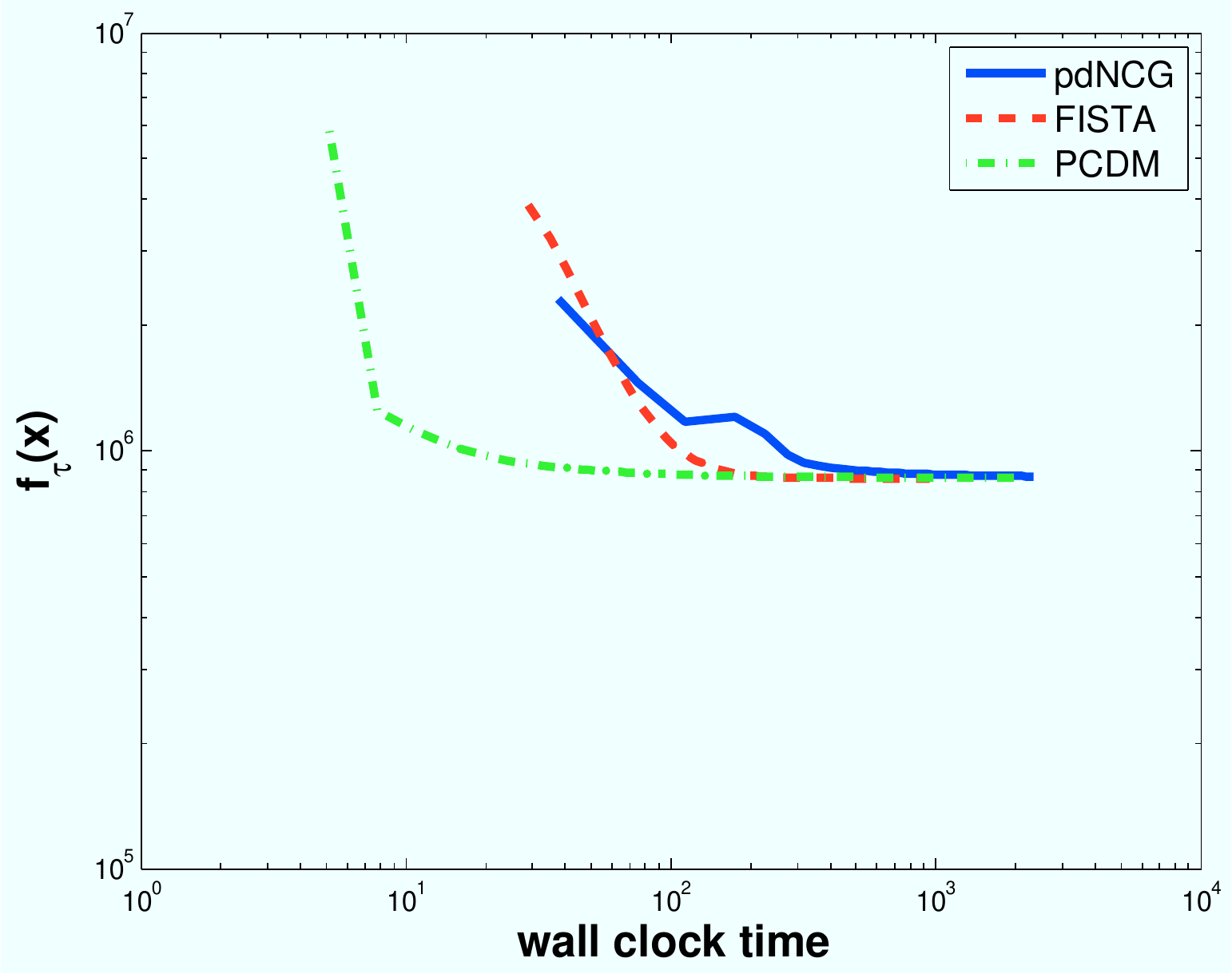}}
    \\
  \subfloat[kdd (bridge to algebra)]{\label{fig4e}\includegraphics[scale=0.37]{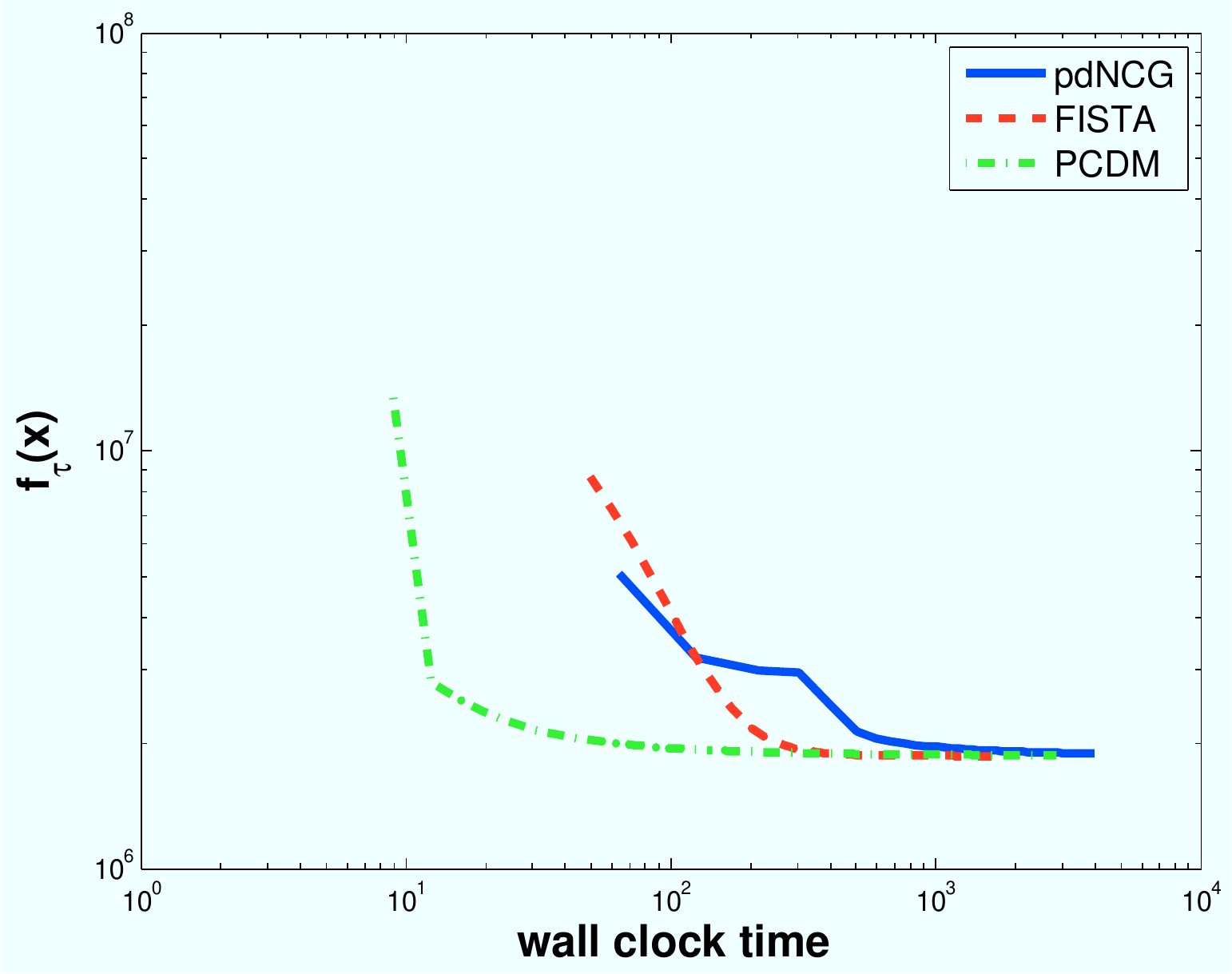}}
  \subfloat[webspam]{\label{fig4f}\includegraphics[scale=0.37]{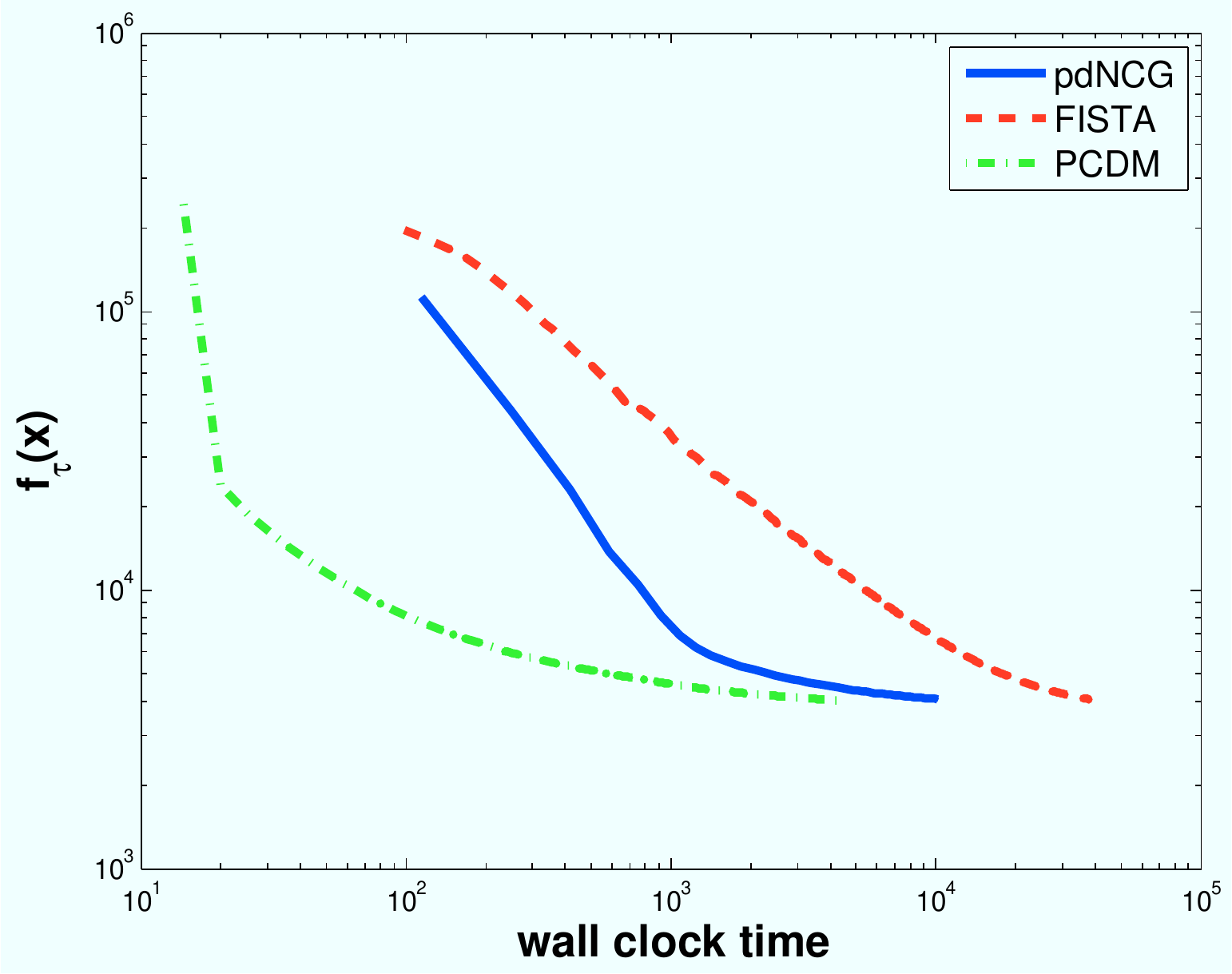}}
\caption{Comparison on $\ell_1$-regularized LR problems for pdNCG, FISTA and PCDM.}
\label{fig4}
\end{figure}

\section{Conlcusion}\label{sec:conclusion} 
In this paper we have studied an inexpensive but still robust primal-dual Newton-CG (pdNCG) method. 
The proposed method is developed for the solution of $\ell_1$-regularized problems, which might display
some degree of ill-conditioning; that is, display noticeable differences of the magnitude of eigenvalues. 
For such problems it is crucial that the methods capture information from the second-order derivative.
We have given synthetic sparse least-squares examples 
and six real world machine learning problems that satisfy the previous criteria and we provided computational evidence that on these problems the proposed method is efficient. 
An implementation of pdNCG and scripts that reproduce the numerical experiments can be downloaded from
\centerline{\url{http://www.maths.ed.ac.uk/ERGO/pdNCG/}.}

Finally, we have shown that by using the property of CG
described in Lemma \ref{lem:15}, the convergence analysis of pdNCG can be performed in a variable metric, which is defined based on approximate second-order 
derivatives. The variable metric opens the door for a tight convergence analysis of pdNCG, which includes global and local convergence rates, explicit definition of fast local convergence region
and worst-case iteration complexity.
 
\bibliographystyle{plain}
\bibliography{KFandJG.bib}

\end{document}